\documentclass[11pt,fleqn]{amsart} % \documentclass[11pt]{article}

\usepackage[usenames,dvipsnames]{xcolor}

\usepackage{amsthm}
\usepackage{amsfonts}
\usepackage[english]{babel}
\usepackage[usenames]{xcolor}
\usepackage{graphicx}
\usepackage{soul}
\usepackage{stfloats}
\usepackage{morefloats}
\usepackage{cite}
\usepackage{lscape}
\usepackage{epstopdf}
\usepackage{braket}
\usepackage[lite]{amsrefs}
\usepackage{mathbbol}
\usepackage{tikz}
\usepackage{tikz-cd}
\usepackage{mathtools}

\usepackage{amsmath,amstext,amsopn,amsfonts,eucal,amssymb}
\usepackage{graphicx,wrapfig,url}

\newcommand{\B}{\mathbb{B}}
\newcommand\N{{\mathbb N}}
\newcommand\Z{{\mathbb Z}}

\newcommand\C{{\mathbb C}}

\newcommand\rank{\mathop{\text{rank}}}

\newtheorem{theorem}{Theorem}[section]
\newtheorem{corollary}[theorem]{Corollary}
\newtheorem{lemma}[theorem]{Lemma}
\newtheorem{proposition}[theorem]{Proposition}
\newtheorem{definition}[theorem]{Definition}

\newtheorem{example}[theorem]{Example}

\newtheorem{remark}[theorem]{Remark}

\newcommand{\spec}{{\rm spec}}

\usepackage{soul}
\begin{document}
\dedicatory{Dedicated to the memory of Professor J. Scott Carter with deepest gratitude }

\title{Surface Braid Invariants from Yang-Baxter Operators
 and Their Cohomology   } 

\author{Masahico Saito} 
\address{Department of Mathematics and Statistics, 
	University of South Florida, Tampa, FL 33620} 
\email{saito@usf.edu} 

\author{Emanuele Zappala} 
\address{Department of Mathematics and Statistics, Idaho State University\\
	Physical Science Complex |  921 S. 8th Ave., Stop 8085 | Pocatello, ID 83209} 
\email{emanuelezappala@isu.edu}

\maketitle

\begin{abstract}
Surface braids are a generalization of braids to surfaces in the 4-disk.  
They are described by planar graphs called charts, and  sequences of braid words called braid systems,  that represent monodromies of branch points in projections. 
Yang-Baxter operators (YBOs) have been used extensively in knot theory for constructions of Jones and other polynomials, defined by braid group representations. 
We propose to use YBOs for constructing surface braid invariants through braid systems. 
The intersections of eigenspaces are used for this construction. 
We further use 3-cocycles of the Yang-Baxter cohomology theories via charts, in an analogue of quandle cocycle invariants but more generally utilizing  tensor products of modules. 
An approach towards defining knotted surface  invariants through closures of surface braids is discussed.

\bigskip

\noindent
{\it Key words.}\/
Surface braid charts, braid systems,  eigenspaces, 
Yang-Baxter cohomology, generalized quandle invariants, 
quantum cocycle invariants.

\end{abstract}

\date{\empty}

\tableofcontents

\section{Introduction}

Braid theory has played a significant role in a wide range of areas in mathematics and theoretical physics.
In knot theory, the Jones polynomial~\cite{Jones} was discovered through braid group representations, and it later led to the area of quantum topology. A family of braid group representations has been used in physics, as solutions to the Yang-Baxter equation (YBE), that corresponds to a key braid relation, as well as Reidemeister type III move for knot diagrams. Applications of braid theory to knot theory 
 are based on theorems by Alexander and Markov. The former states that any link can be represented by a closed braid, and the latter provides the equivalence among  
 braids that results in equivalence of links. % equivalence.

Since the discovery of the Jones polynomial, its extensions to higher dimensions have been in question. 
This question led  to categorifications~\cite{Kh}  of Jones and other polynomials.
An invariant in a state sum form, called the quandle cocycle invariant for knots and knotted surfaces, was also motivated by this question, and developed earlier, though the states are restricted by quandle colorings. For computations of the cocycle invariant, the surface braid theory was employed.
It is also natural for this question  to ask if Yang-Baxter operators (YBOs), i.e. solutions of the YBE, can be employed in higher dimensions. Exploring this question is the purpose of this paper.

The surface braids were 
 extensively developed by Kamada~\cite{Kamada:SB}. 
Surface braids are properly embedded compact orientable surfaces in the 4-disk $D^2 \times D^2$ 
such that the restriction of the projection $D^2 \times D^2 \rightarrow D^2$ is a simple branched cover (see next section). Analogues of theorems of Alexander and Markov are established, and their applications to knotted surfaces have been given. As in the classical case, surface braids are of interest in their own right as well.

Surface braids are represented by planar graphs called charts, corresponding to self intersections of projections to 3-space (details in the next section). The image of branch points, called black vertices in the disk, have monodromies expressed by classical braid words. This correspondence gives a sequence of braid words called a braid system. There is a known  equivalence defined on braid systems, called slide equivalence, that realizes the equivalence of surface braids. 
	Therefore, the problem of classifying surface braids is closely related to the classification of 
	braid word sequences 
	 up to slide equivalence, which is undecidable \cite{Lib-Tei}. 
	 As a consequence, the construction of invariants of braid systems up to slide equivalence for the study of surface braids is a relevant and important problem. 
Since surface braids are represented by sequences of braids, taking braid representations induced by YBOs, 
 that are used in defining the Jones and other polynomials, would be a natural approach.  
This is the first method we propose in this paper. The second method is to employ Yang-Baxter  cohomology theory~\cite{Eisermann,SZ-YBH} applied to 
 white vertices, that correspond to triple points in projections.
Specifically,  3-cocycles of the Yang-Baxter cohomology theory 
are assigned to triple points, motivated by quandle 3-cocycle invariants of knotted surfaces. This construction is performed on tensor products of modules, instead of discrete structures, and 
 the resulting invariant   is called  a quantum cocycle invariant. A similar  idea of constructing quantum cocycle invariants for classical knots  using Yang-Baxter 2-cocycles  is found in \cite{SZ-QC}.

\begin{figure}[htb]
\begin{center}
\includegraphics[width=4.5in]{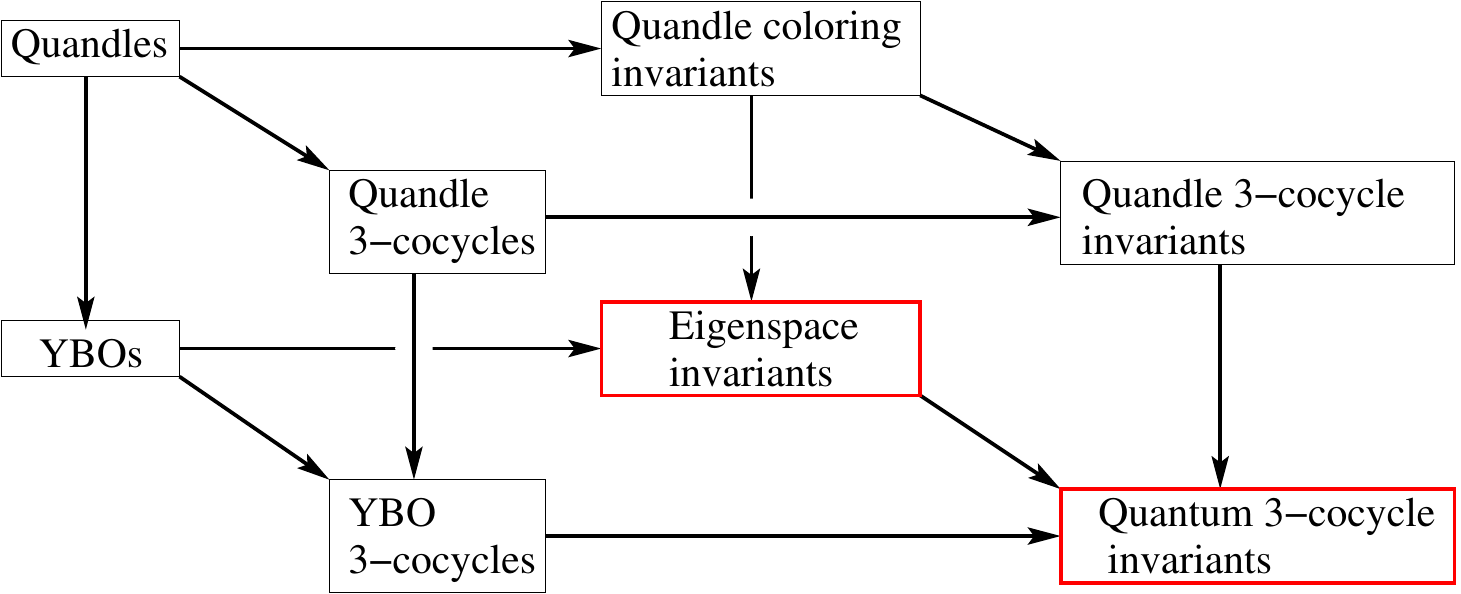}
\end{center}
\caption{Conceptual diagram for constructions of  invariants}
\label{scheme3cocy}
\end{figure}

More specifically, relations among these invariants are schematically depicted in Figure~\ref{scheme3cocy}.
From the left of the figure, YBOs can be constructed from quandles, and there are cohomology theories for quandles and YBOs,
giving 3-cocycles that are used for invariants. Quandles give rise to coloring invariants, and the corresponding invariants for YBOs are eigenspace invariants we construct in this paper. Quandle colorings, together with 3-cocycles provide quandle 3-cocycle invariants for knotted surfaces. The corresponding 3-cocycle invariants  
  for YB cohomology 
are defined for surface braids in this paper. Since YBOs provide classical quantum knot invariants, we call  invariants from YB 3-cocycles the quantum 3-cocycle invariants.

The paper is organized as follows. 
Concepts and notations used in this paper are reviewed in Section~\ref{sec:prelim}.
The main object of study, surface braids is reviewed, and the tools of study, such as  charts, Hurwitz arc systems, braid systems are established. 
 The braid systems correspond to branch points of projection of surface braids, and represented by 1-valent vertex 
called black vertex of charts. 
The main algebraic tool for the study, namely Yang-Baxter operators (YBOs), is also presented. The YBOs have been extensively used in knot theory in dimension 3, and its use in dimension 4 is the main novel aspect of the paper.

In Section~\ref{sec:eigen}, eigenspaces of YBOs are  examined, and their intersections with respect to braid systems
are defined.
 Two types of eigenspaces are presented for the purpose of ease of computations, and their relations are discussed.
The invariance of the intersections is proved in Section~\ref{sec:eigeninvariance}.
This is done by checking the equivalence relation among braid systems called the slide equivalence.

The motivation for this definition of taking the intersections of eigenspaces stems from quandle coloring numbers, that are defined for knotted surfaces as well as classical knots.
Quandle invariants have been extensively studied in both dimensions.
In Section~\ref{sec:quandle_col} the relation between the eigenspace invariant and the quandle coloring invariant is presented. Explicit examples from quandles are discussed.
This shows that the eigenspace invariant can be seen as an approach to define quantum version of quandle invariants.

The YBOs are extensively used in classical knots, and the most fundamental example of YBOs is that of the Kauffman bracket, that defines the Jones polynomial. In Section~\ref{sec:bracket}, the eigenspaces of  such YBO are examined, 
and the intersection invariant is explicitly computed for surface braids corresponding to a family of  spun torus knots.
It is shown that the invariant distinguishes infinitely many of them.

%The braid systems correspond to brach points of projection of surface braids, and represented by 1-valent vertex  called black vertex of charts. 
 In addition to black vertices, there is another type of  important vertices of charts, called a white vertex, that corresponds to triple points in projections. Triple points are used in quandle 3-cocycle invariants of knotted surfaces. 
In Section~\ref{sec:q3} 
we turn our attention to white vertices, and define a quantum version of 3-cocycle invariant, using 3-cocycles of the Yang-Baxter cohomology. 
Well-definedness is proved by chart moves, and relation to quandle invariants are examined.

The Jones polynomial was originally defined from braid group representation and then  by applying theorems of Alexander and Markov. These theorems are known to exist in dimension 4. 
Thus knotted surface invariants can be  defined from surface braid invariants if they are shown to be invariant under conjugation and stabilization. We examine approaches to derive such invariants from the YBO invariants discussed in this paper.

\section{Preliminary}\label{sec:prelim}

A {\it surface braid} of index $n$  is a properly embedded surface  $S$
 in the product of 2-disks $D_1 \times D_2$
such that $\pi: D_1 \times D_2 \rightarrow D_2$ restricts to $S\subset D_1 \times D_2$ as a simple covering map, and $\partial S \subset D_1 \times \partial D_2$ is a trivial braid $X_n \times \partial D_2$, where
$X_n$ is a set of  fixed points in $D_1$. A  simple covering means $\# ( \pi^{-1} (y) )=n$ or $n-1$. 
The singular points correspond to isolated branch points~\cite{Kamada:SB}.

\begin{figure}[htb]
\begin{center}
\includegraphics[width=2.5in]{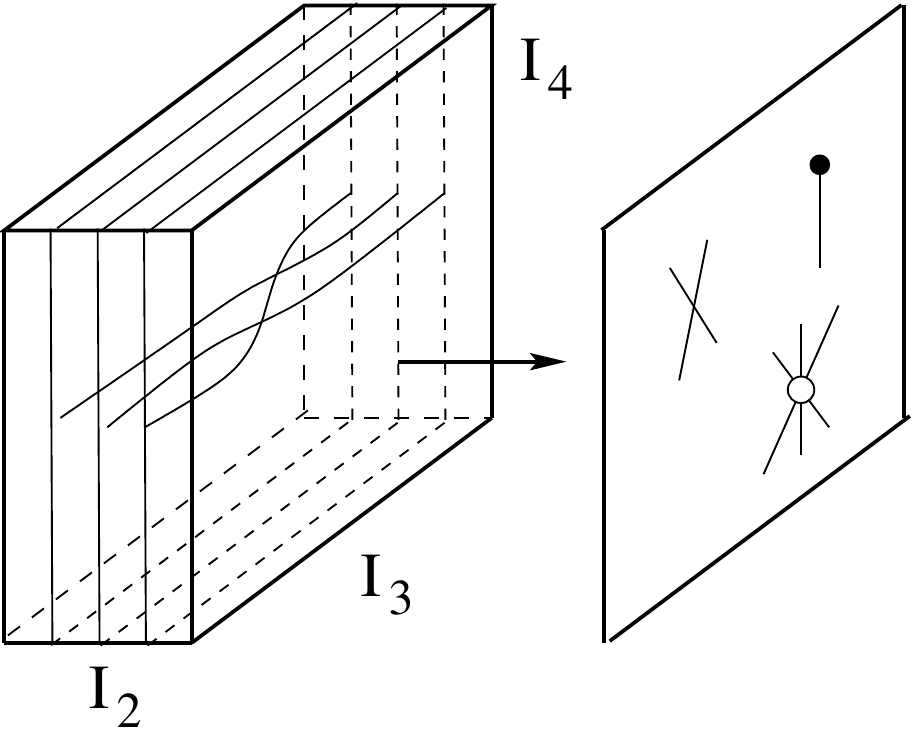}
\end{center}
\caption{Projections of a surface braid}
\label{box}
\end{figure}

Writing $D_1=I_1 \times I_2$ and $D_2=I_3 \times I_4$, 
We may assume that $\pi$ factors through $\pi': D_1 \times D_2 \rightarrow I_2 \times D_2$
and $\pi'$ is a generic projection, so that $\pi'(S)$ consists of embedded points, double point curves, isolated branch and triple points~\cite{Kamada:SB}.
Let  $\pi'': I_2 \times D_2 \rightarrow D_2$ be the projection, so that $\pi =  \pi '' \circ \pi'$. 
The image of the singular points under $\pi''$ 
is called a {\it chart} $\Gamma(S)$ of a surface braid $S$, which is a planar graph having vertices 
of valency 1 (the image of a branch point, called a black vertex), 2 (called a crossing) and 6 (the image of a triple point, called a white vertex).

In Figure~\ref{box} the image $\pi'(S)$ in $I_2 \times I_3 \times I_4$ and the projection $  I_2 \times D_2 \rightarrow  D_2=I_3 \times I_4$ is schematically depicted. Each cross section $ I_2 \times  I_3 \times \{ t \} \cap \pi'(S)$ is identified  wth a classical braid diagram except singular values  of $t$.
Thus the edges of a chart corresponds to standard braid generators $\sigma_j$, and the index $j$ is assigned to each edge.
 The edges are oriented to distinguish  $\sigma_j$ and $\sigma_j^{-1}$, see below.

On the right in Figure~\ref{box},  building blocks of images of singular points are depicted in $D_2$.
More details of neighborhoods are depicted in Figure~\ref{sbraids}, for  a brach point (A), a crossing (B), and a triple point (C), with images in the projection at the right.

\begin{figure}[htb]
\begin{center}
\includegraphics[width=4.5in]{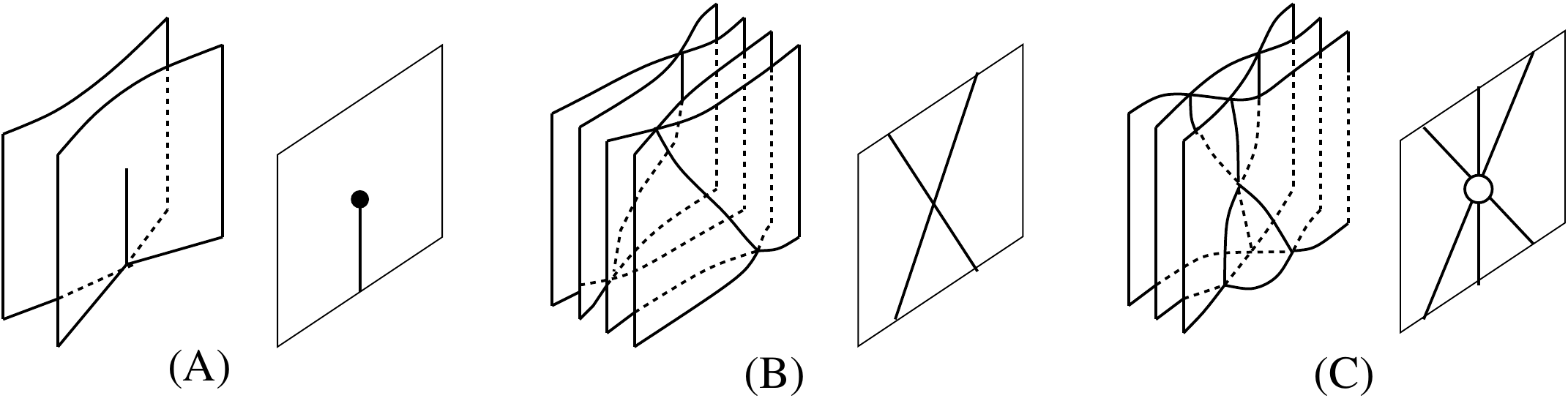}
\end{center}
\caption{Building blocks of a surface braid}
\label{sbraids}
\end{figure}

 Let $\Sigma(S)=(y_1, \ldots, y_m)$ be the set of black vertices of a chart $\Gamma(S)$ of a surface braid $S$.
 Let $a_i$ be an arc from $y_i$ to the base point $q_0$ that are in general position with $\Gamma(S)$. 
 The set of arcs ${\mathcal A}=(a_1, \ldots, a_m)$ is called  a {\it (Hurwitz) arc system} \cite{Kamada:SB} (Section 2.3). See Figure~\ref{Hurwitz} (A).

If a given arc that is in general position with $\Gamma(S)$ crosses an oriented edge labeled by $j$ of $\Gamma(S)$ from the left to right with regard to the orientation of the edge, 
then the crossing corresponds to a standard generator $\sigma_j$, and $\sigma_j^{-1}$ if it crosses from right to left
\cite{Kamada:SB}. %(Figure 18.14). 
See Figure~\ref{Hurwitz} (B).

For a Hurwitz arc $a_i$, we associate a {\it braid monodromy} as follows. 
One goes along $a_i$ from $q_0$ (in opposite direction)
 to nearby $y_i$. Read the sequence of braid generators or their inverses as it crosses the edges of $\Gamma(S)$
 as in Figure~\ref{Hurwitz} (B).
 The sequence is written from the right; if one encounters $\sigma_1$ and $\sigma_2$ in this order, the sequence records $\sigma_2 \sigma_1$. Let $w_i$ be the braid word thus constructed from $q_0$ to nearby $y_i$.
 
Around a  black vertex $y_i$,  a loop goes counterclockwise.
When 
 the orientation of an edge of $\Gamma(S)$ is incoming to $y_{i}$, the corresponding loop around $y_i$
  is $\sigma_{k_i}$, and an outgoing edge corresponds to $\sigma_{k_i}^{-1}$. 
    Then one goes back along $a_i$, which reads $w_i^{-1}$, and obtains the braid word
    $b_i := w_i^{-1} \sigma_{k_i}^{\epsilon( i )} w_i$. 
See Figure~\ref{Hurwitz} (C).
The sequence $\vec{b}=(b_i, \ldots, b_m)$ is called the {\it braid system} of $S$.

\begin{figure}[htb]
\begin{center}
\includegraphics[width=4.2in]{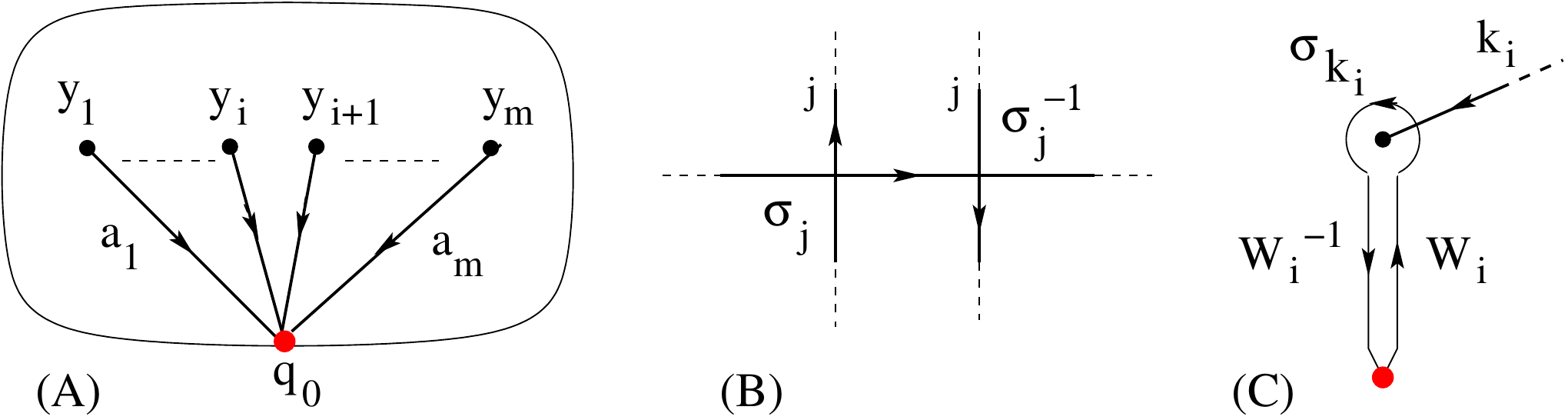}
\end{center}
\caption{Hurwitz arc system and monodromy}
\label{Hurwitz}
\end{figure}

Below we consider braid representations defined from the Yang-Baxter operators (YBOs).
Let $V$ be a $\mathbb k$-module for a unital ring $\mathbb k$. 
A YBO is an automorphism $R: V \otimes V \rightarrow V \otimes V$ satisfying the Yang-Baxter equation (YBE)
$$ (R \otimes \mathbb 1)(\mathbb 1 \otimes R)  (R \otimes \mathbb 1)
= (\mathbb 1 \otimes R)  (R \otimes \mathbb 1)(\mathbb 1 \otimes R) , $$
which corresponds to the braid relation $\sigma_i \sigma_{i+1} \sigma_i = \sigma_{i+1} \sigma_i \sigma_{i+1} $ and the Reidemeister type III move in diagrams.
 For a   set $X$,  a map $R: X \times X \rightarrow X \times X$ is called a set-theoretic Yang-Baxter operator (SYBO) if $R$ satisfies a similar equation with $\otimes$ replaced by $\times$, called the set-theoretic Yang-Baxter equation (SYBE).

For a given YBO $R$ on $V$ we consider the representation $\rho: \mathbb B_n \rightarrow {\rm Hom}(V^{ \otimes n },  V^{ \otimes n } ) $
defined for standard generators by 
$$\sigma_j \mapsto R_{(j)}:= {\mathbb 1}^{\otimes (j-1)} \otimes R \otimes {\mathbb 1}^{\otimes (n-j-1)} .$$
For the inverse $R^{-1}$, we denote  $R_{(j)}^{-1}= {\mathbb 1}^{\otimes (j-1)} \otimes R^{-1} \otimes {\mathbb 1}^{\otimes (n-j-1)} .$
To avoid confusion, throughout the article we mostly use $i$ for subscripts of braid system entries $b_i$, with total number $m$, and $j$ for indices of braid generators $\sigma_j$, with the braid index $n$.

When we consider linear operators  corresponding to a braid word under a representation
$\rho: \B_n \rightarrow {\rm Hom}(V,V)$ for a module $V$, we follow regular matrix convention of left action.
Since the braid words in a braid system are written from the right to left as a sequence of braid generators, 
if the matrices corresponding to $w_i$ and $\sigma_j$ are $W_i$ and $R_{(j)}$, respectively,  then for $b_i= w_i^{-1} \sigma_{k_i}^{\epsilon(i)} w_i$
we have $\rho (b_i)=   W_i^{-1}  \ {R_{(k_i)}}^{\epsilon(i)} \  W_i $
in the same order.

\section{Eigenspaces of braid representations by YBOs}\label{sec:eigen}

Let $\Lambda$ be the set of eigenvalues of a YBO $R$. 
Denote the eigenspace of $R$ for $\lambda \in \Lambda$ by $E_\lambda$.
Recall that the set of eigenvalues of $R^{-1}$ is in bijection with 
$\Lambda$ by $\lambda \leftrightarrow \lambda^{-1}$, so that we denote the set of eigenvalues of $R^{-1}$ by $\Lambda^{-1}$.
Furthermore, the eigenspace of $R^{-1}$ for $\lambda^{-1}$ is equal to $E_\lambda$.
 For $\lambda \in \Lambda$, we denote the eigenspace of $R^{-1}$ for $\lambda^{-1}$ by 
$E_\lambda^{-1}$ to avoid a confusion when $\lambda^{-1}$ is also an eigenvalue of $R$. 

For the $n$-braid group $\mathbb B_n$ and its representation $\rho:  \mathbb B_n \rightarrow {\rm Hom}(V^{ \otimes n },  V^{ \otimes n } ) $, we use the following notations. 
Denote by $E(R_{(j)}, \lambda)$ the eigenspace of $\rho(\sigma_j ) = R_{(j)}= {\mathbb 1}^{\otimes (j-1)} \otimes R \otimes {\mathbb 1}^{\otimes (n-j -1)} $.
 If $R$ has an eigenvalue $\lambda$ and $R^{-1}$ has the corresponding eigenvalue $\lambda^{-1}$, 
 we denote the eigenspace of $R_{(j)}^{-1}$  for $\lambda^{-1}$ by $E(R_{(j)}^{-1}, \lambda^{-1})$.

Let $\mathbb B_n$ denote the $n$-braid group.
Let $S$ be a surface braid with a chart $\Gamma$.
Let 
$$\vec{b}=
(b_1 , \ldots,  b_m)=(w_1^{-1} \sigma^{\epsilon_1}_{k_1} w_1, \ldots, w_m^{-1} \sigma^{\epsilon_n}_{k_m} w_m )$$
be a braid system of a surface braid $S$ with respect to a chart 
 $\Gamma (S)$ 
as in \cite{Kamada:SB} (Section 18.10).

Denote by $\rho(\vec{b})= (\rho (b_1), \ldots, \rho(b_m))$ and
%(\rho ( w_1^{-1} \sigma^{\epsilon_1}_{k_1} w_1) , \ldots, \rho( w_n^{-1} \sigma^{\epsilon_n}_{k_n} w_n)  )$.
set $B_i:= \rho ( b_i ) $ for $i=1, \ldots, m$.
If $b_i=w_i^{-1} \sigma_{k_i}^{\epsilon(i)} w_i$,  we set  $W_i:=\rho(w_i)$. 
For $b_i=w_i^{-1} \sigma_{k_i}^{\epsilon(i)} w_i$, we have 
$$B_i=\rho(b_i)=\rho(w_i^{-1}   \sigma_{k_i}^{\epsilon(i)} w_i)  = \rho(w_i^{-1} ) \rho (\sigma_{k_i}^{\epsilon(i)} ) \rho (w_i) = W_i^{-1}  \ {R_{(k_i)}}^{\epsilon(i)} \ W_i . $$

Denote by $E(B_i, \lambda^{\epsilon(i)} )$ the eigenspace of 
$B_i=W_i^{-1} \ {R_{(k_i)}}^{\epsilon(i)} \ W_i$.
We stress that although $\lambda^{-1}$ appears in the expression, they appear only when the central term is $R_{k_i}^{-1}$ whose eigenvalue is $\lambda^{-1}$, that corresponds to the specified eigenvalue $\lambda$ of $R$. 
In other words, throughout the discussion an eigenvalue $\lambda$ of $R$ is fixed.

\begin{lemma}\label{lem:supset}
If a YBO $R$ has an eigenvalue $\lambda$, then $R_{(j)}= {\mathbb 1}^{\otimes (j-1)} \otimes R \otimes {\mathbb 1}^{\otimes (n-j -1)} $ on $V^{\otimes n}$ 
has an eigenvalue $\lambda$ for all $j=1, \ldots, n$.
Moreover, if $E_\lambda$ is the eigenspace of $\lambda$ for $R$, 
the eigenspace  $E(R_{(j)}, \lambda)$ of $\lambda$ for $R_{(j)}$ contains
\begin{equation}
E_{(j, \lambda)} := V^{\otimes (j-1)} \otimes E_\lambda  \otimes V^{\otimes (n-j-1)}. \label{eqn:Elambda}
\end{equation}
Similarly, for the inverse  the eigenspace $E(R_{(j)}^{-1}, \lambda^{-1})$ of $R_{(j)}^{-1}$  contains 
\begin{equation}
E^{-1}_{(j, \lambda)} := V^{\otimes (j-1)} \otimes E_\lambda^{-1}  \otimes V^{\otimes (n-j-1)}. \label{eqn:Einv}
\end{equation}
That is, $E(R_{(j)}, \lambda ) \supset  E_{(j, \lambda)}$ and 
$E(R_{(j)}^{-1}, \lambda^{-1} ) \supset E^{-1}_{(j, \lambda)}$. 

Moreover, if $V$ is a flat $\mathbb k$-module, and $\lambda$ is an eigenvalue of $R$, then $E(R_{(j)}, \lambda ) =  E_{(j, \lambda)}$ and $E(R_{(j)}^{-1}, \lambda^{-1} ) = E^{-1}_{(j, \lambda)}$. In this situation, if $R$ is diagonalizable, the follwing eigenspace decomposition %would hold:
holds: $V^{\otimes n} = \bigoplus_{\lambda \in \spec(R)} E_{(j,\lambda)}$, where $\spec(R)$ indicates the spectrum of $R$. 
\end{lemma}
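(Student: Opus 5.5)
The containment is a direct check on pure tensors followed by linearity. Take $x = u \otimes e \otimes v$ with $u \in V^{\otimes (j-1)}$, $e \in E_\lambda$ and $v \in V^{\otimes(n-j-1)}$. Then
$$R_{(j)}(u\otimes e\otimes v) = u \otimes R(e) \otimes v = \lambda\, u\otimes e\otimes v .$$
By linearity, $R_{(j)}$ acts as $\lambda$ on the image of $V^{\otimes(j-1)}\otimes E_\lambda\otimes V^{\otimes(n-j-1)}$ in $V^{\otimes n}$, which is what $E_{(j,\lambda)}$ denotes. So $E_{(j,\lambda)} \subset E(R_{(j)},\lambda)$.

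For the existence of the eigenvalue we need $E_{(j,\lambda)}\neq 0$. I would get this by choosing a nonzero $e\in E_\lambda$ and tensoring with vectors that keep it nonzero; for instance, $V$ free, or more generally flat with $V^{\otimes k}\neq 0$, suffices. I will note this implicit hypothesis.

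The inverse statement is identical with $R^{-1}$ in place of $R$. We have $R^{-1}e=\lambda^{-1}e$ for $e\in E_\lambda$, since $\lambda$ is invertible because $R$ is an automorphism. Also $(R_{(j)})^{-1} = \mathbb 1\otimes R^{-1}\otimes \mathbb 1$.

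For equality under flatness, I would write $E_\lambda=\ker(R-\lambda\mathbb 1)$, which gives the exact sequence
$$0\to E_\lambda \to V\otimes V \xrightarrow{R-\lambda} V\otimes V .$$
Tensoring on the left by the flat module $V^{\otimes(j-1)}$ and on the right by the flat module $V^{\otimes(n-j-1)}$ preserves exactness; tensor products of flat modules are flat. This gives
$$\ker(R_{(j)}-\lambda\mathbb 1) = V^{\otimes(j-1)}\otimes E_\lambda\otimes V^{\otimes(n-j-1)} ,$$
and flatness also makes this tensor product inject into $V^{\otimes n}$. Hence $E(R_{(j)},\lambda)=E_{(j,\lambda)}$, and the same argument works for $R^{-1}$ with $\lambda^{-1}$.

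For the decomposition, diagonalizability means $V\otimes V=\bigoplus_\lambda E_\lambda$. Tensor product distributes over finite direct sums, so
$$V^{\otimes n}=\bigoplus_\lambda V^{\otimes(j-1)}\otimes E_\lambda\otimes V^{\otimes(n-j-1)}=\bigoplus_\lambda E_{(j,\lambda)} .$$
This step needs no flatness beyond identifying the summands with submodules.

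The main obstacle is the bookkeeping in the flatness step: one must make sure that exactness of the kernel sequence is preserved on both sides, and that the eigenspace is genuinely a kernel over a general ring $\mathbb k$. I would handle this by applying the flat functors $V^{\otimes(j-1)}\otimes -$ and $-\otimes V^{\otimes(n-j-1)}$ one at a time.
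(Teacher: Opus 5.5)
Your proof is correct and is essentially the paper's argument. Like the paper, you use the pure-tensor computation for $E_{(j,\lambda)}\subset E(R_{(j)},\lambda)$, tensor the kernel sequence $0\to E_\lambda\to V\otimes V\xrightarrow{R-\lambda}V\otimes V$ with the flat factors to get equality, and distribute $\otimes$ over $\bigoplus_\lambda E_\lambda$ for the decomposition. Flagging the tacit hypotheses goes beyond the paper, but two of your side justifications are too quick over a general ring $\mathbb k$. First, an automorphism can have a non-unit eigenvalue (e.g.\ $R=2\cdot\mathbb 1$ on $V\otimes V$ for $V=\mathbb Z/3$ over $\mathbb k=\mathbb Z$), so invertibility of $\lambda$ is part of the statement's hypotheses rather than a consequence. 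Second, flatness with $V^{\otimes k}\neq 0$ does not by itself keep $V\otimes E$ nonzero for a nonzero $E\subset V\otimes V$, whereas faithful flatness (e.g.\ $V$ free of positive rank) does.
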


\begin{proof}
Let $v$ be an eigenvector of $R$ for $\lambda$, so that $R v=\lambda v$.
Then for any $u \in V^{\otimes (j-1)} $ and $w \in V^{\otimes (n-j-1)} $,
we have $R_{(j)} (u \otimes v \otimes w)=\lambda u \otimes v \otimes w$, hence $\lambda$ is an eigenvalue of 
$R_{(j)}$ with an eigenvector $u \otimes v \otimes w$.
We have that $u \otimes v \otimes w \in E_{(j, \lambda)}$, hence the statement follows.
The case for the inverse is similar.

Let $R$ have an  eigenvalue $\lambda$ over a flat $\mathbb k$-module $V$. The tensor product $V\otimes V$ is flat over $\mathbb k$ as well. It follows that the right and left tensor product functors by $V$ are exact. The eigenspace $E_\lambda$ fits into the shorth exact sequence 
$$
\begin{tikzcd}
	0\arrow[rr] && E_\lambda \arrow[rr] && V\otimes V\arrow[rr,"R-\lambda \mathbb 1^{\otimes 2}"] && V\otimes V. 
\end{tikzcd}
$$
Tensoring on the left, and using the exactness of the tensor product functor, we obtain the short exact sequence
$$
\begin{tikzcd}
0\arrow[rr] && V \otimes E_\lambda \arrow[rr] && V\otimes V\otimes V\arrow[rr,"\mathbb 1\otimes R-\lambda \mathbb 1^{\otimes 3}"] && V \otimes V\otimes V, 
\end{tikzcd}
$$
which shows that $E(\mathbb 1\otimes R,\lambda) = \ker(\mathbb 1\otimes R-\lambda \mathbb 1^{\otimes 3}) = V\otimes \ker(R-\lambda \mathbb 1^{\otimes 2}) = V\otimes E_\lambda$. A similar procedure by tensoring on the right shows that $E(R\otimes \mathbb 1,\lambda) = E_\lambda\otimes V$. Iterating this process shows that more generally $E(R_{(j)},\lambda) = E_{(j,\lambda)}$. An analogous reasoning can be applied to the case of inverses. 

To prove that the decomposition in the special case of $R$ admitting a basis of eigenvectors, that is when $R$ is diagonalizable, observe that the previous part of the proof gives 
\begin{eqnarray*}
		V^{\otimes n} &=& V^{\otimes (j-1)} \otimes (\oplus_{\lambda \in \spec(R)} E_\lambda) \otimes V^{\otimes (n-j-1)}\\
		&=& \bigoplus_{\lambda \in \spec(R)} V^{\otimes (j-1)} \otimes E_\lambda \otimes V^{\otimes (n-j-1)}\\
		&=& \bigoplus_{\lambda \in \spec(R)} E_{(j,\lambda)}
\end{eqnarray*}
%}
as desired.
\end{proof}

For all $i$, recall that $B_i$ is written as 
$W_i^{-1} R_{(k_i)} W_i$, where $W_i=\rho (w_i)$. 
Recall also that  $E(R_{(j)}, \lambda)$  and $E(B_i, \lambda)$ 
 denote the eigenspaces of $R_{(j)}$ and $B_i$ for eigenvalue $\lambda$, respectively.

\begin{lemma}\label{lem:conjeigen}
Assume that  $\lambda$ is an eigenvalue of $R$ with eigenspace $E_\lambda$.
Let  $E(R_{(j)}, \lambda)$
and $E(B_i, \lambda)$ be as above.
Then $\lambda$ is an eigenvalue of $B_i$ with eigenspace $E(B_i, \lambda) = W_i^{ -1} ( E(R_{(k_i)}, \lambda ) )$.

Similarly, we have $E_{(B_i, \lambda)} = W_i^{-1} ( E_{(R_{(k_i)}, \lambda )} )$.
\end{lemma}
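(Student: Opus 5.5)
The plan is to use the fact that $B_i = W_i^{-1} R_{(k_i)}^{\epsilon(i)} W_i$ is conjugate to $R_{(k_i)}^{\epsilon(i)}$, and that conjugation by an invertible operator carries eigenspaces to eigenspaces. First, $W_i=\rho(w_i)$ is invertible on $V^{\otimes n}$. Indeed, $w_i$ is a product of generators $\sigma_j^{\pm 1}$, and $\rho$ sends each of these to $R_{(j)}^{\pm1}$, which are automorphisms because $R$ is. So $W_i$ is a product of automorphisms with inverse $W_i^{-1}=\rho(w_i^{-1})$.

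Next I would show the two inclusions directly. If $x \in E(R_{(k_i)},\lambda)$, then
\[
B_i(W_i^{-1}x)=W_i^{-1}R_{(k_i)}W_iW_i^{-1}x=W_i^{-1}R_{(k_i)}x=\lambda W_i^{-1}x ,
\]
so $W_i^{-1}(E(R_{(k_i)},\lambda))\subset E(B_i,\lambda)$. Conversely, if $B_iy=\lambda y$, apply $W_i$ to get $R_{(k_i)}(W_iy)=\lambda W_iy$. Hence $W_iy\in E(R_{(k_i)},\lambda)$ and $y=W_i^{-1}(W_iy)$ lies in the image. This gives equality. Since Lemma~\ref{lem:supset} shows that $E(R_{(k_i)},\lambda)\supset E_{(k_i,\lambda)}$ contains a nonzero vector, and $W_i^{-1}$ is injective, $\lambda$ is an eigenvalue of $B_i$. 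When $\epsilon(i)=-1$, the same computation works with $R_{(k_i)}^{-1}$, the eigenvalue $\lambda^{-1}$, and the space $E_{(k_i,\lambda)}^{-1}$.

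For the second statement, I read $E_{(B_i,\lambda)}$ as the analogue of $E_{(j,\lambda)}$ transported to $B_i$, namely $W_i^{-1}(E_{(k_i,\lambda)})$, where $E_{(k_i,\lambda)}=V^{\otimes(k_i-1)}\otimes E_\lambda\otimes V^{\otimes(n-k_i-1)}$ is the subspace from Lemma~\ref{lem:supset}. With that reading, the identity holds by definition. Its content is that this subspace lies inside $E(B_i,\lambda)$, which follows by applying $W_i^{-1}$ to the inclusion of Lemma~\ref{lem:supset} and using the first part. In the flat case it equals $E(B_i,\lambda)$.

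The only real subtlety is that $W_i$ depends on the chosen word $w_i$, not just on the braid $b_i$. So the subspace $W_i^{-1}(E_{(k_i,\lambda)})$ is a priori attached to the presentation $(w_i,k_i)$. For the first identity this does not matter, since $E(B_i,\lambda)$ depends only on $B_i$. For the second, I would state explicitly that the notation refers to the given presentation, and defer any independence questions to the later invariance section.
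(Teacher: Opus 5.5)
Your proposal is correct and takes the same route as the paper: the paper cites the general fact that the $\lambda$-eigenspace of $P^{-1}XP$ is $P^{-1}(E)$ for invertible $P$ and then appeals to Lemma~\ref{lem:supset}, and your two inclusions verify exactly that fact. Your explicit checks that $W_i$ is invertible, your treatment of the case $\epsilon(i)=-1$, and your remark that $W_i^{-1}(E_{(k_i,\lambda)})$ depends on the chosen word $w_i$ go slightly beyond what the paper writes, but they do not change the argument.
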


\begin{proof}
In general if $E$ is an eigenspace of $X$, and $P$ is  a non-singular matrix, then the eigenspace of $P^{-1} X P$ for the same eigenvalue
is $P^{-1}(E)$.
Hence the statement follows from Lemma~\ref{lem:supset}.
\end{proof}

\begin{definition}
{\rm

For a braid system $\vec{b}=(b_1, \ldots, b_m)$ of a surface braid $S$
and a representation $\rho$ with respect to a YBO $R$, 
denote by $\vec{B}:=(B_1, \ldots, B_m)$.
Define the {\it braid system eigenspace} $E(\vec{B}, \lambda)$ for an eigenvalue $\lambda$ of $R$ by
the intersection $\bigcap_{i=1}^m E(B_i, \lambda)= \bigcap_{i=1}^m  W_i^{-1} ( E(R_{(k_i)}, \lambda )) $. 
Similarly, define $E_{(\vec{B}, \lambda)} := \bigcap_{i=1}^m W_i^{-1} ( E_{(k_i, \lambda)} ) $.

For the empty braid system, we define $\vec B=I$, the identity matrix of rank $n$, 
and $E(\vec B, \lambda)= V^{\otimes m}$ for $\lambda =1$ and $0$ otherwise. 
}
\end{definition}

Lemma~\ref{lem:supset} implies that $E(\vec{B}, \lambda)\supset E_{(\vec{B}, \lambda)} $.
Thus nontriviality of $E_{(\vec{B}, \lambda)} $ implies nontriviality of $E(\vec{B}, \lambda)$.
The reason for having two definitions is that $E_{(\vec{B}, \lambda)} $ is computable while computing $E(\vec{B}, \lambda)$
is elusive for non-flat modules over $\mathbb k$ at this time.
Therefore we focus on $E_{(\vec{B}, \lambda)} := \bigcap_{i=1}^m W_i^{-1} ( E_{(k_i, \lambda)} ) $.

%(Reminder:)  It is known
%(found in mathoverflow in comment in TeX file)
%%https://mathoverflow.net/questions/268581/tensor-products-and-intersections-of-subspaces
%that for subspaces 
%if $A, C \subset V$, $A \cap C \neq \{ 0 \} $,
%$B, D \subset U$, $B \cap D \neq \{ 0 \} $,
%then $(A \otimes B ) \cap (C \otimes D)=(A \cap C) \otimes (B \cap D)$. 

%If $i<j$ and $j-i>1$, consider the first $k_1$ and second $k_2$ factors $V^{\otimes k_1} \otimes V^{\otimes k_2}=V^{\otimes n}$ such that the $R$-factors in $R_{(i)}$ and $R_{(j)}$ are in $$V^{\otimes k_1} $ and $$V^{\otimes k_2} $, respectively. From the above statement we obtain 

\section{Invariance of  braid system eigenspaces under slide equivalence}\label{sec:eigeninvariance}

The goal of this section is to show that the  braid system eigenspace $E=E(\vec{B}, \lambda)$ 
 is invariant under slide equivalence of braid systems, which implies that $E$ is an invariant of surface braids.

The slide equivalence of braid systems \cite{Kamada:SB}, Section 2.7, 
is generated by 
\begin{eqnarray}
	(b_1, \ldots, b_m) &\sim&  (b_1 , \ldots, b_{i-1}, b_i b_{i+1} b_i^{-1} , b_i, b_{i+2}, \ldots, b_m).
	\label{eqn:slide1} %\\
%	(b_1, \ldots, b_n) &\sim&  (b_1 , \ldots, b_{i-1}, b_{i+1}, b^{-1}_{i+1} b_i b_{i+1} , b_{i+2}, \ldots, b_n).\label{eqn:slide2}
\end{eqnarray}

\begin{figure}[htb]
\begin{center}
\includegraphics[width=3in]{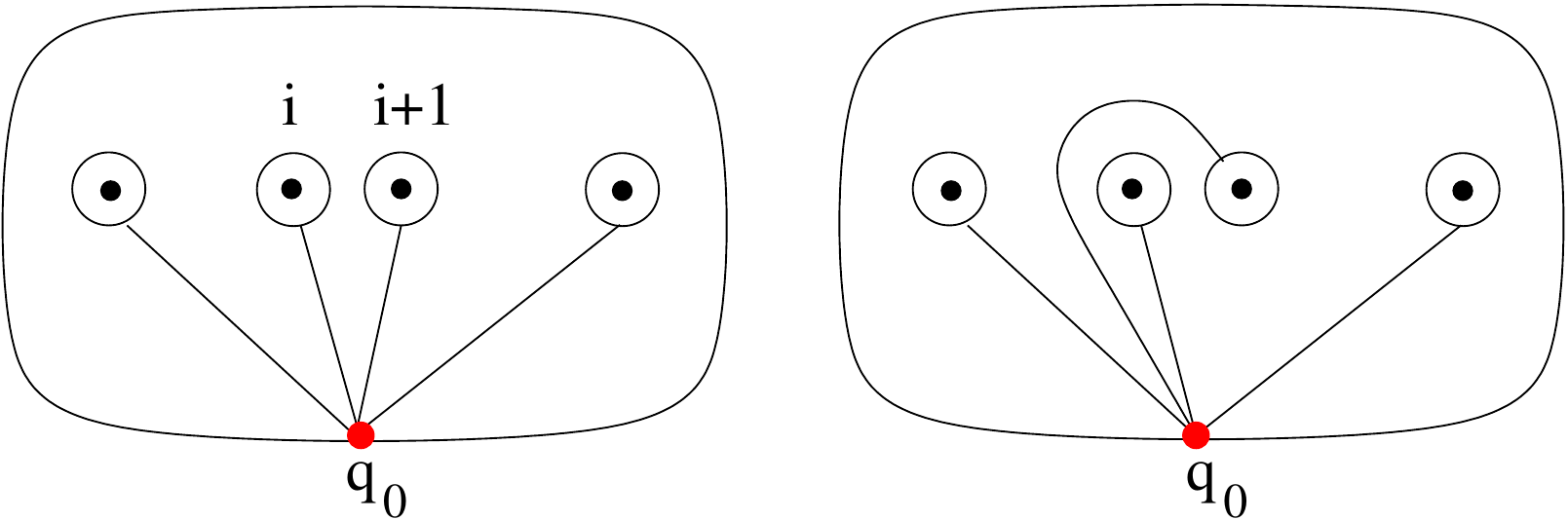}
\end{center}
\caption{Slide equivalence on charts}
\label{slide}
\end{figure}

The slide equivalence move \eqref{eqn:slide1} is depicted in Figure~\ref{slide}, where the move is to change the left figure to the right. The inverse action follows from the above.

\begin{theorem}\label{thm:sfc_braid}
The spaces $E(\vec{B}, \lambda)$ and $E_{(\vec{B}, \lambda)}$ are invariants of surface braids, 
which we call {\emph the Yang-Baxter (YB) eigenspace invariant}. 
Therefore we also write $E(S, \lambda)$ and  $E_{(S, \lambda)}$.
\end{theorem}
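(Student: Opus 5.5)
The plan is to use the fact that surface braids are classified, via Kamada's theory, by braid systems up to slide equivalence. It will therefore suffice to show two things. First, $E(\vec B,\lambda)$ and $E_{(\vec B,\lambda)}$ depend only on the braids $b_i$ (equivalently, on $\rho(b_i)$), not on the chosen words $w_i$. Second, both spaces are unchanged under the generating move \eqref{eqn:slide1}; its inverse then follows automatically, since the move is invertible and we are comparing sets. Invariance under the other standard ambiguities (choice of base point and arc system, chart moves) is exactly what slide equivalence encodes, so no separate treatment of these is needed.

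\textbf{Step 1 (independence of presentation).} Suppose $b_i$ is written in two ways, $w^{-1}\sigma_k^{\epsilon}w = w'^{-1}\sigma_{k'}^{\epsilon'}w'$. By Lemma~\ref{lem:conjeigen}, $E(B_i,\lambda)$ is intrinsically the $\lambda$-eigenspace of the single operator $B_i=\rho(b_i)$, so it depends only on $b_i$. For $E_{(\vec B,\lambda)}$ this is less immediate, because $W_i^{-1}(E_{(k_i,\lambda)})$ might a priori depend on the word. I would handle this by noting that $\sigma_k^\epsilon$ and $\sigma_{k'}^{\epsilon'}$ are conjugate, so $\epsilon=\epsilon'$ (for instance by the exponent sum), and then checking that $\rho(\sigma_{k+1}\sigma_k)$ maps $E_{(k,\lambda)}$ onto $E_{(k+1,\lambda)}$. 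This follows from the YBE together with the definition $E_{(k,\lambda)}=V^{\otimes(k-1)}\otimes E_\lambda\otimes V^{\otimes(n-k-1)}$. I expect this to be the main obstacle, since the centralizer of $\sigma_k$ must also be controlled. If it fails in general, I would instead define $E_{(\vec B,\lambda)}$ relative to the words obtained from the chart, and verify invariance under chart moves, which amounts to the same local computations.

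\textbf{Step 2 (slide move).} Only the entries in positions $i$ and $i+1$ change, with
\[
(B_i,B_{i+1})\mapsto (B_iB_{i+1}B_i^{-1},\,B_i).
\]
Let $E_i=E(B_i,\lambda)$ and $E_{i+1}=E(B_{i+1},\lambda)$. By the conjugation principle in the proof of Lemma~\ref{lem:conjeigen}, the eigenspace of $B_iB_{i+1}B_i^{-1}$ is $B_i(E_{i+1})$. So we must show
\[
B_i(E_{i+1})\cap E_i = E_{i+1}\cap E_i .
\]
For the inclusion $\supseteq$: if $x\in E_i\cap E_{i+1}$, then $B_i x=\lambda x$, so $x=B_i(\lambda^{-1}x)\in B_i(E_{i+1})$. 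This uses that $\lambda$ is invertible, which holds since $R$ is an automorphism. The case $\epsilon(i)=-1$ is the same with $\lambda^{-1}$. For $\subseteq$: if $x=B_iy$ with $y\in E_{i+1}$ and $x\in E_i$, then $B_i^{-1}x=\lambda^{-1}x$, so $y=\lambda^{-1}x\in E_i\cap E_{i+1}$, and hence $x\in E_{i+1}$. Intersecting with the unchanged terms then gives invariance of $E(\vec B,\lambda)$.

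\textbf{Step 3 (the space $E_{(\vec B,\lambda)}$).} The same argument applies to $E_{(\vec B,\lambda)}$. The new $i$-th entry is represented by the word $w_{i+1}b_i^{-1}$-conjugate, so its space is $B_i\big(W_{i+1}^{-1}E_{(k_{i+1},\lambda)}\big)$. The key point is that $B_i$ acts as $\lambda^{\epsilon}$ on $W_i^{-1}E_{(k_i,\lambda)}\subset E(B_i,\lambda)$, by Lemma~\ref{lem:supset}. With this, the two-inclusion computation of Step~2 goes through verbatim.
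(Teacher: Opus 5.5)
Your Step~2 is the paper's proof. The paper only verifies that the slide move \eqref{eqn:slide1} preserves $E(\vec B,\lambda)$, using the same two-inclusion computation with $B_i'=B_iB_{i+1}B_i^{-1}$ and $B_{i+1}'=B_i$. For $E(\vec B,\lambda)$ your argument, including the exponent-sum remark in Step~1, is correct. One small correction: ``$\lambda$ is a unit because $R$ is an automorphism'' does not follow over an arbitrary ring. For example, take $\mathbb k=\mathbb Z$ and $R=2\cdot\mathbb 1$ on $\mathbb Q\otimes_{\mathbb Z}\mathbb Q\cong\mathbb Q$. The paper also tacitly assumes $\lambda$ is invertible, so state it as a hypothesis.

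The gap is in your treatment of $E_{(\vec B,\lambda)}$, at Step~1.

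\begin{itemize}
\item \emph{What the YBE gives.} It gives $R_{(k+1)}\,R_{(k)}R_{(k+1)}=R_{(k)}R_{(k+1)}\,R_{(k)}$. Hence $R_{(k)}R_{(k+1)}=\rho(\sigma_k\sigma_{k+1})$ carries $E(R_{(k)},\lambda)=\ker(R_{(k)}-\lambda)$ onto $E(R_{(k+1)},\lambda)$. Incidentally, it is this product, not $\rho(\sigma_{k+1}\sigma_k)$, that goes from $k$ to $k+1$.
\item \emph{Why this is not enough.} That is a statement about kernels. But $E_{(k,\lambda)}$ is the image of $V^{\otimes(k-1)}\otimes E_\lambda\otimes V^{\otimes(n-k-1)}$. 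By Lemma~\ref{lem:supset} the two agree when $V$ is flat, but then $E_{(\vec B,\lambda)}=E(\vec B,\lambda)$ and your Steps~1 and~3 are superfluous.
\item \emph{What the non-flat case needs.} This is the only case in which $E_{(\vec B,\lambda)}$ can differ from $E(\vec B,\lambda)$. You need $R_{(k)}R_{(k+1)}$ to map the image of $V^{\otimes(k-1)}\otimes E_\lambda\otimes V^{\otimes(n-k-1)}$ onto the image of $V^{\otimes k}\otimes E_\lambda\otimes V^{\otimes(n-k-2)}$. You also need elements of the centralizer of $\sigma_k$, such as $\sigma_{k+1}\sigma_k^2\sigma_{k+1}$, to preserve $E_{(k,\lambda)}$.
\item \emph{Status of that requirement.} It is a naturality property of the braiding with respect to the inclusion $E_\lambda\hookrightarrow V\otimes V$. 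It holds, for instance, when $R$ is the braiding of a braided monoidal category in which this inclusion is a morphism. The intertwining relation from the YBE only controls kernels, not these images, and you give no argument for it.
\end{itemize}

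Your fallback does not avoid the issue. Consider the chart move in which a black vertex passes through a white vertex (move (A) in Figure~\ref{Cmoves}). It shifts the label at the black vertex between adjacent values and alters $w$ by a braid conjugating $\sigma_k$ to $\sigma_{k\pm1}$. So invariance under chart moves requires exactly the same identity.

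As it stands, your proposal establishes the theorem for $E(\vec B,\lambda)$, and for $E_{(\vec B,\lambda)}$ only when $V$ is flat. The paper's proof is no better here, since it never treats $E_{(\vec B,\lambda)}$ separately. To finish, either add a flatness hypothesis or prove the naturality statement above.
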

\begin{proof}  
		It is known~\cite{Kamada:SB} that equivalent surface braids have slide equivalent braid systems. Hence we show that the eigenspaces are invariant under slide equivalence. 
			Set $\vec{b}=(b_1, \ldots, b_m)$
		and let
		\begin{eqnarray*}
				\vec{b}' = (b_1', \ldots, b_m')= (b_1 , \ldots, b_{i-1}, b_i b_{i+1} b_i^{-1} , b_i, b_{i+2}, \ldots, b_m). 
		\end{eqnarray*}
		 For a representation $\rho$ derived from a YBO $R$, let $B_i$ and $B_i'$ denote $\rho(b_i)$ and $\rho(b_i')$ as before. 
				We  
		show that each vector $v\in E(\vec{B}, \lambda)$ is a $\lambda$ eigenvector for each $B_k'$, 
		and similarly, 
		that each vector in $v\in E(\vec{B}', \lambda)$ is a $\lambda$ eigenvector for each $B_k$. % $b_k$. 
		Let $v \in E(\vec{B}, \lambda)$. By definition, $v$ is a $\lambda$ eigenvector of $B_k$ for each $k=1, \ldots, m$. It follows directly by definition of $\vec B'$ that $v$ is an eigenvector of $B'_k$ for $k=1,\ldots, i-1, i+1, \ldots , m$. For $k=i$, we have that 
		\begin{eqnarray*}
				B'_iv =  B_i B_{i+1} B_i^{-1}v = \frac{1}{\lambda}B_i B_{i+1}v = B_iv = \lambda v. 
		\end{eqnarray*}
		This shows the inclusion $E(\vec{B}, \lambda) \subset E(\vec{B}', \lambda)$. To show the reverse inclusion, consider $v\in E(\vec{B}', \lambda)$. Then $v$ is a $\lambda$ eigenvector of $B'_k$ for all $k$. It follows that $v$ is a $\lambda$ eigenvector of $B_k$ for $k=1,\ldots ,i, i+2,\ldots, m$. For $k=i+1$, since $B_iB_{i+1}B^{-1}_iv = \lambda v$, we have that $B^{-1}_iv$ is a $\lambda$ eigenvector of $B_{i+1}$. Since $B_i = B'_{i+1}$ by assumption , it follows that $v$ is a $\lambda$ eigenvector of $B_i$, and therefore a $\lambda^{-1}$ eigenvector of $B^{-1}_i$. Therefore, $\frac{1}{\lambda}B_{i+1} v = v$, from which it follows that $E(\vec{B}', \lambda) \subset E(\vec{B}, \lambda)$. 
%		To prove invariance under \eqref{eqn:slide2}, notice that 
%		\begin{eqnarray*}
%				(b_1, \ldots, b_n) &\sim&  (b_1 , \ldots, b_{i-1}, b_{i+1}, b^{-1}_{i+1} b_i b_{i+1} , b_{i+2} \ldots, b_n), 
%		\end{eqnarray*}
%		can be obtained  by applying \eqref{eqn:slide1} to $(b_1 , \ldots, b_{i-1}, b_i b_{i+1} b_i^{-1} , b_i, b_{i+2}, \ldots, b_n)$, it follows that \eqref{eqn:slide2} is a consequence of  \eqref{eqn:slide1}, which is in line with Remark~\ref{rmk:slide_equivalence}.
\end{proof}

\section{Quandle colorings as eigenvectors}\label{sec:quandle_col}

First, recall a \textit{quandle}~\cite{Joyce82,Matveev82}, is a non-empty set $X$ with a binary operation $*:X\times X\to X$ satisfying the following axioms.
\begin{itemize}
\item[(1)] For any $a\in X$, we have $a*a=a$.
\item[(2)] For any $a\in X$, the map $S_a:X\to X$ defined by $S_a(x)=x*a$ is a bijection.
\item[(3)] For any $a,b,c\in X$, we have $(a*b)*c=(a*c)*(b*c)$.
\end{itemize}
For details of examples, quandle colorings of knots, and quandle cocycle invariants, we refer to \cite{CJKLS}.

In this section we relate the number of colorings of surface braids by quandles and the eigenspace invariant defined in the preceding section of the linearlized quandles (quandle rings). 
  A {\it quandle ring} $\mathbb k [X]$ for a ring $\mathbb k$ and a quandle $X$ is a non-associative ring generated by elements of $X$ with multiplication obtained by  linearly extending the quandle operation on $X$.  

Let $X$ be a finite quandle.
Define \cite{Brieskorn} the action of the $n$-string  braid group $\mathbb B_n$ on $X^n$ by 
\begin{eqnarray*}
\sigma_i  (x_1\ldots, x_n)  &=& 
 (x_1,  \ldots, x_{i-1},  x_{i+1}, x_i * x_{i+1},  x_{i+2}, \ldots, x_n) , \\
\sigma_i^{-1} ( x_1,  \ldots, x_n)  &=& 
(x_1,  \ldots, x_{i-1}, x_{i+1} * x_{i}, x_i, x_{i+2},  \ldots, x_n) .
\end{eqnarray*}

\begin{lemma}\label{lem:fundQ}
Let $S$ be a surface braid with the set of black vertices $\Sigma=(y_1, \ldots, y_m)$.
Let $\vec{b}=(b_1, \ldots, b_m)$ be its braid system.
Then the fundamental quandle of $S$ is generated by $x_1, \ldots, x_n$ with relations
$b_i (x_1, \ldots, x_n)=(x_1, \ldots, x_n)$, $i=1, \ldots, m$.
\end{lemma}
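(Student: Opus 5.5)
The plan is to compute the fundamental quandle of $S$ from a Wirtinger-type presentation and then simplify it with the chart structure. Fix a base point $q_0\in\partial D_2$ and view $S$ as a surface in $D_1\times D_2$ with broken surface diagram $\pi'(S)\subset I_2\times D_2$. The fundamental quandle of a knotted surface is generated by the sheets of such a diagram, with one relation for each double point curve: the under-sheet on one side is the over-sheet quandle-acted on the under-sheet on the other side. I take this description from the literature on quandles of knotted surfaces (\cite{CJKLS}) and check that it also applies to properly embedded surfaces in $D^4$ with trivial boundary braid. The $n$ sheets over $q_0$ (the $n$ points of $X_n$ in the cross section over $q_0$) are labeled $x_1,\dots,x_n$.

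First I reduce the generators. Cross sections $\pi^{-1}(\gamma(t))$ along a path $\gamma$ in $D_2$ from $q_0$ are classical braid diagrams. When $\gamma$ crosses a chart edge labeled $j$, that is, a double point curve, the colors of the strands change exactly by the Brieskorn action of $\sigma_j^{\pm1}$: strands $j$ and $j+1$ swap, and the under one becomes $x_j*x_{j+1}$ or its inverse version. This is the standard correspondence between Wirtinger relations along a double curve and the braid action, and I will match the sign convention to the one in Figure~\ref{Hurwitz} (B). Consequently the color of every sheet over any point of $D_2\setminus\Gamma$ is determined by $x_1,\dots,x_n$ after transport along a path. So the $x_i$ generate, and the double point relations are absorbed, provided transport is path-independent. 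That holds exactly when the monodromy around every loop acts trivially on the coloring.

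Next I handle the relations via loops. $D_2\setminus\Sigma$ has fundamental group generated by the loops obtained from the Hurwitz arcs $a_i$, and $D_2$ is simply connected, so the only obstructions come from loops around the black vertices. Crossings and white vertices impose nothing new, since the Brieskorn action is well defined on $\B_n$: crossings correspond to commutation, and white vertices to the braid relation, which holds because of quandle axiom (3). Walking along $a_i$, around $y_i$, and back reads the braid $b_i=w_i^{-1}\sigma_{k_i}^{\epsilon(i)}w_i$. Near $y_i$, strands $k_i$ and $k_i+1$ lie on one sheet through the branch point, so they must carry equal colors. Pulled back to $q_0$, this says $w_i(x)$ has equal entries $k_i,k_i+1$. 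Using $a*a=a$, that condition is equivalent to $\sigma_{k_i}^{\pm1}$ fixing $w_i(x)$, that is, to $b_i(x_1,\dots,x_n)=(x_1,\dots,x_n)$. Conversely, given these relations, colors extend consistently over the whole diagram.

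The main obstacle is making rigorous the passage from the diagram presentation to this monodromy presentation, in particular checking that path-independence of transport and identification at branch points are exactly captured by the $m$ relations $b_i(x)=x$. I would organize this as a van Kampen-style induction over the cells of $D_2\setminus\Gamma$, following Kamada's treatment of the fundamental group of surface braids via braid monodromy \cite{Kamada:SB}, and replacing the group by the quandle throughout.
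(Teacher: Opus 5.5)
Your proposal is correct and follows the paper's argument. The paper starts from the presentation in \cite{CJKLS} (built on \cite{Kamada:SB}), with generators $x_1,\dots,x_n$ and relations $[w_i(x)]_{k_i}=[w_i(x)]_{k_i+1}$, and then uses $a*a=a$ to rewrite each relation as $\sigma_{k_i}$ fixing $w_i(x)$, i.e.\ $b_i(x)=x$, which is exactly your branch-point step. The Wirtinger-to-monodromy reduction you flag as the main obstacle is what the paper takes from that literature instead of proving, so your transport argument justifies the cited presentation rather than departing from the paper.
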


\begin{proof}
Recall the notation
$$\vec{b}=(b_1, \ldots, b_m)=(w_1^{-1} \sigma^{\epsilon_1}_{k_1} w_1, \ldots, w_m^{-1} \sigma^{\epsilon_m}_{k_m} w_m ). $$
In \cite{CJKLS}, using \cite{Kamada:SB}, the presentation of the fundamental quandle is given by generators 
$x_1, \ldots, x_n$ and relations 
$[ w_i (x_1, \ldots, x_n) ]_{k_i} = [ w_{i} ( x_1, \ldots, x_n)]_{k_{i}+1}$, 
$i=1, \ldots, n$, where the subscript $k_i$ indicates $k_i$-th term of an $n$-tuple. 
Set $(x_1', \ldots, x_n')=w_i (x_1, \ldots, x_n)$. Then the relations are written as 
$x_{k_i}' = x_{k_i +1}'$. 
A small circle around $y_i$ intersects an arc of the chart of $S$ labeled $x_{k_i}$,
and the corresponding monodromy is 
$$
\sigma_{k_i} (x_1', \ldots, x_n') = (x_1', \ldots, x_{k_i-1}', x_{k_{i+1}}' , x_{k_i}' * x_{k_{i+1}}', x_{k_i+2}', \ldots, x_n') ,
$$ 
where the two sheets around a branch point receive the same label, resulting in 
the relation $ x_{k_i}'=x_{k_{i} +1}'$. 
Hence this relation corresponds to $\sigma_{k_i} (x_1', \ldots, x_n') =(x_1', \ldots, x_n') $.
Thus the relation 
$[ w_i (x_1, \ldots, x_n) ]_{k_i} = [ w_{i} ( x_1, \ldots, x_n)]_{k_{i}+1}$ 
 can be rewritten as 
$(w_i^{-1} \sigma_{k_i} w_i )  (x_1, \ldots, x_n)= (x_1, \ldots, x_n)$, that is,
$b_i  (x_1, \ldots, x_n)= (x_1, \ldots, x_n)$, as desired.
\end{proof}

This lemma implies the following, which was used in a different form in \cite{CJKLS}. 

\begin{corollary}\label{cor:color}
A quandle vector $(x_1 , \ldots, x_n)$  of a quandle $X$ at the base point gives rise to a quandle coloring of a surface braid $S$
if and only if $b_i (x_1 , \ldots, x_n)=(x_1 , \ldots, x_n)$ for all $i=1, \ldots, m$.
\end{corollary}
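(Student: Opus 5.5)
The corollary follows almost directly from Lemma~\ref{lem:fundQ}, combined with the standard identification of quandle colorings with quandle homomorphisms.

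\emph{Step 1: colorings as homomorphisms.} A quandle coloring of $S$ by $X$ is the same as a quandle homomorphism from the fundamental quandle $Q(S)$ to $X$. Such a homomorphism is determined by the images of the generators of $Q(S)$. By Lemma~\ref{lem:fundQ}, these generators are the labels $x_1,\ldots,x_n$ of the sheets at the base point $q_0$. So a coloring is determined by the vector $(x_1,\ldots,x_n)\in X^n$ it assigns at the base point.

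\emph{Step 2: which vectors extend.} By the universal property of a presentation, a vector $(x_1,\ldots,x_n)\in X^n$ extends to a homomorphism $Q(S)\to X$ if and only if it satisfies the defining relations. By Lemma~\ref{lem:fundQ}, these relations are $b_i(x_1,\ldots,x_n)=(x_1,\ldots,x_n)$ for $i=1,\ldots,m$, where $b_i$ acts on $X^n$ through the Brieskorn action.

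\emph{Step 3: compatibility of the two evaluations.} The one point needing care is that the relations in the lemma are formal expressions in the free quandle, while the corollary evaluates them in $X$. Write $x'=w_i(x)$. Under a quandle homomorphism, the evaluation of the formal expression $w_i(x_1,\ldots,x_n)$ is carried to the Brieskorn action of $w_i$ on the evaluated vector in $X^n$, because both are built from the same operations $*$ and $*^{-1}$ coordinatewise. Hence the relation $[w_i(x)]_{k_i}=[w_i(x)]_{k_i+1}$ holds in $X$ if and only if $x'_{k_i}=x'_{k_i+1}$. By idempotency, $a*a=a$, this is equivalent to $\sigma_{k_i}^{\epsilon(i)}$ fixing $x'$. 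Since $w_i$ acts bijectively on $X^n$, this is in turn equivalent to $b_i(x)=x$.

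\emph{Step 4: the converse.} The reverse implication uses the same chain of equivalences. I expect no real obstacle. The only subtlety is the equivalence "$\sigma_k$ fixes a vector $\iff$ its $k$-th and $(k+1)$-th entries agree", which also holds for $\sigma_k^{-1}$ and uses both quandle axiom (1) and the explicit formula for the action.
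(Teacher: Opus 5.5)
Your proposal is correct and follows essentially the same route as the paper: the corollary is read off from Lemma~\ref{lem:fundQ} by identifying colorings with quandle homomorphisms from the fundamental quandle, i.e.\ with assignments to $x_1,\ldots,x_n$ that satisfy the relations $b_i(x)=x$. Your Step~3 spells out two points the paper leaves implicit: that evaluation in $X$ is compatible with the Brieskorn action, and the idempotency check that $\sigma_{k}^{\pm 1}$ fixes a vector exactly when its $k$-th and $(k+1)$-th entries agree (the paper's lemma proof only treats the positive case).
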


Let $V= \mathbb \C [X]$ be the quandle ring of a quandle $X$. 
The linealization of the set-theoretic YBO $(x,y) \mapsto (y, x*y)$ for $X$, 
$R : V \otimes V \rightarrow  V \otimes V$, is obtained by linearly extending 
$x \otimes y \mapsto y \otimes x*y$ defined on basis vectors $x, y \in X$. 
This is a linearization of $(x,y) \mapsto (y, x*y)$ defined on $X \times X$ to 
$\mathbb C [X \times X] \cong \mathbb C [X] \otimes  \mathbb C [X] $.

The  braid group $\mathbb B_n$ acts on $X^n$ as permutations defined by 
$$\sigma_i (x_1, \ldots, x_n) = (x_1, \ldots, x_{i-1}, x_{i+1}, x_i * x_{i+1}, x_{i+2}, \ldots, x_n), $$
inducing a homomorphism $\mathbb B_n \rightarrow \Sigma_k$,  where $k=|X| n$. 
The YBO $R : V \otimes V \rightarrow  V \otimes V$ defined above gives  a representation 
$\rho: \mathbb B_n \rightarrow {\rm Aut} ( V^{\otimes n}) $
by the map $\sigma_j \mapsto \mathbb 1^{\otimes (j-1)} \otimes R \otimes \mathbb 1^{\otimes (n-j-1)}$.
Then $\rho$ is regarded as 
the linearlization of the action of $\mathbb B_n$ on $X^n $ by 
the isomorphisms $\C [X^n]=\C [X]^{\otimes n} = V^{\otimes n}$. 

\begin{proposition}
We have that the number of quandle colorings ${\rm col}_X (S)$ of a surface braid $S$ by a quandle $X$ is less than or equal to the rank of $E(\vec{B}, 1)$, ${\rm col}_X (S) \leq {\rm rank}E(\vec{B}, 1)$.
\end{proposition}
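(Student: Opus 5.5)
The plan is to exhibit, for each quandle coloring, an explicit vector in $E(\vec{B},1)$, and to check that these vectors are linearly independent. Under the identification $V^{\otimes n}=\C[X]^{\otimes n}\cong \C[X^n]$, the representation $\rho$ is the linearization of the permutation action of $\mathbb B_n$ on $X^n$. So for every braid word $b$ and every basis vector $x_1\otimes\cdots\otimes x_n$ we have
\[
\rho(b)(x_1\otimes\cdots\otimes x_n)= y_1\otimes\cdots\otimes y_n, \qquad (y_1,\ldots,y_n)=b(x_1,\ldots,x_n).
\]
I would verify this first on the generators $\sigma_j$ and $\sigma_j^{-1}$. For $\sigma_j$ it is the definition of $R$. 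For $\sigma_j^{-1}$, one checks that $R^{-1}(y\otimes z)=(z*^{-1}y)\otimes y$ (here $*^{-1}$ is the inverse of $S_y$), which matches the set-theoretic inverse of $\sigma_j$. The general case then follows by composing along the word, using that $\rho$ is a homomorphism and the left-action convention fixed in Section~\ref{sec:prelim}.

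Next I would use Corollary~\ref{cor:color}. By that result, the colorings of $S$ are in bijection with the tuples $\vec x=(x_1,\ldots,x_n)\in X^n$ satisfying $b_i(\vec x)=\vec x$ for all $i=1,\ldots,m$. I expect the coloring is determined by its values at the base point, as the corollary implicitly asserts, so that ${\rm col}_X(S)$ equals the number of such fixed tuples. For each such $\vec x$, put $v_{\vec x}=x_1\otimes\cdots\otimes x_n$. The first step gives $B_i v_{\vec x}=\rho(b_i)v_{\vec x}=v_{\vec x}$ for every $i$. Hence $v_{\vec x}$ is an eigenvector with eigenvalue $1$ of each $B_i$, so $v_{\vec x}\in\bigcap_i E(B_i,1)=E(\vec{B},1)$.

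Finally, the vectors $v_{\vec x}$ for distinct $\vec x$ are distinct elements of the standard basis $X^n$ of $\C[X]^{\otimes n}$, so they are linearly independent. Therefore
\[
\rank E(\vec{B},1)\ge \#\{\vec x: b_i(\vec x)=\vec x \ \forall i\}={\rm col}_X(S).
\]
One point needs care: $1$ must be an eigenvalue of $R$, so that $E(\vec B,1)$ is defined as in the paper. This holds because $R(x\otimes x)=x\otimes x*x=x\otimes x$ by idempotency. In the empty-system case the bound is trivial.

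The main obstacle is the bookkeeping in the first step. The braid words in a braid system are read right to left, and the action must be a left action compatible with $\rho(b_i)=W_i^{-1}R_{(k_i)}^{\epsilon(i)}W_i$. I would handle this by proving the compatibility once for generators and inverses and then inducting on word length. Sign and order conventions then cannot cause trouble, because a fixed point of $b$ is also a fixed point of $b^{-1}$.
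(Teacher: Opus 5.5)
Your proposal is correct and is essentially the paper's proof: the paper likewise invokes Corollary~\ref{cor:color} and observes that the fixed tuples $(x_1,\ldots,x_n)$, viewed as simple tensors, are distinct standard basis vectors lying in $E(\vec{B},1)$, which gives the inequality. You make explicit what the paper leaves implicit, namely that $\rho$ linearizes the permutation action (including for $\sigma_j^{-1}$) and that $1$ is an eigenvalue of $R$ by idempotency. One small point: your closing remark about $b^{-1}$ handles sign conventions but not word-order reversal, which is harmless because your generator-by-generator homomorphism check already settles the order.
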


\begin{sloppypar}
\begin{proof}
By Corollary~\ref{cor:color}, a coloring is uniquely determined by 
the simultaneous equations $b_i(x_1, \ldots, x_n)=(x_1, \ldots, x_n)$.
The solutions $(x_1, \ldots, x_n)$ form sub-basis vectors of $E(\vec{B}, 1)$, hence the result follows.
\end{proof}
\end{sloppypar}

This proposition shows that the eigenspace invariants defined in this paper are in general nontrivial.

Below  we use the following well known lemma on the permutation representation induced by a $G$-set $Y$, where $G$ is a finite group and $Y$ is a finite set.
 In this case the permutation representation $\C [Y]$ is obtained from the $G$-action on $Y$ by linearly extending the action on $Y$ to $\C [Y]$.
Let $\C [Y]^G$ denote the space of  $G$-invariant vectors, $\C [Y]^G = \{ v \in \C [Y] \ | \ g  \cdot v =v \ {\rm for \ all} \  g \in G \}$. Note that an invariant subspace $W$ is defined by $G \cdot W \subset W$ and the space of $G$-invariant vectors is a special example of invariant subspace. 

\begin{lemma}\label{lem:GrepNote}
Let $O_1, \ldots, O_t$ be $G$-orbits in $Y$. Set $v_i=\sum_{y \in O_i} y \in \C[Y]$, $i=1, \ldots, t$. 
Then $\{ v_i \}_{i=1}^t$ span $\C [Y]^G$.
In particular,  ${\rm dim} (\C [Y]^G ) $ is equal to the number of $G$-orbits of $Y$.
\end{lemma}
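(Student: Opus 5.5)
The plan is to prove two things: that the orbit sums $v_1, \ldots, v_t$ lie in $\C[Y]^G$, and that they form a basis of it. The dimension statement then follows at once.

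\emph{Invariance.} Fix an orbit $O_i$ and $g \in G$. Left multiplication by $g$ permutes $Y$ and maps $O_i$ bijectively onto itself, since $O_i$ is a single orbit and $g$ acts invertibly. Extending linearly, we get
\[
g \cdot v_i = \sum_{y \in O_i} g y = \sum_{z \in O_i} z = v_i ,
\]
so each $v_i$ lies in $\C[Y]^G$.

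\emph{Linear independence.} The orbits $O_1, \ldots, O_t$ partition $Y$, so the $v_i$ are sums over pairwise disjoint subsets of the standard basis $Y$ of $\C[Y]$. Their supports are therefore disjoint and nonempty, and the $v_i$ are linearly independent.

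\emph{Spanning.} This is the main step, though it is not difficult. Write an arbitrary $v \in \C[Y]^G$ as $v = \sum_{y \in Y} c_y\, y$. Then
\[
g \cdot v = \sum_{y} c_y\, g y = \sum_{z} c_{g^{-1} z}\, z .
\]
Comparing coefficients in the basis $Y$ with $v$ gives $c_{g^{-1} z} = c_z$ for all $g \in G$ and $z \in Y$. Hence $y \mapsto c_y$ is constant on each $G$-orbit, say equal to $c_i$ on $O_i$, and so $v = \sum_{i=1}^t c_i v_i$.

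Together these steps show that $\{v_i\}_{i=1}^t$ is a basis of $\C[Y]^G$, so $\dim \C[Y]^G = t$, the number of orbits. Finiteness of $G$ is not needed here; only finiteness of $Y$ is used, to ensure each $v_i$ is a finite sum. The argument works over any coefficient ring in place of $\C$, giving a free module of rank $t$. This is the form that will be applied to the $\mathbb B_n$-action on $X^n$ through its finite permutation image.
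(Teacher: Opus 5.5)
The paper gives no proof of this lemma: it is quoted as well known and used only in Lemma~\ref{lem:color}. Your argument is the standard one and is complete.
\begin{itemize}
\item Each orbit sum is invariant.
\item The orbit sums are linearly independent because their supports are disjoint. This step is what the ``in particular'' dimension claim needs, since the statement itself only asserts spanning.
\item They span, because the coefficients of an invariant vector are constant on orbits.
\end{itemize}
Your remarks that finiteness of $G$ is unnecessary, and that the result holds over any coefficient ring with a free module of rank $t$, are also correct.

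One small correction to your closing sentence. In Lemma~\ref{lem:color} the lemma is not applied to the full permutation image of $\mathbb B_n$ on $X^n$. It is applied to the subgroup $H=\langle f_1,\ldots,f_m\rangle$ generated by the permutations induced by the braid system. Using $H$ is what makes $\C[X^n]^H$ coincide with $E(\vec B,1)$.
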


We examine $E(\vec{B}, 1)$ in more detail for the quandle ring $V=\C [X]$ for a quandle $X$ and a YBO $R$ defined from the quandle structure as above.
Let $\rho: \mathbb B_n \rightarrow {\rm Aut} ( V^{\otimes n}) $ be the representation defined by $R$. 
Recall that  $\rho(b)$ is a linearlization of the map  $f_b : X^n \rightarrow X^n$ for $b \in \mathbb B_n$
defined by composing maps of type
$$\sigma_i (x_1, \ldots, x_n) = (x_1, \ldots, x_{i-1}, x_{i+1}, x_i * x_{i+1}, x_{i+2}, \ldots, x_n),$$
and
$$\sigma^{-1}_i (x_1, \ldots, x_n) = (x_1, \ldots, x_{i-1}, x_{i+1}*x_1, x_i, x_{i+2}, \ldots, x_n).$$

Let $\vec{b}=( b_1, \ldots, b_m)$ be a braid system of  a surface braid $S$.
The braid $b_i$  induces the map $f_i$ on $X^n$ as before.
Let $H=\langle f_1, \ldots, f_m \rangle \leq \Sigma_k$, 
which acts on $\C[X^n] \cong V^{\otimes n}$ through the action on $X^n$. 
 Let $(\C [X^n] )^H$ be the space of invariant vectors of $H$.
Let $E(\vec B, 1)= \cap_{i=1}^m E(\rho (b_i) , 1)$ as before.

Identify elements $(x_1, \ldots, x_n ) \in X^n$ with a simple tensor vector $x_1 \otimes \cdots \otimes x_n \in  V^{\otimes n}$. Let $O_1, \ldots, O_t$ be orbits of $H$-action on $X^n$, and for each $i$, 
let $v_i=\sum_{x \in O_i} x \in  \C[X^n] $ regarded as elements of $V^{\otimes n}$, which we call an {\it orbit vector} of $H$. 

\begin{lemma}\label{lem:color}
We have $E(\vec B, 1) = \C [X^n]^H$. 
In particular, $E(\vec B, 1) $ is spanned by orbit vectors $\{ v_i\}_{i=1}^t$.
Moreover, the set of colorings of $S$ by $X$ is in bijection with the set of simple tensor orbit vectors. 
\end{lemma}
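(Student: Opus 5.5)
The proof has three parts. We identify the $1$-eigenspace of each $B_i$ with the fixed vectors of the permutation $f_i$. We intersect these fixed spaces over $i$ to get $H$-invariants and apply Lemma~\ref{lem:GrepNote}. We then use Corollary~\ref{cor:color} to match colorings with orbit vectors that are simple tensors.

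\emph{Step 1.} Under the identification $V^{\otimes n}=\C[X]^{\otimes n}\cong \C[X^n]$, the operator $B_i=\rho(b_i)$ is the linearization of the bijection $f_i$ of $X^n$. Indeed, $\rho$ is a homomorphism, and on generators $R_{(j)}^{\pm1}$ is the linearization of the permutation $\sigma_j^{\pm1}$ of $X^n$. Hence $B_i$ is a permutation matrix in the basis $X^n$, and
\[ E(B_i,1)=\{v\in \C[X^n] \mid f_i\cdot v=v\}. \]
Intersecting over $i$ gives
\[ E(\vec B,1)=\{v \mid f_i v=v \ \text{for all } i\}. \]
A vector fixed by each generator $f_i$ is fixed by every element of $H=\langle f_1,\ldots,f_m\rangle$. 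The converse is trivial. Therefore $E(\vec B,1)=\C[X^n]^H$.

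\emph{Step 2.} Since $H\le \Sigma_k$ is a finite group acting on the finite set $Y=X^n$, Lemma~\ref{lem:GrepNote} applies directly. It shows that $\C[X^n]^H$ is spanned by the orbit vectors $v_1,\ldots,v_t$. These are moreover linearly independent, because they have disjoint supports in the basis $X^n$.

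\emph{Step 3.} The bijection with colorings requires the most care. An orbit vector $v_i=\sum_{x\in O_i}x$ is a simple tensor of basis elements exactly when $|O_i|=1$, that is, when $O_i=\{(x_1,\ldots,x_n)\}$ is a fixed point of $H$. A sum of two or more distinct basis tensors $x_1\otimes\cdots\otimes x_n$ is not of the form $x_1\otimes\cdots\otimes x_n$ with $x_j\in X$. This is how we read ``simple tensor orbit vector'': a tensor of elements of $X$, not an arbitrary decomposable tensor.

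Being a fixed point of $H$ is equivalent to $f_i(x_1,\ldots,x_n)=(x_1,\ldots,x_n)$ for all $i$, i.e. $b_i(x_1,\ldots,x_n)=(x_1,\ldots,x_n)$ for all $i$. By Corollary~\ref{cor:color}, this holds precisely when $(x_1,\ldots,x_n)$ determines a quandle coloring of $S$.

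The assignment coloring $\mapsto x_1\otimes\cdots\otimes x_n$ is therefore the desired bijection. It is injective because a coloring is determined by its vector at the base point, and surjective by the equivalence just shown.

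The main obstacle is the interpretation of ``simple tensor'' in Step 3. A sum over an orbit of size at least $2$ might in principle factor as a decomposable tensor of non-basis vectors, for example when $O_i$ is a product set. So the argument must restrict to tensors of basis elements $x_j\in X$, and I would state that convention explicitly in the proof.
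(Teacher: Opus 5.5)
Your proposal is correct and follows essentially the same route as the paper. You identify $E(\vec B,1)$ with the $H$-invariant vectors of the permutation representation, apply Lemma~\ref{lem:GrepNote}, and match singleton orbits, i.e.\ fixed basis tensors $x_1\otimes\cdots\otimes x_n$, with colorings via Corollary~\ref{cor:color}. Your reading of ``simple tensor orbit vector'' as a basis monomial is the intended one (the paper later calls these monomial invariant vectors), and it is worth stating, since an orbit vector over a product-set orbit can be a decomposable tensor without coming from a coloring.
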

\begin{proof}
Set $Y=X^n$ in Lemma~\ref{lem:GrepNote}, 
on which $\mathbb B_n$ acts.
If  $v \in E(\vec B , 1)$, then $B_i v=v$ for any $B_i=\rho (b_i)$, $i=1, \ldots, m$, hence $v \in  \C [X^n]^H$.
Conversely, if $v \in \C [X^n]^H$, then $B_iv=v$ since $b_i \in H$, hence $v \in E(\vec B , 1)$.
Therefore the statement follows from  Lemma~\ref{lem:GrepNote}.

The tuple $x=(x_1, \ldots, x_n)\in X^n$ gives a coloring of $S$ by $X$ if and only if $b_i(x)=x$, which is equivalent to the condition that $B_i x=x$ under the correspondence $(x_1, \ldots, x_n)\mapsto x_1 \otimes  \cdots \otimes  x_n$.
\end{proof}

%\begin{corollary}
%Let $b\in \mathbb B_n$, $B=\rho(b)$, and $E(B, 1)$ be the $1$-eigenspace as above.
%Then the number of colorings of the closed braid $\hat b$  by $X$ coincides with ${\rm dim} E(B,1)$ if and only if $B=\mathbb 1$,
%the identity matrix. In this case, ${\rm dim}E(B,1)=k=|X|^n$.
%\end{corollary}
%
%\begin{proof}
%Let $m=1$ in Lemma~\ref{lem:color}. 
%If  $B=\mathbb 1$, then ${\rm dim}E(B,1)=k$, and every simple tensor  gives a coloring, so that the number of colorings is also $k$.
%Suppose $B\neq \mathbb 1$. Then there is a simple tensor that is not fixed.
%then its orbit contains nonsimple tensor vector, that does not correspond to a coloring, hence 
%the number of colorings is less than ${\rm dim} E(B,1)$. 
%\end{proof}

\begin{example}\label{ex:22}
{\rm

%It is known \cite{Kamada:SB} 
% that a braid system of a spun $(2,\ell)$-torus knot, for a positive integer $\ell$,  is given by 
%\begin{equation}
%\vec{b}=(b_1, b_2, b_3, b_4)=( \sigma_2, \sigma_2^{-1}, \sigma_1^{-\ell} \sigma_2  \sigma_1^{\ell}, \sigma_1^{-\ell} \sigma_2^{-1}  \sigma_1^{\ell} ). \label{eqn:tknot}
%\end{equation}
%Let $H=\langle b_i \rangle_{i=1}^4$ as above.

%For a spun trefoil, for example, the ribbon chart consists of 
%two free edges of label 2, one outside and the other inside of  three circles of label 1.
%(Section 29.4 in \cite{Kamada:SB}, Figure 29.3.)
%Its braid system is given by
%$$\vec{b}=(b_1, b_2, b_3, b_4)=( \sigma_2, \sigma_2^{-1}, \sigma_1^{-3} \sigma_2  \sigma_1^{3}, \sigma_1^{-3} \sigma_2^{-1}  \sigma_1^{3} ). $$
 We consider the surface braid $S$ of braid index $3$ and the braid system
 $\vec b=(b_1, b_2)=(\sigma_2, \sigma_2^{-1}) $, where $\sigma_2 \in \mathbb B_3$, 
 so that $H=\langle \sigma_2 \rangle$.
 The chart of $S$ is represented by a single linear edge labeled by $2$ with two end point black vertices 
 corresponding to $\sigma_2$. It is a stabilization of the trivial surface of braid index 2 with empty chart, see the last section.

Let $X={\sf R}_3$ be the dihedral quandle of order 3, $x*y=2y-x$ in $\Z_3$. 
Then $V=\C [X]$ is spanned by $e_{i}$, $i \in \Z_3=\{0,1,2\}$,
and $V^{\otimes 3}=\C [X^3]$, spanned by $e_{i,j,k}:=e_i \otimes e_j \otimes e_k$, $(i,j,k)\in \Z_3$.
%We consider $\vec b=(b_1, b_2)=(\sigma_2, \sigma_2^{-1}) $, where $\sigma_2 \in \mathbb B_3$,
%so that $H=\langle \sigma_2 \rangle$.
%

Since $\sigma_2 (x,y,z)=(x,z, 2z-y)$ on $X^3$, we have that $\C [X^3]^H$ is spanned by
$$v_1(i,j)=e_i \otimes e_j \otimes e_j, \quad v_2(i)=\sum_{k \in \Z_3} e_i \otimes e_k \otimes e_{k+1},\quad
v_3(i)=\sum_{k \in \Z_3} e_i \otimes e_k \otimes e_{k-1}$$
for $i,j \in \Z_3$.
Hence for $\rho(b)=B$ we have $${\rm dim}E(B,1)=9 + 3 + 3=15.$$
The number of colorings of the surface braid $S$ represented by $\vec b$  is the number of simple tensors $v_1(i,j)$, which is $9$.
Thus in this case we have ${\rm col}_X(S) < {\rm dim}E(\vec B, 1)$. 

 The closure of $S$ (see the last section) is a trivial knotted sphere and the colorings above are trivial. This example shows that the eigenspace invariant is different from the number of colorings by quandles.

}
\end{example}

\section{Examples from the bracket YBO}\label{sec:bracket}

In this section we examine the YB eigenvalue invariant for the YBO used for the Kauffman bracket polynomial, that yields the Jones polynomial. In particular, it is shown that the invariant defined from this YBO distinguishes surface braids
whose closures are spun $(2, \ell)$-torus knots.

%The matrix that defines the Jones polynomial is given by
%	\begin{eqnarray*}
%				J =  \begin{bmatrix}
%					t^{\frac{1}{2}} & 0 & 0 & 0\\
%					0 & 0 & t & 0\\
%					0 & t & t^{\frac{1}{2}}-t^{\frac{3}{2}} & 0\\
%					0 & 0 & 0 & t^{\frac{1}{2}}
%				\end{bmatrix}
%\end{eqnarray*}
%%% Is it $-t$ instead of $t $? 

\begin{figure}[htb]
\begin{center}
\includegraphics[width=3in]{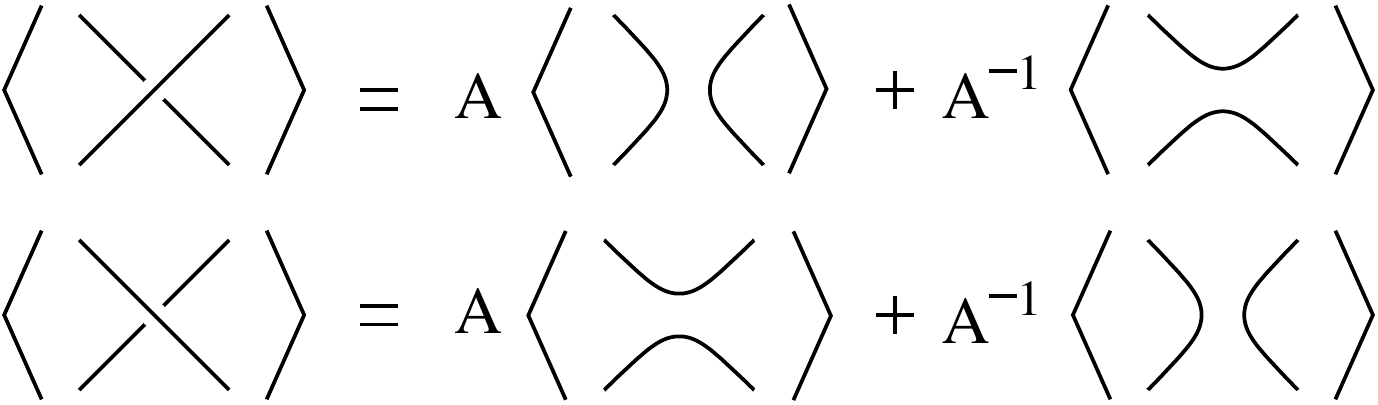}
\end{center}
\caption{Skein relation of Kauffman bracket}
\label{skeinbracket}
\end{figure}

The Kauffman bracket polynomial \cite{K&P}
for knots up to regular isotopy is defined by the skein relation (see below, and depicted in Figure~\ref{skeinbracket}).
%{  The bracket value for the unknot diagram with no crossing is set to be $1$. }
In this section we assume that $\mathbb k$ contains  $\sqrt{-1}$  and an invertible element $A$
that serves as a variable in the Kauffman bracket. 

Let $\cup$ and $\cap$ be pairing used in the bracket skein relation defined in \cite{K&P}
for basis elements $e_1$, $e_2$ of $V={\mathbb k} \oplus {\mathbb k}$ by $\cup(e_i\otimes e_i)=0$ for $i=1,2$ and 
$$\cup(e_1 \otimes e_2)=\sqrt{-1} A , \quad \cup(e_2 \otimes e_1)=- \sqrt{-1} A^{-1} . $$
The copairing is defined by 
$$\cap (1)=\sqrt{-1} A e_1 \otimes e_2 - \sqrt{-1} A^{-1} e_2 \otimes e_1. $$
The Kauffman bracket YBO  is defined by the bracket skein relation 
$ R= A \mathbb 1 + A^{-1} \cap \cup$.

\begin{lemma}
The characteristic polynomial 
of $ R= A \mathbb 1 + A^{-1} \cap \cup$ is $(\lambda -A)^3(\lambda + A^{-3} )$,
so that its eigenvalues are $\lambda=A$ with multiplicity 3, and a single root $- A^{-3}$.
\end{lemma}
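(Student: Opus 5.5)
The plan is to write $R$ as an explicit $4\times 4$ matrix in the ordered basis $e_1\otimes e_1,\ e_1\otimes e_2,\ e_2\otimes e_1,\ e_2\otimes e_2$ of $V\otimes V$ and compute its characteristic polynomial directly. Set $P:=\cap\cup$, the composite $V\otimes V\to\mathbb k\to V\otimes V$.

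\textbf{Step 1: the matrix of $P$.} Since $\cup(e_i\otimes e_i)=0$, the columns of $P$ for $e_1\otimes e_1$ and $e_2\otimes e_2$ vanish. For the other two basis vectors,
\begin{equation*}
P(e_1\otimes e_2)=\sqrt{-1}A\,\cap(1), \qquad P(e_2\otimes e_1)=-\sqrt{-1}A^{-1}\,\cap(1),
\end{equation*}
where $\cap(1)=\sqrt{-1}A\, e_1\otimes e_2-\sqrt{-1}A^{-1}e_2\otimes e_1$. So $P$ is the rank-one operator $v\mapsto \cup(v)\,\cap(1)$, supported on the middle block spanned by $e_1\otimes e_2$ and $e_2\otimes e_1$.

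\textbf{Step 2: the matrix of $R$.} On $e_1\otimes e_1$ and $e_2\otimes e_2$, $R$ acts as the scalar $A$. This gives two factors $(\lambda-A)$ of the characteristic polynomial immediately.

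\textbf{Step 3: the middle $2\times 2$ block.} On the middle block, $R=A\mathbb 1+A^{-1}uv^T$, where $u=\cap(1)$ and $v^T$ is the row vector of $\cup$. For a rank-one perturbation of a scalar, the eigenvalues are
\begin{itemize}
\item $A$, with eigenvectors the kernel of $\cup$ on the block, which is one-dimensional;
\item $A+A^{-1}\,\cup(\cap(1))$, with eigenvector $u$.
\end{itemize}
Compute the trace of $P$ as $\cup(\cap(1))$:
\begin{equation*}
\cup(\cap(1))=(\sqrt{-1}A)(\sqrt{-1}A)+(-\sqrt{-1}A^{-1})(-\sqrt{-1}A^{-1})=-A^2-A^{-2}.
\end{equation*}
This is the loop value $\delta$. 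The second eigenvalue is therefore $A+A^{-1}(-A^2-A^{-2})=-A^{-3}$.

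\textbf{Step 4: assemble, over a general ring.} The argument should not rely on eigenvector language, since $\mathbb k$ is only a ring. Instead, compute the $2\times 2$ characteristic polynomial directly: it is $\lambda^2-(\mathrm{tr})\lambda+\det$, with
\begin{equation*}
\mathrm{tr}=2A+A^{-1}\delta=A-A^{-3}.
\end{equation*}
For the determinant, use $\det(A\mathbb 1+A^{-1}uv^T)=A^2\bigl(1+A^{-2}v^Tu\bigr)=A^2+\delta=-A^{-2}$. This is the matrix determinant lemma, which is a polynomial identity and so holds over any commutative ring.

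These values match $(\lambda-A)(\lambda+A^{-3})$, whose expansion has trace $A-A^{-3}$ and determinant $-A^{-2}$. Multiplying by $(\lambda-A)^2$ from Step 2 gives $(\lambda-A)^3(\lambda+A^{-3})$.

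The only real obstacle is bookkeeping: keeping the sign conventions of $\cup$ and $\cap$ consistent so that $\delta=-A^2-A^{-2}$ comes out correctly. Everything else is routine.
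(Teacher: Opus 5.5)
Your proposal is correct and follows essentially the paper's route: the paper writes $R$ in the same ordered basis, notes that $R$ acts as $A$ on $e_1\otimes e_1$ and $e_2\otimes e_2$, and expands $\det(\lambda I-R)=(\lambda-A)^2[\lambda(\lambda-(A-A^{-3}))-A^{-2}]$. The only difference is in the middle $2\times 2$ block: you get its trace $A-A^{-3}$ and determinant $-A^{-2}$ from its rank-one structure and the loop value $\cup\cap=-A^2-A^{-2}$, whereas the paper reads them off the explicit entries $0,\ A^{-1},\ A^{-1},\ A-A^{-3}$; this is an equivalent bookkeeping choice.
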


\begin{proof}
We order the basis of $V \otimes V$ by 
$e_1 \otimes e_1$, $e_1 \otimes e_2$, $e_2 \otimes e_1$, $e_2 \otimes e_2$ in this order for vector and matrix notations below.
The YBO of the bracket, according to the skein definition, is written  as 
	\begin{eqnarray*}
		R = \begin{bmatrix}
		A & 0 & 0 & 0\\
		0 & 0 & A^{-1} & 0\\
		0 & A^{-1} & A-A^{-3} & 0\\
		0 & 0 & 0 & A
		\end{bmatrix} .
\end{eqnarray*}
The determinant of $\lambda I - R$ for the identity matrix $I$ is computed as 
$$ (\lambda - A )^2 [ \lambda (\lambda - (A-A^{-3}) ) - A^{-2}] $$
and the result follows.
\end{proof}

%	I think the following goes through ``normally'' since the ring is a PID. At least using the results on Lang's algebra book it seems that things should go through. 
\begin{lemma}\label{lem:bracketeigen}
Let $ R= A \mathbb 1 + A^{-1} \cap \cup$ be as above.
%Then the eigenspace for the eigenvalue $\lambda = A$ is spanned by 
%$$
%\begin{array}{lllll}
%u_1 & := & (1, 0, 0, 0)^T &=& e_1 \otimes e_1, \\
%u_2 & := & (0,0,0,1)^T &=& e_2 \otimes e_2, \\
%u_3 &:=& (0, 1, A^2, 0)^T &=&  e_1 \otimes e_2 + A^2 e_2 \otimes e_1,
%\end{array}
%$$
%where ${}^T$ denotes the transpose.
We have an eigendecomposition $ \langle u_1,u_2,u_3\rangle \oplus \langle u_4\rangle $ 
that is a rank $4$ free submodule of  $\oplus_4\mathbb C[A,A^{-1}]$, where 
$$
\begin{array}{lllll}
u_1 & := & (1, 0, 0, 0)^T &=& e_1 \otimes e_1, \\
u_2 & := & (0,0,0,1)^T &=& e_2 \otimes e_2, \\
u_3 &:=& (0, 1, A^2, 0)^T &=&  e_1 \otimes e_2 + A^2 e_2 \otimes e_1, 
\end{array}
$$
spans the eigenspace %are eigenvectors
 for the eigenvalue $\lambda = A$, and 
$$u_4 = (0,-A^2,1,0)^T = -A^2e_1\otimes e_2 + e_2\otimes e_1,$$
spans the 
eigenspace for the eigenvalue $\lambda = -A^{-3}$. 
\end{lemma}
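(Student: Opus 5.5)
The plan is a direct computation with the explicit matrix of $R$ from the proof of the preceding lemma. I work over $\mathbb k=\mathbb C[A,A^{-1}]$, which is an integral domain; this is the only property of $\mathbb k$ used beyond the presence of $\sqrt{-1}$. The basis is ordered $e_1\otimes e_1, e_1\otimes e_2, e_2\otimes e_1, e_2\otimes e_2$ as before. For each of the two eigenvalues I compute the kernel of $R-\lambda I$ on $\mathbb k^4$ explicitly, rather than only checking that the listed vectors are eigenvectors, because the claim is that they \emph{span} the eigenspaces over $\mathbb k$.

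For $\lambda=A$, let $x=(x_1,x_2,x_3,x_4)^T$. The first and fourth rows of $R-AI$ vanish, so $x_1$ and $x_4$ are unconstrained. The second row gives $-Ax_2+A^{-1}x_3=0$, i.e.\ $x_3=A^2x_2$. The third row gives $A^{-1}x_2-A^{-3}x_3=0$, which is the same condition multiplied by $A^{-2}$. Hence
\[
\ker(R-AI)=\{(x_1,x_2,A^2x_2,x_4)^T\}=\mathbb k u_1+\mathbb k u_2+\mathbb k u_3 ,
\]
and $u_1,u_2,u_3$ are visibly independent, so this kernel is free of rank $3$.

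For $\lambda=-A^{-3}$, the first row gives $(A+A^{-3})x_1=0$. Since $\mathbb k$ is a domain and $A+A^{-3}\neq 0$, this forces $x_1=0$, and the fourth row likewise forces $x_4=0$. The second row gives $A^{-3}x_2+A^{-1}x_3=0$, so $x_2=-A^2x_3$. The third row then gives $A^{-1}x_2+Ax_3=-Ax_3+Ax_3=0$ automatically. Thus $\ker(R+A^{-3}I)=\mathbb k u_4$.

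Finally, I show that the sum is direct and free of rank $4$. The determinant of the matrix with columns $u_1,\dots,u_4$ is
\[
1\cdot 1\cdot\det\begin{bmatrix}1&-A^2\\ A^2&1\end{bmatrix}=1+A^4 .
\]
This is nonzero in the domain $\mathbb k$, so the four vectors are $\mathbb k$-linearly independent. Therefore $\langle u_1,u_2,u_3\rangle\oplus\langle u_4\rangle$ is a free submodule of rank $4$ of $\oplus_4\mathbb k$.

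The only delicate point, which I would state explicitly, is that $1+A^4$ is not a unit in $\mathbb C[A,A^{-1}]$. So this submodule is in general a proper submodule, not all of $V\otimes V$. That is exactly why the statement says ``free submodule'' rather than claiming a decomposition of $V\otimes V$, in contrast with the diagonalizable field case of Lemma~\ref{lem:supset}. Apart from this caveat, every step is routine linear algebra over a domain.
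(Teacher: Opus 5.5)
Your proposal is correct and takes essentially the same approach as the paper: you solve $(R-\lambda I)x=0$ row by row for $\lambda=A$ and $\lambda=-A^{-3}$ using the explicit matrix. The small differences are all in your favor. You spell out that $\ker(R+A^{-3}I)=\mathbb k u_4$, using that $A+A^{-3}$ is nonzero in a domain, whereas the paper only writes the equations and asserts $u_4$ is an eigenvector; and you obtain directness and rank $4$ from $\det(u_1,u_2,u_3,u_4)=1+A^4\neq 0$ rather than from the paper's appeal to $\mathbb C[A,A^{-1}]$ being a PID, which also yields your correct remark that the sum is a proper submodule because $1+A^4$ is not a unit.
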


\begin{proof}
We compute 
\begin{eqnarray*}
	AI-	R = \begin{bmatrix}
		0 & 0 & 0 & 0\\
		0 & A & -A^{-1} & 0\\
		0 & -A^{-1} & A^{-3} & 0\\
		0 & 0 & 0 & 0
		\end{bmatrix} .
\end{eqnarray*}
Let ${\bf v}=(x_1, x_2, x_3, x_4)^T$, then $(AI-R){\bf v}=0$ gives
$$Ax_2 - A^{-1} x_3=0  \quad {\rm  and} \quad - A^{-1} x_2 + A^{-3} x_3=0, $$
 which yield the solution space spanned by vectors as stated. A direct inspection shows that the vectors are linearly independent. 
For the eigenvalue $-A^{-3}$, we obtain
$$ -A(1+A^{-4})x_1=0, \quad  - A^{-3} x_2 - A^{-1} x_3 =0, \quad - A^{-1} x_2 - A x_3 =0,  \quad -A(1+A^{-4})x_4=0 .$$
A similar computation shows that the given $u_4$ is an eigenvector associated to $\lambda = -A^{-3}$. Since $\mathbb C[A,A^{-1}]$ is a PID, it follows that submodules of $\oplus_4\mathbb C[A,A^{-1}]$ are free as well, which shows that $\langle u_1,u_2,u_3\rangle$  forms a rank 3 free submodule,  and $\langle u_4\rangle$ forms a rank 1 free submodule, that are linearly independent.
\end{proof}

\begin{lemma}\label{lem:induction}
Let $R$ be the bracket YBO as above. 
For all positive integers $n$, we have
$$ R^n=A^n \mathbb 1 + \frac{\sum_{i=1}^n(-1)^{i+1}A^{4(n-i)}}{A^{1+3(n-1)}}\cap \cup. $$
\end{lemma}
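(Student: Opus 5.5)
Write $P:=\cap\cup$, so that $R=A\mathbb 1 + A^{-1}P$. The plan is induction on $n$, and everything rests on the single identity $P^2=\delta P$ for the loop value $\delta$.

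First compute $\delta=\cup\circ\cap(1)$ from the definitions of the pairing and copairing:
$$\cup\cap(1)=\sqrt{-1}A\,\cup(e_1\otimes e_2)-\sqrt{-1}A^{-1}\cup(e_2\otimes e_1)=(\sqrt{-1}A)^2+(\sqrt{-1}A^{-1})^2=-A^2-A^{-2}.$$
Then $P^2=\cap(\cup\cap)\cup=\delta P$. As a cross-check, the matrix of $R$ in Lemma~\ref{lem:bracketeigen} gives the same value: on $u_4$ the operator $P$ acts by the scalar $A(-A^{-3}-A)=-A^{-2}-A^2$, while it kills $u_1,u_2,u_3$.

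Since $\mathbb 1$ and $P$ commute, every power has the form $R^n=A^n\mathbb 1+c_nP$ for some scalar $c_n$. Multiplying by $R$ once more and using $P^2=\delta P$ gives
$$R^{n+1}=A^{n+1}\mathbb 1+\bigl(A^{n-1}+Ac_n+A^{-1}\delta c_n\bigr)P .$$
Because $A+A^{-1}\delta=A-A-A^{-3}=-A^{-3}$, the coefficients satisfy
$$c_1=A^{-1},\qquad c_{n+1}=A^{n-1}-A^{-3}c_n .$$

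It remains to check that $c_n=A^{-(3n-2)}\sum_{i=1}^n(-1)^{i+1}A^{4(n-i)}$ solves this recursion. The base case $n=1$ gives $A^{-1}$, which is correct. For the inductive step, compute
$$-A^{-3}c_n=A^{-(3n+1)}\sum_{i=1}^n(-1)^{i}A^{4(n-i)} .$$
Now reindex with $i\mapsto i+1$: this becomes $A^{-(3n+1)}\sum_{i=2}^{n+1}(-1)^{i+1}A^{4(n+1-i)}$. The missing $i=1$ term of the target sum is $A^{-(3n+1)}A^{4n}=A^{n-1}$, and this is exactly the first term of the recursion. Adding it in gives the formula for $c_{n+1}$, whose exponent is $1+3n$, as required.

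The only real obstacle is bookkeeping: keeping the signs of $\sqrt{-1}$ straight in computing $\delta$, and handling the index shift in the sum. Since the matrix check gives an independent verification of $\delta$, the value $\delta=-A^2-A^{-2}$ is safe.
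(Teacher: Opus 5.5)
Your proof is correct and follows the paper's argument: induction on $n$, using $\cup\cap=-A^2-A^{-2}$ to reduce $R^{n+1}=R\cdot R^n$ to the scalar recursion $c_{n+1}=A^{n-1}-A^{-3}c_n$, which the paper likewise checks against the closed form. Your explicit reindexing $i\mapsto i+1$ just spells out the step the paper calls ``direct inspection.''
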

\begin{proof}
		We proceed by induction on $n$. For $n=1$ one sees that the formula gives $R = A\mathbb 1 + A^{-1}\cap \cup$ as required. Suppose now that the formula holds true for some $n$.  Recall that 
		 $\cup\cap = -A^2-A^{-2}$, which can be checked directly using $\cup$ and $\cap$ given above. Then, we have
		\begin{eqnarray*}
				R^{n+1} &=& (A\mathbb 1 + A^{-1}\cap\cup)(A^n \mathbb 1 + \frac{\sum_{i=1}^n(-1)^{i+1}A^{4(n-i)}}{A^{1+3(n-1)}}(\cap \cup ))\\
				&=& A^{n+1}\mathbb 1 + A\frac{\sum_{i=1}^n(-1)^{i+1}A^{4(n-i)}}{A^{1+3(n-1)}}\cap \cup  + A^{n-1}\cap\cup + A^{-1}\frac{\sum_{i=1}^n(-1)^{i+1}A^{4(n-i)}}{A^{1+3(n-1)}}
				\cap\cup \cap\cup \\
				&=& A^{n+1}\mathbb 1 + [A\frac{\sum_{i=1}^n(-1)^{i+1}A^{4(n-i)}}{A^{1+3(n-1)}} + A^{n-1} + A^{-1}\frac{\sum_{i=1}^n(-1)^{i+1}A^{4(n-i)}}{A^{1+3(n-1)}}(-A^2-A^{-2})] \cap\cup. 
		\end{eqnarray*}
		Therefore, the inductive step is completed once we show the equality 
		\begin{eqnarray}\label{eqn:inductive_step}
		\begin{aligned}
			\lefteqn{\frac{\sum_{i=1}^{n+1}(-1)^{i+1}A^{4(n+1-i)}}{A^{1+3n}}}\\	&=& A\frac{\sum_{i=1}^n(-1)^{i+1}A^{4(n-i)}}{A^{1+3(n-1)}} + A^{n-1} + A^{-1}\frac{\sum_{i=1}^n(-1)^{i+1}A^{4(n-i)}}{A^{1+3(n-1)}}(-A^2-A^{-2}).
		\end{aligned}
		\end{eqnarray}
		This is seen to hold by direct computation. In fact, % for 
		the 	
		right-hand-side is equal to %we have
%	\begin{eqnarray*}
%				\lefteqn{A\frac{\sum_{i=1}^n(-1)^{i+1}A^{4(n-i)}}{A^{1+3(n-1)}} + A^{n-1} + A^{-1}\frac{\sum_{i=1}^n(-1)^{i+1}A^{4(n-i)}}{A^{1+3(n-1)}}(-A^2-A^{-2})}\\
%				&=& 
\begin{eqnarray*}
{ } & & A\frac{\sum_{i=1}^n(-1)^{i+1}A^{4(n-i)}}{A^{1+3(n-1)}} + A^{n-1} - A\frac{\sum_{i=1}^n(-1)^{i+1}A^{4(n-i)}}{A^{1+3(n-1)}} - \frac{\sum_{i=1}^n(-1)^{i+1}A^{4(n-i)}}{A^{1+3(n-1)+3}  }  \\
				&=& A^{n-1}-\frac{\sum_{i=1}^n(-1)^{i+1}A^{4(n-i)}}{A^{1+3n}}, 
		\end{eqnarray*}
		and we can see by direct inspection that 
		\begin{eqnarray*}
		A^{n-1}-\frac{\sum_{i=1}^n(-1)^{i+1}A^{4(n-i)}}{A^{1+3n}} &=& \frac{\sum_{i=1}^{n+1}(-1)^{i+1}A^{4(n+1-i)}}{A^{1+3n}}
		\end{eqnarray*} 
		therefore showing that \eqref{eqn:inductive_step} holds, and consequently completing the proof of the lemma. 
\end{proof}

For concrete computations of examples below, we provide the following lemmas.

\begin{lemma}\label{lem:VEA}
The images 
$(\cap\cup \otimes \mathbb 1) (V \otimes E_A) =(\cap\cup \otimes \mathbb 1) \langle e_i \otimes u_j \rangle_{i=1,2, j=1,2,3}$ are computed as 
\begin{eqnarray*}
 e_1 \otimes u_1 &=& e_1 \otimes e_1 \otimes e_1
 \stackrel{ (\cap\cup \otimes \mathbb 1) }{\longrightarrow}
 0,\\
e_1 \otimes u_2 &=& e_1 \otimes e_2 \otimes e_2 
 \stackrel{ (\cap\cup \otimes \mathbb 1) }{\longrightarrow}
  u_{12}   := -A^2 e_1 \otimes e_2 \otimes e_2 + e_2 \otimes e_1 \otimes e_2   , \\
e_1 \otimes u_3 &=&  e_1 \otimes ( e_1 \otimes e_2 + A^2 e_2 \otimes e_1) 
 \stackrel{ (\cap\cup \otimes \mathbb 1) }{\longrightarrow}
 u_{13}   :=  -A^4 e_1 \otimes e_2 \otimes e_1 + A^2  e_2 \otimes e_1 \otimes e_1 ,  \\
  e_2 \otimes u_1 &=& e_2 \otimes  e_1 \otimes e_1
 \stackrel{ (\cap\cup \otimes \mathbb 1) }{\longrightarrow}
 e_1 \otimes e_2 \otimes e_1 - A^{-2} e_2 \otimes e_1 \otimes e_1 = - A^{-4} u_{13}, \\
 e_2 \otimes u_2 &=&e_2 \otimes e_2 \otimes e_2
 \stackrel{ (\cap\cup \otimes \mathbb 1) }{\longrightarrow}
 0,  \\
 e_2 \otimes u_3 &=& e_2 \otimes ( e_1 \otimes e_2 + A^2 e_2 \otimes e_1) 
 \stackrel{ (\cap\cup \otimes \mathbb 1) }{\longrightarrow}
e_1 \otimes e_2 \otimes e_2 - A^{-2} e_2 \otimes e_1 \otimes e_2 = -A^{-2} u_{12}.
\end{eqnarray*}
In particular, the submodule $(\cap\cup \otimes \mathbb 1) (V \otimes E_A) $ is spanned by 
$u_{12}$ and $u_{13}$. 
%\begin{eqnarray*}
%& & u_{12} := -A^2 e_1 \otimes e_2 \otimes e_2 + e_2 \otimes e_1 \otimes e_2, \\
%& & u_{13} :=  -A^4 e_1 \otimes e_2 \otimes e_1 + A^2  e_2 \otimes e_1 \otimes e_1 .
%%& & u_{23} := e_1 \otimes e_2 \otimes e_2 - A^{-2} e_2 \otimes e_1 \otimes e_2 .
%\end{eqnarray*}
\end{lemma}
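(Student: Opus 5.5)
The proof is a direct computation. First I would find the $4\times 4$ matrix of $\cap\cup$ on $V\otimes V$ from the definitions of $\cup$ and $\cap$. Then I would apply $\cap\cup\otimes\mathbb 1$ term by term to each of the six spanning vectors $e_i\otimes u_j$, after expanding them as combinations of the basis tensors $e_a\otimes e_b\otimes e_c$.

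The first step is to evaluate $\cap\cup$ on the basis $e_a\otimes e_b$. Since $\cup(e_1\otimes e_1)=\cup(e_2\otimes e_2)=0$, both $e_1\otimes e_1$ and $e_2\otimes e_2$ are sent to $0$. Using $\cup(e_1\otimes e_2)=\sqrt{-1}A$, $\cup(e_2\otimes e_1)=-\sqrt{-1}A^{-1}$ and $\cap(1)=\sqrt{-1}A\, e_1\otimes e_2-\sqrt{-1}A^{-1}e_2\otimes e_1$, together with $(\sqrt{-1})^2=-1$, one gets
\begin{eqnarray*}
\cap\cup(e_1\otimes e_2) &=& -A^2 e_1\otimes e_2 + e_2\otimes e_1,\\
\cap\cup(e_2\otimes e_1) &=& e_1\otimes e_2 - A^{-2} e_2\otimes e_1 .
\end{eqnarray*}
As a consistency check, this agrees with the middle block of $R=A\mathbb 1+A^{-1}\cap\cup$ in the matrix displayed earlier. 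It also shows directly that $\cap\cup(e_2\otimes e_1)=-A^{-2}\,\cap\cup(e_1\otimes e_2)$.

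Next I would apply $\cap\cup\otimes\mathbb 1$, which acts only on the first two tensor factors, to each $e_i\otimes u_j$. Any term whose first two factors are $e_1\otimes e_1$ or $e_2\otimes e_2$ vanishes. This immediately gives $0$ for $e_1\otimes u_1$ and $e_2\otimes u_2$. It also kills one of the two terms in each of $e_1\otimes u_3$ and $e_2\otimes u_3$. The surviving terms are handled by the two formulas above, tensored with the third factor. For example, $e_1\otimes u_3$ reduces to $A^2(e_1\otimes e_2)\otimes e_1$, whose image is $u_{13}$. Likewise $e_2\otimes u_1=(e_2\otimes e_1)\otimes e_1$ has image $(e_1\otimes e_2-A^{-2}e_2\otimes e_1)\otimes e_1$, and comparing coefficients shows this equals $-A^{-4}u_{13}$. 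The remaining two cases are checked the same way.

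The final claim follows at once: the image of a spanning set spans the image module, and every image computed above is a scalar multiple of $u_{12}$ or $u_{13}$. There is no real obstacle here. The only care needed is the sign bookkeeping with $\sqrt{-1}$ and the powers of $A$. I would double-check the scalar relations $-A^{-4}$ and $-A^{-2}$ by matching the coefficients of each basis tensor.
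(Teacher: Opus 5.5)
Your proposal is correct and matches the paper's proof: compute $\cap\cup$ on the basis of $V\otimes V$ (you get the same two nonzero formulas), then apply $\cap\cup\otimes\mathbb 1$ to each $e_i\otimes u_j$ using the eigenbasis from Lemma~\ref{lem:bracketeigen}. The spanning claim follows as you say, because $u_{12}$ and $u_{13}$ themselves occur as images and the remaining images are scalar multiples of them.
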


\begin{proof}
One computes that 
\begin{eqnarray*}
(\cap\cup) (e_1 \otimes e_2) &=& -A^2 e_1 \otimes e_2 + e_2 \otimes e_1, \\
(\cap\cup) (e_2 \otimes e_1) &=&  e_1 \otimes e_2 - A^{-2}  e_2 \otimes e_1, 
\end{eqnarray*}
and other basis terms  vanish. 
%Note that $V \otimes E_{A} = \langle  e_i \otimes u_j \rangle_{i=1,2, \ j=1,2,3}$.
Using 
 Lemma~\ref{lem:bracketeigen} we obtain the images as stated.
\end{proof}

Similar computations imply the following for $\lambda=-A^{-3}$.

\begin{lemma}\label{lem:VEA-3}
The images 
$(\cap\cup \otimes \mathbb 1) (V \otimes E_{A^{-3} } )=(\cap\cup \otimes \mathbb 1) \langle e_i \otimes u_4 \rangle_{i=1,2}$ are computed as 
\begin{eqnarray*}
 e_1 \otimes u_4 &=& -A^2 e_1 \otimes e_1 \otimes e_2 + e_1 \otimes e_2 \otimes e_1 
 \stackrel{ (\cap\cup \otimes \mathbb 1) }{\longrightarrow}
 -A^2 e_1 \otimes e_2 \otimes e_1  + e_2 \otimes e_1 \otimes e_1 ,\\
 e_2 \otimes u_4 &=& -A^2 e_2 \otimes e_1 \otimes e_2 + e_2 \otimes e_2 \otimes e_1 
 \stackrel{ (\cap\cup \otimes \mathbb 1) }{\longrightarrow}
 -A^2 e_1 \otimes e_2 \otimes e_2  + e_2 \otimes e_1 \otimes e_2 .
\end{eqnarray*}
\end{lemma}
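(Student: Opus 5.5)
The statement is a direct computation, so I plan to verify it by applying $\cap\cup \otimes \mathbb 1$ to the two spanning vectors $e_1 \otimes u_4$ and $e_2 \otimes u_4$. First, by Lemma~\ref{lem:bracketeigen}, $E_{-A^{-3}}$ is spanned by $u_4 = -A^2 e_1\otimes e_2 + e_2\otimes e_1$. Hence $V \otimes E_{-A^{-3}}$ is spanned by $e_i \otimes u_4$ for $i=1,2$. Expanding gives
$e_1\otimes u_4 = -A^2 e_1\otimes e_1\otimes e_2 + e_1\otimes e_2\otimes e_1$ and
$e_2\otimes u_4 = -A^2 e_2\otimes e_1\otimes e_2 + e_2\otimes e_2\otimes e_1$.

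Next I apply $\cap\cup\otimes\mathbb 1$ term by term, acting on the first two tensor factors. The only inputs needed are the values already recorded in the proof of Lemma~\ref{lem:VEA}:
$(\cap\cup)(e_1\otimes e_2) = -A^2 e_1\otimes e_2 + e_2\otimes e_1$,
$(\cap\cup)(e_2\otimes e_1) = e_1\otimes e_2 - A^{-2} e_2\otimes e_1$,
and $(\cap\cup)(e_i\otimes e_i)=0$, since $\cup(e_i \otimes e_i)=0$. The terms $e_1\otimes e_1\otimes e_2$ and $e_2\otimes e_2\otimes e_1$ therefore vanish.

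What remains is $(\cap\cup)(e_1\otimes e_2)\otimes e_1$ in the first case and $-A^2(\cap\cup)(e_2\otimes e_1)\otimes e_2$ in the second.

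\begin{itemize}
\item The first gives $-A^2 e_1\otimes e_2\otimes e_1 + e_2\otimes e_1\otimes e_1$, which is the claimed value.
\item The second gives $-A^2 e_1\otimes e_2\otimes e_2 + e_2\otimes e_1\otimes e_2$, again the claimed value. It coincides with $u_{12}$ of Lemma~\ref{lem:VEA}.
\end{itemize}

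There is no genuine obstacle here. The only point needing care is tracking the powers of $A$ in the second computation, where $-A^2\cdot(-A^{-2})=1$. A consistency check is to note that the first image equals $A^{-2}u_{13}$. This confirms the images lie in the span found in Lemma~\ref{lem:VEA}, which is useful for the later intersection computations.
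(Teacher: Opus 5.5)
Your proposal is correct and takes essentially the same route as the paper, which only says the result follows from computations similar to those in Lemma~\ref{lem:VEA}: you apply the basis values of $\cap\cup$ to each term of $e_i\otimes u_4$, and the two resulting images are exactly as claimed. Your consistency check is also right: the images equal $A^{-2}u_{13}$ and $u_{12}$ respectively.
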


\begin{figure}[htb]
\begin{center}
\includegraphics[width=1.8in]{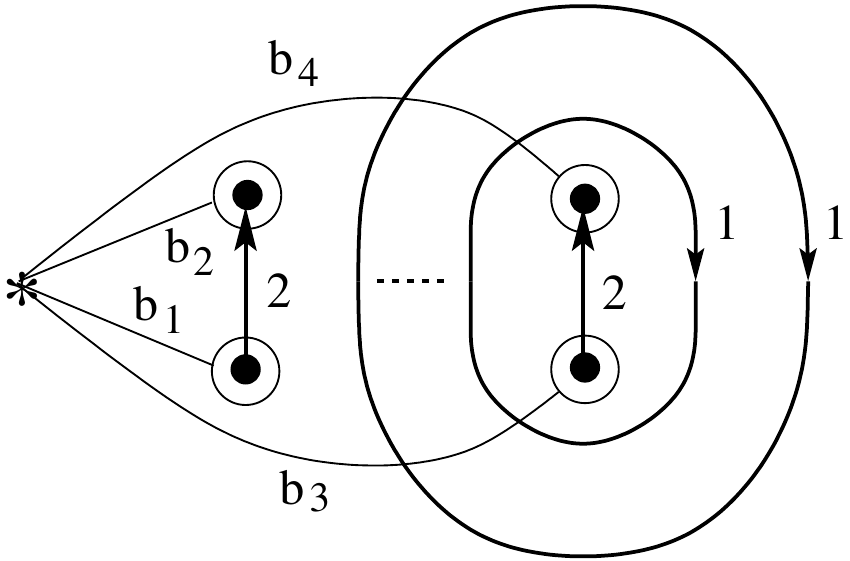}
\end{center}
\caption{Charts of spun $T(2, \ell)$ torus knots}
\label{spunT}
\end{figure}

Recall that a braid system of a spun $(2, - \ell)$-torus knot, for a positive integer $\ell$,  is given by 
\begin{equation}
\vec{b}=(b_1, b_2, b_3, b_4)=( \sigma_2, \sigma_2^{-1}, \sigma_1^{\ell} \sigma_2  \sigma_1^{- \ell}, \sigma_1^{\ell} \sigma_2^{-1}  \sigma_1^{-\ell} ). \label{eqn:tknot}
\end{equation}
 The negative sign in $(2, -\ell)$ is  for our convenience for computations below. 
 For $(2,\ell)$, the signs of exponents involving $\ell$ are all reversed for the braid system.

For a  surface braid whose closure is a  
spun trefoil, for example,  a  chart consists of 
two free edges of label 2, one outside and the other inside of  three circles of label 1.
(Section 29.4 in \cite{Kamada:SB}, Figure 29.3.)
Its braid system is given by
$$\vec{b}=(b_1, b_2, b_3, b_4)=( \sigma_2, \sigma_2^{-1}, \sigma_1^{3} \sigma_2  \sigma_1^{-3}, \sigma_1^{3} \sigma_2^{-1}  \sigma_1^{-3} ). $$
The signs of $\ell$ are again reversed comparing to  \cite{Kamada:SB}.
This is reflected by reversed orientation of nested circles labeled by 1.

\begin{proposition}\label{prop:twistspun}
For the braid system \eqref{eqn:tknot} 
 of the surface braid whose closure 
 represents a spun $(2,-\ell)$-torus knot, the submodule
			$E_{(\vec{B}_\ell, A)}$ has rank $4$ for each $\ell$, and they are therefore mutually isomorphic. 
			However, for  
			 $\ell_1 \neq \ell_2$ we have that $E_{(\vec{B}_{\ell_1}, A)} \neq E_{(\vec{B}_{\ell_2}, A)}$. The module $E_{(\vec{B}_\ell, -A^{-3})}$ is trivial. 
\end{proposition}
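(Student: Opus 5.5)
The plan is to compute $E_{(\vec{B}_\ell,\lambda)}=\bigcap_{i=1}^4 W_i^{-1}(E_{(k_i,\lambda)})$ directly. For all four entries of \eqref{eqn:tknot} we have $k_i=2$. For $b_1=\sigma_2$ and $b_2=\sigma_2^{-1}$ we have $W_i=\mathbb 1$, and since $E_\lambda^{-1}=E_\lambda$, both contribute $V\otimes E_\lambda$. For $b_3$ and $b_4$ we write $b=w^{-1}\sigma_2^{\pm1}w$ with $w=\sigma_1^{-\ell}$, so $W^{-1}=R^{\ell}\otimes\mathbb 1$; both contribute $(R^\ell\otimes \mathbb 1)(V\otimes E_\lambda)$. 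Hence
$$E_{(\vec B_\ell,\lambda)}=(V\otimes E_\lambda)\cap (R^\ell\otimes\mathbb 1)(V\otimes E_\lambda).$$
By Lemma~\ref{lem:induction}, $R^\ell\otimes \mathbb 1=A^\ell\,\mathbb 1+c_\ell\,T$, where $T=\cap\cup\otimes\mathbb 1$ and $c_\ell=\sum_{i=1}^\ell(-1)^{i+1}A^{4(\ell-i)}/A^{1+3(\ell-1)}$ is a nonzero Laurent polynomial. So an element of the second module has the form $A^\ell v+c_\ell Tv$ with $v\in V\otimes E_\lambda$, and it lies in $V\otimes E_\lambda$ if and only if $c_\ell Tv\in V\otimes E_\lambda$.

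For $\lambda=A$, Lemma~\ref{lem:VEA} shows that $T(V\otimes E_A)=\langle u_{12},u_{13}\rangle$. Writing a combination $a u_{12}+bu_{13}$ in the form $e_1\otimes(\cdot)+e_2\otimes(\cdot)$ and requiring both slots to lie in $E_A=\langle u_1,u_2,u_3\rangle$ forces $a=b=0$, because a $u_3$-component needs the $e_1\otimes e_2$ and $e_2\otimes e_1$ coefficients to be in ratio $1:A^2$. Hence $\langle u_{12},u_{13}\rangle\cap (V\otimes E_A)=0$, and since $c_\ell\neq 0$ in the domain $\mathbb C[A,A^{-1}]$, the condition forces $Tv=0$. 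Again from Lemma~\ref{lem:VEA}, $\ker T|_{V\otimes E_A}$ is the free module spanned by
$$e_1\otimes u_1,\quad e_2\otimes u_2,\quad e_2\otimes u_1+A^{-4}e_1\otimes u_3,\quad e_2\otimes u_3+A^{-2}e_1\otimes u_2.$$
These are linearly independent, so the intersection has rank $4$, which gives the isomorphism claim. For $\lambda=-A^{-3}$, the same argument with Lemma~\ref{lem:VEA-3} applies. The two images $T(e_i\otimes u_4)$ span a rank-2 module meeting $V\otimes\langle u_4\rangle$ trivially, by a coefficient check in the $e_1\otimes e_2\otimes e_1$ and $e_2\otimes e_1\otimes e_1$ slots. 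Moreover $T$ is injective on $V\otimes\langle u_4\rangle$, so the intersection is $0$.

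The main obstacle is the claim $E_{(\vec B_{\ell_1},A)}\neq E_{(\vec B_{\ell_2},A)}$. The computation above, taken at face value, produces $A^\ell\ker T=\ker T$, which is independent of $\ell$ because $A$ is a unit. So distinguishing the $\ell$'s must come from a point I am treating too loosely. I would first recheck the exact conjugating words $W_i$, including the orientation conventions in Figure~\ref{spunT} and whether $b_3$ uses $W^{-1}$ or $W$. I would also check whether the intersection should be taken inside $\mathbb C[A,A^{-1}]^8$ without clearing the factor $c_\ell$, since $c_\ell$ is not a unit. Concretely, the set $\{v: c_\ell Tv\in V\otimes E_A\}$ agrees with $\ker T$ only because the ambient module is torsion-free. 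The $\ell$-dependence, if genuine, should therefore appear as an explicit generator involving $c_\ell$ or $A^{\ell}$ that is not a unit multiple of the others. I would compute the case $\ell=1$ against $\ell=2$ explicitly to locate it before writing the general argument.
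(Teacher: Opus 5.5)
Your rank-$4$ computation for $\lambda=A$ and your triviality argument for $\lambda=-A^{-3}$ are correct. They also take a cleaner route than the paper, which expands $\sum\alpha_{ij}\,e_i\otimes u_j=\sum\beta_{mn}w_{mn}$ into eight coordinate equations and solves them.

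Your splitting $R^\ell\otimes\mathbb 1=A^\ell\,\mathbb 1+p_\ell T$ (your $c_\ell$), together with $T(V\otimes E_\lambda)\cap(V\otimes E_\lambda)=0$, reduces everything to $\ker T$. It also explains the answer. Since $u_{12}=u_4\otimes e_2$ and $u_{13}=A^2\,u_4\otimes e_1$, one has $T(V\otimes E_A)=E_{-A^{-3}}\otimes V$, while $\ker T=E_A\otimes V$. Hence $E_{(\vec B_\ell,A)}=(E_A\otimes V)\cap(V\otimes E_A)$, the common $A$-eigenspace of $R_{(1)}$ and $R_{(2)}$.

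One small repair in the $-A^{-3}$ case. To get $T(V\otimes E_{-A^{-3}})\cap(V\otimes E_{-A^{-3}})=0$, use the $e_2\otimes e_1\otimes e_1$ and $e_1\otimes e_2\otimes e_2$ coordinates. These vanish on $V\otimes\langle u_4\rangle$ and equal $a$ and $-A^2b$ on $a\,u_4\otimes e_1+b\,u_4\otimes e_2$. The two slots you name leave $b$ unconstrained.

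The part you leave open, $E_{(\vec B_{\ell_1},A)}\neq E_{(\vec B_{\ell_2},A)}$, cannot be completed. Your argument is sound and shows the module is independent of $\ell$, so this assertion is false as stated.

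Nothing hinges on the points you plan to recheck. Using $R^{-\ell}=A^{-\ell}\,\mathbb 1+q_\ell\,\cap\cup$ with some nonzero $q_\ell$ instead of $R^\ell$ gives the same conclusion. Torsion-freeness of $\mathbb C[A,A^{-1}]^{8}$ is exactly what legitimately lets you cancel $p_\ell$.

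The paper's proof reaches distinctness through an algebra slip. It correctly derives $\beta_{21}=A^4\beta_{13}$ and $\beta_{12}=A^{-2}\beta_{23}$. It then declares the intersection spanned by $w_{11},w_{22},w_{13},w_{23}$, which silently sets $\beta_{21}=\beta_{12}=0$. The generators for $\beta_{13}=1$ and $\beta_{23}=1$ are actually
\[
\begin{aligned}
w_{13}+A^4w_{21}&=A^{\ell}\,\bigl(e_1\otimes e_1\otimes e_2+A^2\,e_1\otimes e_2\otimes e_1+A^4\,e_2\otimes e_1\otimes e_1\bigr),\\
w_{23}+A^{-2}w_{12}&=A^{\ell-2}\,\bigl(e_1\otimes e_2\otimes e_2+A^2\,e_2\otimes e_1\otimes e_2+A^4\,e_2\otimes e_2\otimes e_1\bigr).
\end{aligned}
\]
These are unit multiples of $\ell$-independent vectors, and they match your kernel generators up to units. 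By contrast, $w_{13}$ is not even in $V\otimes E_A$: its $e_1\otimes(\cdot)$ part has $e_2\otimes e_1$-coefficient $A^{\ell+2}-A^4p_\ell\neq A^2\cdot A^{\ell}$. So the paper's comparison of $\gamma^1_{13},\gamma^2_{13}$ through the $(112)$ and $(211)$ coordinates concerns vectors outside the intersection.

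An independent check confirms this. A common $A$-eigenvector of $R_{(1)}$ and $R_{(2)}$ is an eigenvector of $R_{(1)}^{\ell}R_{(2)}^{\pm1}R_{(1)}^{-\ell}$ for $A^{\pm1}$. So $(E_A\otimes V)\cap(V\otimes E_A)\subseteq E_{(\vec B_\ell,A)}$ for every $\ell$. Both sides have rank $4$, and the left side is a kernel, hence saturated, so they coincide.

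You should therefore record the rank-$4$ and $-A^{-3}$ statements as proved and the distinctness statement as refuted. There is no hidden $\ell$-dependent generator to look for.
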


\begin{proof}
We determine $E_{(\vec{B}, \lambda)}$ for $\lambda=A$ from Lemma~\ref{lem:bracketeigen}.
We use basis eigenvactors $u_1:=(1, 0, 0, 0)^T$, $u_2:=(0,0,0,1)^T$ and $u_3:=(0, 1, A^2, 0)^T$ 
for $\lambda=A$ as in Lemma~\ref{lem:bracketeigen}.

For $B_1=\rho(b_1)= \rho(\sigma_2)=\mathbb 1 \otimes R$, from Lemma~\ref{lem:bracketeigen}
we have 
$$E(B_1, A) \supset E_{(B_1, \lambda)}=V \otimes E_{A} = \langle 
e_i \otimes u_j \rangle_{i=1,2, \ j=1,2,3} .
%e_2 \otimes u_1, e_2 \otimes u_2, e_2 \otimes u_3  \rangle . 
$$
For
$B_2=\rho(b_2)=\rho(\sigma_2^{-1})=\mathbb 1 \otimes R^{-1}$
we obtain $ E_{(B_2, A) } = E_{ (B_1, A) } $  
since the eigenspace of the inverse matrix $R^{-1}$ for $\lambda^{-1}$ is the same as the eigenspace of $R$ for $\lambda$. 

Next we compute $E_{(B_3, A)}$. By Lemma~\ref{lem:conjeigen} we have
$$E_{(B_3, A)}= W_3 (E_{(2,A)})=(R \otimes \mathbb 1)^\ell (V \otimes E_A)=(R^\ell \otimes \mathbb 1)(V \otimes E_A)  . $$
Let  
$p_\ell = \frac{\sum_{i=1}^\ell(-1)^{i+1}A^{4(\ell -i)}}{A^{1+3(\ell -1)}}$. 
By Lemmas~ \ref{lem:induction},  %and  \ref{lem:VEA},  
we compute
\begin{eqnarray*}
E_{(B_3, A)}= E_{(B_4, A)}=  (R^\ell \otimes \mathbb 1) (V \otimes E_A) 
&=& [  \ A^\ell \mathbb 1^{\otimes 3}  + p_\ell   (\cap \cup \otimes \mathbb 1) \  ]  (V \otimes E_A) \\
&=& [ \ A^\ell  \mathbb 1^{\otimes 3}+ p_\ell   (\cap \cup \otimes \mathbb 1) \  ]\langle e_i \otimes u_j \rangle_{i=1,2, \ j=1,2,3}  .
\end{eqnarray*}
The image of each term is computed using  Lemma~\ref{lem:VEA} as follows.
\begin{eqnarray*}
e_1 \otimes u_1 & \mapsto & A^\ell\  e_1 \otimes e_1 \otimes e_1 , \\
e_1 \otimes u_2 & \mapsto & A^\ell \ e_1 \otimes e_2 \otimes e_2 + p_\ell \ u_{12} \\
& & = (A^\ell - A^2  p_\ell ) e_1 \otimes e_2 \otimes e_2 +  p_\ell  \ e_2 \otimes e_1 \otimes e_2 , \\
e_1 \otimes u_3 & \mapsto & A^\ell (  e_1 \otimes e_1 \otimes e_2 + A^2 \  e_1 \otimes e_2 \otimes e_1) 
+ p_\ell\  u_{13} \\
& & = A^\ell \   e_1 \otimes e_1 \otimes e_2 + ( A^{\ell +2} - A^4 p_\ell )   e_1 \otimes e_2 \otimes e_1
+ A^2 p_\ell  \   e_2 \otimes e_1 \otimes e_1 , \\
e_2 \otimes u_1 & \mapsto & A^\ell \ e_2 \otimes e_1 \otimes e_1 - A^{-4} p_\ell \ u_{13} \\
& & =  p_\ell \ e_1 \otimes e_2 \otimes e_1 + (A^\ell - A^{-2} p_\ell) e_2 \otimes e_1 \otimes e_1 , \\
e_2 \otimes u_2 & \mapsto & A^\ell \ e_2 \otimes e_2 \otimes e_2 , \\
e_2 \otimes u_3 & \mapsto & A^\ell \ e_2 \otimes e_1 \otimes e_2 + A^{\ell +2}\  e_2 \otimes e_2 \otimes e_1
- A^{-2} p_\ell  \ u_{12} \\
& & = p_\ell\  e_1 \otimes e_2 \otimes e_2 + (A^\ell - A^{-2} p_\ell ) e_2 \otimes e_1 \otimes e_2
+ A^{\ell +2} \ e_2 \otimes e_2 \otimes e_1 .
\end{eqnarray*}
Denote these images of $e_i \otimes u_j$ by $w_{ij}$, $i=1,2, j=1,2,3$. 
Since $R^\ell \otimes \mathbb1$ is bijective, its image $\{ w_{ij} \}$ forms a rank 6 submodule.

	Since we have
			\begin{eqnarray*}
				 \rank (E(B_1,A) + E(B_3,A)) &=& \rank E(B_1,A) + \rank E(B_3,A) - \rank (E(B_1,A)\cap E(B_3,A)), 
			\end{eqnarray*}
			where $\rank E(B_1,A) = \rank E(B_3,A) = 6$, we find that $\rank (E(B_1,A)\cap E(B_3,A)) \geq 4$, since $8 = \rank V^{\otimes 3}  \geq \rank (E(B_1,A) + E(B_3,A))$. 
			
			We determine the intersection by direct computations. For a vector $x$ to be an element of the intersection, we  find coefficients $\alpha_{ij}$ and $\beta_{mn}$ such that 
			\begin{eqnarray*}
					\sum_{i=1}^2\sum_{j=1}^3 \alpha_{ij} e_i\otimes u_j &=& \sum_{m=1}^2 \sum_{n=1}^3 \beta_{mn} w_{mn}.  
			\end{eqnarray*} 
			Expanding the vectors, and equating each component for each $e_a\otimes e_b\otimes e_c$, we obtain the system for the coefficients $\alpha_{ij}$ and $\beta_{mn}$ as follows, 
			where the left column  $(abc)$ indicates the coefficients of $e_a\otimes e_b\otimes e_c$.
$$\begin{array}{lrcl}
(111) & \alpha_{11} & = & A^\ell \beta_{11} \\
(112) & \alpha_{13} & = & A^\ell \beta_{13} \\
(121) & A^2 \alpha_{13} & = & (A^{\ell + 2} - A^4 p_\ell ) \beta_{13} + p_\ell \beta_{21}   \\
(122) & \alpha_{12} & = &  (A^\ell  - A^2 p_\ell ) \beta_{12} + p_\ell \beta_{23}   \\
(211) & \alpha_{21} & = & A^2 p_\ell  \beta_{13}  + (A^\ell  - A^{-2} p_\ell ) \beta_{21} \\
(212) & \alpha_{23} & = &  p_\ell  \beta_{12}  + (A^\ell  - A^{-2} p_\ell ) \beta_{23} \\
(221) & A^2 \alpha_{23} & = & A^{\ell+2}  \beta_{23} \\
(222) & \alpha_{22} & = & A^\ell \beta_{22}
\end{array}
	$$	
We have that $\alpha_{ij}$   are written by $\beta_{ij}$ alone for $(i,j)=(1, 1), (1,3), (2,2) $ and $ (2,3)$.
The equations for $(121)$ and $(212)$
yield
$$\begin{array}{lrclrcl}
(121) & A^{\ell +2}  \beta_{13} & = & (A^{\ell + 2} - A^4 p_\ell ) \beta_{13} + p_\ell \beta_{21} , & \beta_{21}&=&A^4 \beta_{13} ,  \\
(212) & A^\ell \beta_{23} & = &  p_\ell  \beta_{12}  + (A^\ell  - A^{-2} p_\ell ) \beta_{23},
& \beta_{12} &=& A^{-2} \beta_{23} . \\
\end{array} $$
The remaining equations $(122)$ and $(211)$ provide 
$\alpha_{12} = A^{\ell-2} \beta_{23}$ and $\alpha_{21}=A^{\ell + 4} \beta_{13}$. 
To find a basis, we set 
$\beta_{11}=\beta_{22}=\beta_{13}=\beta_{23}=1$ and find 
that the intersection $E(B_1, A) \cap E(B_3, A)$ is spanned by 
$\{ w_{11}, w_{22}, w_{13}, w_{23} \}$. 
%Since the rank of the intersection is at least 4, this provides the basis for the intersection.
Since these terms contain unique basis 
$e_1 \otimes e_1 \otimes e_1$, $e_2 \otimes e_2 \otimes e_2$, $e_2 \otimes e_1 \otimes e_1$, 
and $e_2 \otimes e_2 \otimes e_1$, respectively, they form a rank 4 submodule, which agrees with the computation
$\rank (E(B_1,A)\cap E(B_3,A)) \geq 4$. 

We show that $E_{(\vec{B}_{\ell_1}, A)}\neq E_{(\vec{B}_{\ell_2}, A) }$ if $\ell_1 \neq \ell_2$. 
We note that for this statement we identify the whole module  $V^{\otimes n} $ that contains these eigenspaces for 
the two surface braids in question. This is done by specifying the basis $(e_i)$ of $V$ that determines basis of $V^{\otimes n} $ as in computations below. 

Set 
$$E_{(\vec{B}_{\ell_1}, A)} = \sum_{(i,j)} \gamma^1_{ij} w_{ij}  \quad {\rm and} \quad  E_{(\vec{B}_{\ell_2}, A)} =\sum_{(i,j)} \gamma^2_{ij} w_{ij}  . $$
Assume that $\ell_1 \neq \ell_2$ and $E_{(\vec{B}_{\ell_1}, A)} = E_{(\vec{B}_{\ell_2}, A)}$.
Then for $w_{13}$, 
 by comparing the coefficients of $e_a \otimes e_b \otimes e_c$ for $(a,b,c)=(1,1,2)$ and $(2,1,1)$, we obtain 
$$
\begin{array}{lrcl}
 (112) &A^{\ell_1} \  \gamma^1_{13} & = & A^{\ell_2} \  \gamma^2_{13} , \\
%(121) &  \gamma^1_{13}  (A^{\ell_1 +2} - A^4 p_\ell) &=& \gamma^2_{13}  (A^{\ell_2 +2} - A^4 p_\ell) ,\\
(211) &   A^2 p_{\ell_1} \ \gamma^1_{13}  &=&  A^2 p_{\ell_2}   \  \gamma^2_{13}  .  
\end{array}
$$
From the equation for $(211)$, we obtain $\gamma^1_{13}=\gamma^2_{13}$. 
If $\gamma^1_{13}=\gamma^2_{13}\neq 0$, then $(112)$ implies $\ell_1=\ell_2$, contradicting the assumption. 
Hence $\gamma^1_{13}=\gamma^2_{13} = 0$. Similarly we find that $\gamma^1_{23}=\gamma^2_{23}= 0$.
Since both  $E_{(\vec{B}_{\ell_i}, A) }$, $i=1,2$, contain  $\langle w_{11}, w_{22} \rangle$, we have 
$E_{(\vec{B}_{\ell_1}, A) } \cap  E_{(\vec{B}_{\ell_2}, A) }= \langle w_{11}, w_{22} \rangle$.
Thus we obtain that $E_{(\vec{B}_{\ell_1}, A)}\neq E_{(\vec{B}_{\ell_2}, A) }$. 

For $\lambda=-A^{-3}$, using Lemma~\ref{lem:VEA-3} one computes the images of $(R^\ell \otimes \mathbb 1)(V \otimes E_{-A^{-3}})$
as follows:
\begin{eqnarray*}
e_1 \otimes u_4 &\mapsto & A^\ell(  -A^2 e_1 \otimes e_1 \otimes e_2 + e_1 \otimes e_2 \otimes e_1 ) 
+ p_\ell ( -A^2 e_1 \otimes e_2 \otimes e_1  + e_2 \otimes e_1 \otimes e_1 ) \\
& & = -A^{\ell +2} \ e_1 \otimes e_1 \otimes e_2
+ (A^\ell - A^2 p_\ell ) e_1 \otimes e_2 \otimes e_1  + p_\ell \ e_2 \otimes e_1 \otimes e_1 =: w_1   , \\
 e_2 \otimes u_4 &\mapsto & A^\ell( -A^2 e_2 \otimes e_1 \otimes e_2 + e_2 \otimes e_2 \otimes e_1 ) 
 + p_\ell ( -A^2 e_1 \otimes e_2 \otimes e_2  + e_2 \otimes e_1 \otimes e_2 ) \\
 & & = -A^2 p_\ell \ e_1 \otimes e_2 \otimes e_2 + (- A^{\ell +2} + p_\ell) e_2 \otimes e_1 \otimes e_2 
 + A^\ell \ e_2 \otimes e_2 \otimes e_1 =: w_2 .
\end{eqnarray*}
We set $\sum_{i=1,2} \alpha_i e_i \otimes u_4 = \sum_{m=1,2} \beta_m w_m$ to obtain for each basis terms:

$$\begin{array}{lrcl}
(112) & -A^2 \alpha_1 & = & - A^{\ell +2} \beta_{1} \\
(121) &  \alpha_{2} & = & (A^{\ell } - A^2 p_\ell ) \beta_{1}  \\
(122) &  0 & = &  - A^2 p_\ell \  \beta_{2}   \\
(211) & 0 & = & p_\ell \beta_1 \\
(212) & - A^2 \alpha_{2} & = & (- A^{\ell +2} + p_\ell ) \beta_{2}  \\
(221) & \alpha_2 &=& A^\ell \beta_2
\end{array}
	$$		
and we obtain $\alpha_i$ and $\beta_m$ all vanish, hence $E_{(\vec{B}_\ell, - A^{-3})}$ is trivial. 
\end{proof}

It is also desirable to have examples using 
the {\it adjoint YBOs} \cite{CCES-adjoint} for Hopf algebras that correspond to the Hopf algebra version of the conjugation,  
and 
to expand examples to  ribbon surface braids and
  twist spun knots. 

\section{Quantum 3-cocycle invariant for surface braids}\label{sec:q3}

In this section we define quantum 3-cocycle invariants for surface braids.
We use the Yang-Baxter (YB) cohomology developed in \cite{SZ-YBH}, in particular its 3-cocycles, which we review briefly.

\begin{figure}[htb]
	\begin{center}
		\includegraphics[width=3in]{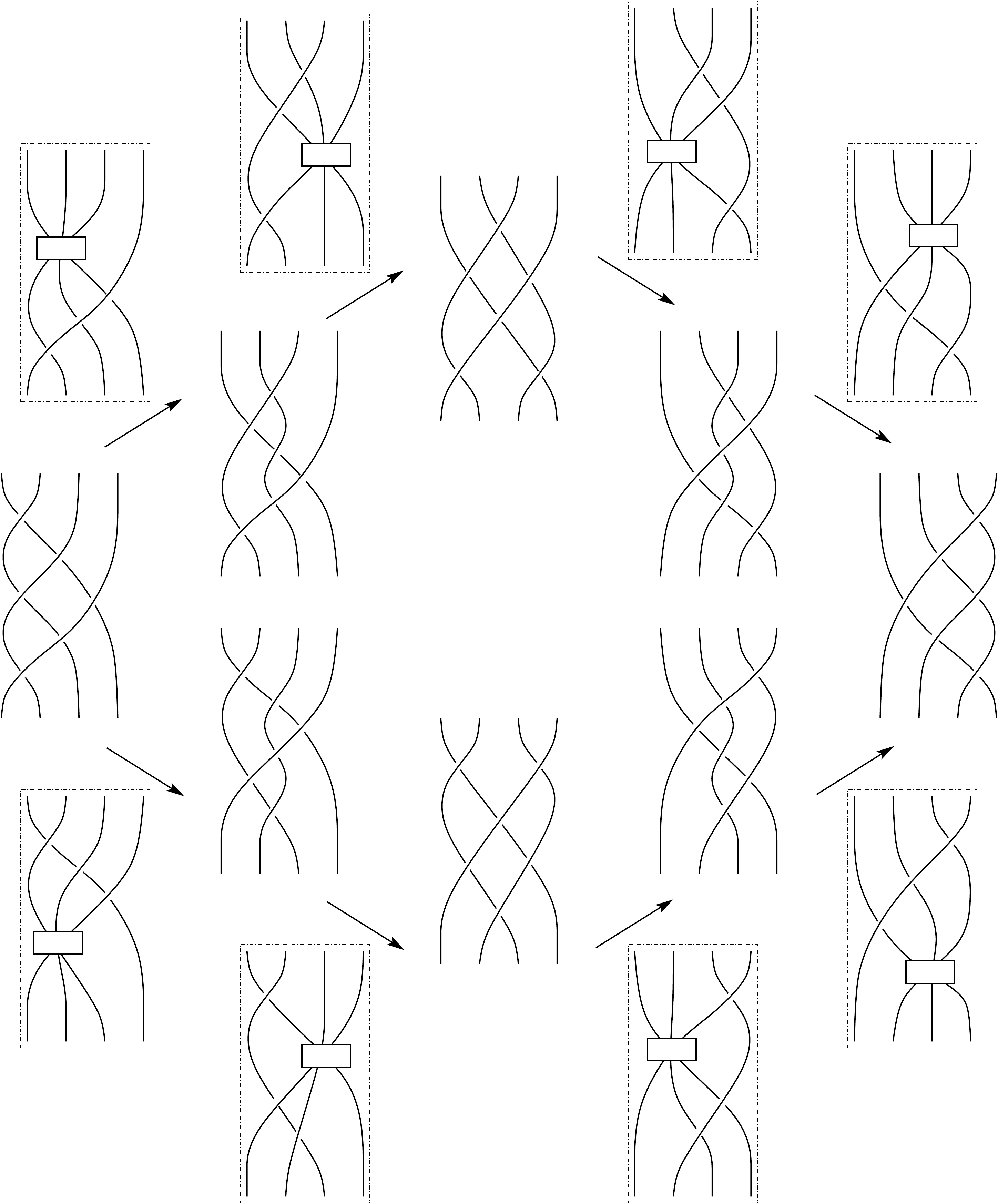}
	\end{center}
	\caption{ The tetrahedral move and the YB 3-cocycle condition
	}
	\label{tetra}
\end{figure}

In Figure~\ref{tetra} from~\cite{SZ:BC}, the tetrahedral move is depicted, that is one of the moves in movie descriptions of knotted surfaces~\cite{CS:book}.  It also indicates 
 the  relation to the YB 3-cocycle condition~\cite{SZ-YBH}.
From the leftmost to the rightmost braid form, there are two ways to transform them successively by a sequence of 
a braid relation as depicted, and the moments of transition is schematically depicted by boxes that represent the places where the braid relation is performed. 
When the box is interpreted as elements of ${\rm Hom}(V^{\otimes 3} , V^{\otimes 3} )$, the equation obtained by equating the sum of the top 4 terms and  the sum of the bottom  coincides with the YB 3-cocycle condition. 
 We will use the shorthand notation for $ \psi \in {\rm Hom}(V^{\otimes 3} , V^{\otimes 3} ) $ 
\begin{eqnarray*}
				\psi_{(i)} &=& \mathbb 1^{i-1}\otimes \psi \otimes \mathbb 1^{n-i-3}. 
\end{eqnarray*}
The fist term of the 3-cocycle condition is depicted with maps labeled in Figure~\ref{tetra1}.
The 3-cochain $\psi$ is placed at the three strands from the left, indicated by the subscript $(1)$. Since the matrix multiplication is from the left, $\psi_{(1)}$ is placed at the rightmost position, followed by 
$ R_{(1) } R_{ (2) }   R_{(3)}  $, the three YBOs placed at the first, second and third tensor factors from the bottom,
giving rise to the first term   $R_{(1) } R_{ (2) }   R_{(3)}  \psi_{(1)}  $. Then the YB 3-cocycle condition is written as

\begin{figure}[htb]
	\begin{center}
		\includegraphics[width=.8in]{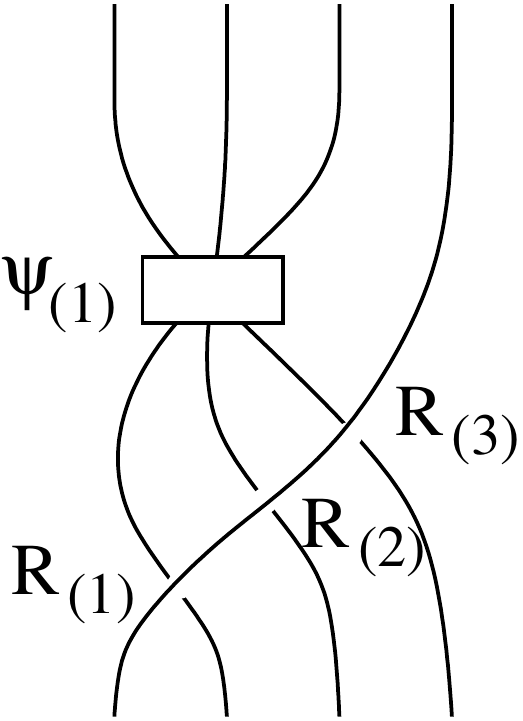}
	\end{center}
	\caption{ The first term of the Yang-Baxter 3-cocycle condition
	}
	\label{tetra1}
\end{figure}

%Specifically it is written as 
\begin{eqnarray}
\begin{aligned}
 & &  R_{(1) } R_{ (2) }   R_{(3)}  \psi_{(1)}  
 + R_{(1) }  \psi_{(2)} R_{(1) }  R_{ (2) }  
 +    R_{(3)}    \psi_{(1)}      R_{(3)}  R_{ (2) }   
 +    R_{(3)}  R_{ (2) }   R_{(1) }  \psi_{(2)}   \label{eqn:tetra}  \\
 &=&    
  \psi_{(1)}    R_{(3)}   R_{ (2) }  R_{(1) }  
 +    R_{(2)}    R_{ (1) }    \psi_{(2 )}  R_{(1) } 
+  R_{(2)}    R_{ (3) }    \psi_{(1)}  R_{(3) } 
+   \psi_{(2)}    R_{(1)}   R_{ (2) }  R_{(3) }  ,
\end{aligned}
\end{eqnarray}
where $\psi \in  {\rm Hom}(V^{\otimes 3} , V^{\otimes 3} )$ is a YB 3-cochain, and the subscripts indicate the tensor factors where these operators are placed as before.
The group of YB 3-cocycles is denoted by $Z^3(V, V)$.

To define the quantum cocycle  invariant using YB 3-cocycles, first we define color vectors for compelementary regions of charts, in an approach analogous to the idea for the quandle 3-cocycle invariant~\cite{CJKLS}. 

Let $S$ be a surface braid and $\Gamma (S)$ be a  chart of $S$ as before, with a base point $q_0$. 
Let ${\rm R}\Gamma (S)=\{ G_i \}_{i=1}^h$ be the set of connected components of the complementary regions $D^2 \setminus \Gamma(S)$. Pick and fix a point $q_i \in G_i$ for each $i=1, \ldots, h$. 
Pick a path $\gamma_i$ from $q_0 $ to $q_i$ in $D^2$ that misses black and white vertices of $\Gamma(S)$,
 and  in general position with $\Gamma(S)$. 
As one traces $\gamma_i$ from $q_0 $ to $q_i$, one encounters transverse double 
 intersection points 
$p_i^{(j)}$, $j=1, \ldots, h_i$, with edges of $\Gamma (S)$, that are labeled by braid generators or their inverses.
Let $\beta_i^{(j)}$ be such a sequence, and set 
 $\beta_i=\beta_i^{(h_i)} \cdots  \beta_i^{(1)}  \in \mathbb B_n$.

\begin{sloppypar}
Let $R$ be a YBO over $V$ with coefficient ring $\mathbb k$. 
Apply the representation $\rho : \mathbb B_n \rightarrow {\rm Hom}(V^{\otimes n} , V^{\otimes n})$ using $R$ for braid generators, and let 
 $ D_i := \rho ( \beta_i ) $. 
 For $v \in V$, let ${\mathcal C} (G_i, v) := D_i v$ and call it a {\it color vector} of the region $G_i$. 
 \end{sloppypar}

\begin{lemma}\label{lem:paths}
	Suppose $v \in E_{( \vec{B}, 1)} $ is  an eigenvector of eigenvalue $1$.
	Then the  {\emph color vector}  ${\mathcal C} (G_i, v ) = D_i v$ does not depend on a choice of  paths $\gamma_i$.
\end{lemma}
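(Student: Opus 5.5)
Two paths $\gamma_i$ and $\gamma_i'$ from $q_0$ to $q_i$ give braids $\beta_i$ and $\beta_i'$. We must show $\rho(\beta_i)v=\rho(\beta_i')v$, that is, $\rho(\beta_i^{-1}\beta_i')v=v$ (using the paper's right-to-left word convention, so that $\beta_i^{-1}\beta_i'$ is the monodromy read along the loop that goes out along $\gamma_i'$ and back along $\gamma_i^{-1}$). The plan is to reduce everything to the monodromy of closed loops based at $q_0$. So it suffices to show that for every loop $\ell$ based at $q_0$ in $D^2$ minus the black and white vertices, in general position with $\Gamma(S)$, the braid word $\beta_\ell$ read along $\ell$ satisfies $\rho(\beta_\ell)v=v$.

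First we check that the word read along a path is a homotopy invariant in $D^2\setminus\{\text{black and white vertices}\}$, up to braid relations. Any homotopy of paths with fixed endpoints is generated by a few local moves, each of which changes the word by a relation valid in $\mathbb B_n$:
\begin{itemize}
\item passing across an edge and back, which inserts $\sigma_j^{\pm1}\sigma_j^{\mp1}$;
\item passing over a crossing (2-valent vertex) of $\Gamma$, which swaps $\sigma_j$ and $\sigma_k$ with $|j-k|>1$ (commutation);
\item passing over a white vertex is excluded, since white vertices are removed from the domain.
\end{itemize}
Because $\rho$ is a representation, $\rho(\beta_\ell)$ depends only on the homotopy class of $\ell$ in $D^2$ minus the black vertices and white vertices.

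Next we handle white vertices. A small loop around a white vertex reads the word $\sigma_j\sigma_{j+1}\sigma_j\sigma_{j+1}^{-1}\sigma_j^{-1}\sigma_{j+1}^{-1}$ (or a conjugate of it or its inverse), which is trivial in $\mathbb B_n$. So white vertices can also be filled in, and $\rho(\beta_\ell)$ depends only on the class of $\ell$ in $\pi_1(D^2\setminus\Sigma(S),q_0)$. This group is free, generated by the Hurwitz loops: go along $a_k$ to near $y_k$, circle $y_k$ counterclockwise, and return. By the construction in Section~\ref{sec:prelim}, the word read along the $k$-th such loop is exactly $b_k=w_k^{-1}\sigma_{k_k}^{\epsilon(k)}w_k$. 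Hence $\beta_\ell$ equals in $\mathbb B_n$ a product of the $b_k^{\pm1}$, and $\rho(\beta_\ell)$ is a product of the $B_k^{\pm1}$.

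Finally we use the hypothesis $v\in E_{(\vec B,1)}$. By Lemma~\ref{lem:supset}, $E_{(\vec B,1)}\subset E(\vec B,1)$, so $B_kv=v$ for every $k$, and therefore $B_k^{-1}v=v$ as well. Any product of the $B_k^{\pm1}$ then fixes $v$, so $\rho(\beta_\ell)v=v$, and $D_iv$ is independent of the path.

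The main obstacle is the bookkeeping in the first two steps. One has to verify carefully that the path word is well defined up to homotopy, including the orientation and word-order conventions. One also has to check that the white-vertex loop really gives a trivial braid, using the local picture in Figure~\ref{sbraids}~(C).
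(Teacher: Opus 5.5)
Your proposal is correct and follows essentially the same route as the paper. You reduce to loops at $q_0$, move the loop past crossings (commutation), white vertices (braid relation, i.e.\ the YBE) and cancelling pairs until it becomes a product of Hurwitz loops; their operators $B_k^{\pm1}$ all fix $v$. The only cosmetic difference is that you first treat white vertices as punctures and then fill them in using the trivial relator word, whereas the paper slides the loop across them directly via the YBE.
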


\begin{proof}
	Since the composition $\alpha^{-1} \beta$ of two paths with the same end points forms a loop, it is sufficient to prove that $Dv=v$ for an operator $D$ corresponding to a loop $\gamma$ based at $q_0$. For such a loop, we first consider the effects of sliding through $4$-valent and $6$-valent (white) vertices.  At a $4$-valent vertex, in terms of braid generators, we have an equality of type 
	$\sigma_j \sigma_k=\sigma_k \sigma_j$ or their inverse variants, for $|j-k|>1$. Therefore, sliding the loop $\gamma$ through such a vertex corresponds to swapping the order of $R_{(j)}$ and $R_{(k)}$ for $|j-k|>1$. The corresponding operator $D$ associated to $\gamma$ is unchanged, since $R_{(j)}$ and $R_{(k)}$ commute by construction. This situation is depicted in Figure~\ref{homotopy} (A) and (B), where 
	the dotted arcs represent the arc $\gamma$ before and after the homotopy.

\begin{figure}[htb]
	\begin{center}
		\includegraphics[width=3.5in]{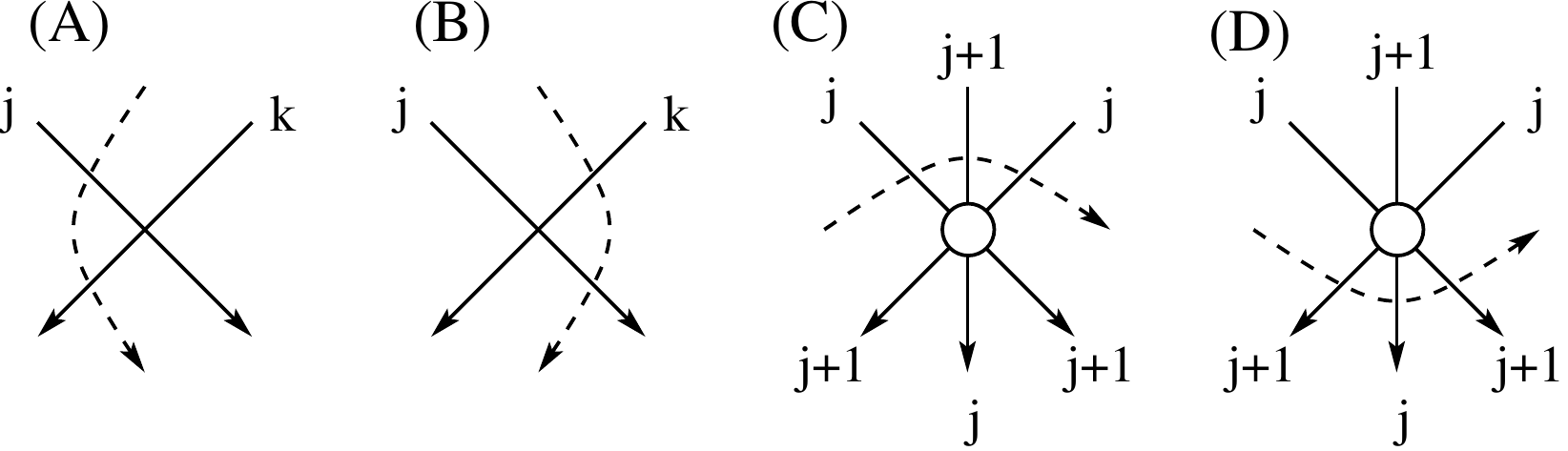}
	\end{center}
	\caption{ Homotopies through a crossing and a white vertex
	}
	\label{homotopy}
\end{figure}

	Likewise, the effect of sliding $\gamma$ through a white vertex is that of performing a 
	braid relation 
	$\sigma_j \sigma_{j+1} \sigma_j=\sigma_{j+1} \sigma_j \sigma_{j+1}$, or the associated inverse variants. Since $R$ satisfies the YBE,  performing a slide of $\gamma$ through a white vertex does not change the corresponding operator $D$ associated to $\gamma$. This homotopy is depicted in Figure~\ref{homotopy} (C) and (D). 
	
	When a pair of intersection points of $\gamma$ and an edge of $\Gamma(S)$ is canceled or added, 
	a canceling pair $\sigma_j \sigma_j^{-1}$ or $\sigma_j^{-1} \sigma_j$
	is added/canceled. The corresponding pair as a subsequence of 
	$D$ 
	cancels, and the operator 
	  $D$  is invariant under this change of braid word. 
	
	After performing finitely many of the previous moves, we obtain a new loop, which we still denote by $\gamma$ with an abuse of notation, whose corresponding operator $D$ is equal to the initial one because it is obtained by swapping commuting elements, or applying the YBE, or inserting/deleting 
	 cancelling pairs. The only points of the chart $\Gamma(S)$ where we  have not freely homotoped $\gamma$ off  
     are the black vertices. However, we can  homotope and  arrange $\gamma$ in such a way that it factors in a product $(a_{i_1}^{-1}s_{i_1}^{\epsilon_{i_1}}a_{i_1})\cdots (a_{i_k}^{-1}s_{i_k}^{\epsilon_{i_k}}a_{i_k})$ of loops, each obtained by taking a Hurwitz arc $a_{i_r}$, a loop $s_{i_r}$ around the $i_r$th black vertex taken with positive or negative sign, and concatenating with the inverse of the Hurwitz arc $a_{i_r}$. 
	This corresponds to a product of braids of type $(w_{i_1}^{-1} \sigma^{\epsilon_{i_1}}_{i_1} w_{i_1} ) \ldots (w_{i_k}^{-1} \sigma^{\epsilon_{i_k}}_{i_k} w_{i_k}) $. Therefore, we have 
              $D= B_{i_1}^{\epsilon_{i_1}}\cdots B_{i_k}^{\epsilon_{i_k}}$. 
	Since $v$ is in the $1$-eigenspace of each $B_{i_r}$, and therefore of each $B_{i_r}^{-1}$ as well, we obtain $Dv = v$ as required. 
\end{proof}

\begin{figure}[htb]
\begin{center}
\includegraphics[width=2.2in]{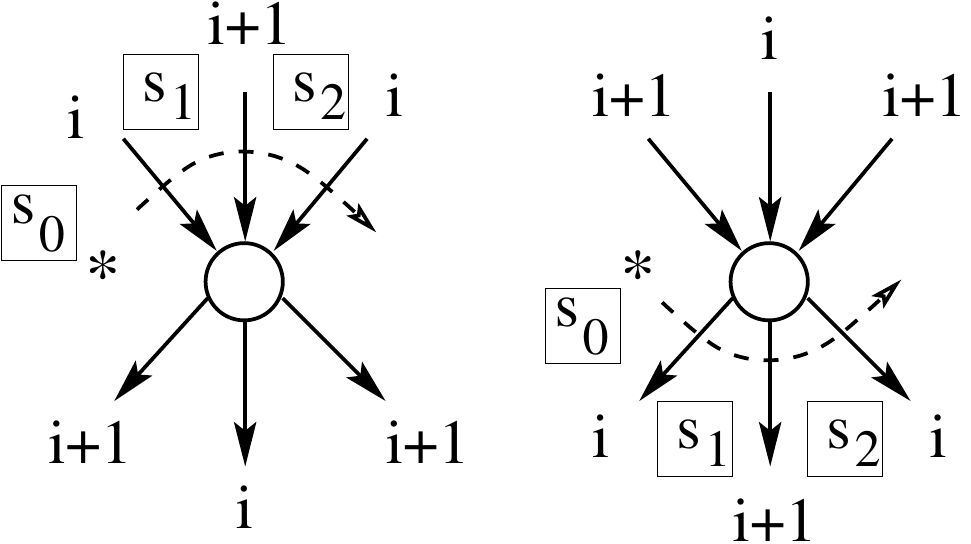}
\end{center}
\caption{Source regions and signs at white vertices}
\label{source}
\end{figure}

To define the 3-cocycle invariant, a few technical definitions needed for construction of the invariant follow.

\begin{definition}
{\rm
For a white vertex $T$, there are 6 adjacent complementary regions, and each region is bordered by two edges.
There is a unique region among the 6 regions such that the arc crossing the adjacent edges starting from that region goes from right to left across both edges, so that both arcs in this direction receive 
positive braid generators as labels. 
This region is called the {\it source region} of the vertex, and is denoted by $G_T$. 
Source regions for two possible labels are indicated by the symbol $*$ in Figure~\ref{source}. 
%Other regions have one positive and one negative, or both negative generators as labels.

When one crosses three arcs that are incoming to the vertex from the source region, 
there are two possibilities: (1) the labels are $(\sigma_i, \sigma_{i+1}, \sigma_i)$ in this order, or (2) $( \sigma_{i+1}, \sigma_i,  \sigma_{i+1})$. The case (1) is defined to be a {\it positive} white vertex, and in case (2) {\it negative}. 
They are depicted in Figure~\ref{source} on the left and right, respectively.

}
\end{definition}

In the figure, the dotted arcs are used later. For a positive case (left) the dotted arc is clockwise and agrees with the order $(\sigma_i, \sigma_{i+1}, \sigma_i)$, and for the negative case (right) the dotted arc is counterclockwise 
and the labels it encounters also agree with $(\sigma_i, \sigma_{i+1}, \sigma_i)$.

\begin{definition}
{\rm
A  Yang-Baxter 3-cocycle  $\psi \in Z^3(V,V)$  with respect to a given YBO $R$
is said to have (or satisfy)
the {\it quandle condition }  if it satisfies 
$$\psi (v \otimes w)=0, $$
whenever either $v$ or $w$  (exclusively)  is a $1$-eigenvector of $R$. }
\end{definition}

	Note that if $v$ is a 1-eigenvector of $R$, then $v \in V^{\otimes 2}$, and then $w \in V$, so that $v \otimes w \in V^{\otimes 3}$, 
	the domain of $\psi$.

\begin{definition}
{\rm
Let $\Gamma(S)$ be a chart with base point $q_0$.
Let $v \in E_{(\vec{B}, 1)}$. 
Let $W\Gamma(S)$ be the set of white vertices, and  for $T \in W\Gamma(S)$, let $s(T,v)$ be the region color 
vector 
 of the source region  at $T$ with respect to $v$ assigned to $q_0$. 
 This is called the {\it source region color vector} at $T$. 
}
\end{definition}

\begin{definition}[\cite{CCES-coalgebra}]
{\rm 
A map $\varepsilon: V \longrightarrow \mathbb k$ such that $(\varepsilon \otimes \varepsilon )R = \varepsilon \otimes \varepsilon$ is called a {\it Yang-Baxter counit}, or a YB counit for short. 
%In what follows we assume that there exists a map $\varepsilon: V \longrightarrow \mathbb k$ such that $(\varepsilon \otimes \varepsilon )R = \varepsilon \otimes \varepsilon$. Such map $\varepsilon$ satisfying this %braided 
%condition will be referred to as a {\it Yang-Baxter counit}, or YB counit for short. 
}
\end{definition}

In what follows we assume that a YBO has a YB counit.

\begin{example}\label{ex:quandleunit}
	{\rm 
			As an example of a YBO endowed with a YB counit, we consider any set-theoretic YBO over a set $X$. We define $R$ to be the linearized operator on the free vector space 
			$V=\mathbb k [X]$, %$V = \mathbb k\langle X\rangle$, 
			and $\varepsilon : V \longrightarrow \mathbb k$ given by $\varepsilon(x) = 1$ for each $x\in X$ and extended by linearity to $V$. 
			Then $\varepsilon$ is a YB counit. This is seen by computation. Let the operation on $X$, $r: X \times X \rightarrow X \times X$,  be expressed by 
$(x, y) \mapsto (r_1(x, y), r_2 (x,y))$  for $x, y \in X$.
Then for a  basis vector $x \otimes y \in V \otimes V$, one computes
$$ (\varepsilon \otimes \varepsilon) R (x \otimes y)= 	(\varepsilon \otimes \varepsilon) (r_1(x, y)\otimes  r_2 (x,y))
= 1 = (\varepsilon \otimes \varepsilon)(x \otimes y) . $$ 
			
				In particular, any quandle gives rise to such YBO with corresponding YB counit, following the construction of Section~\ref{sec:quandle_col}. 
	}
\end{example}

\begin{example}[\cite{CCES-coalgebra}]
	{\rm  
			Let $H$ be a Hopf algebra over a ground ring $\mathbb k$. 
						The Hopf algebra version of group conjugation 
			$x\otimes y \mapsto  S(y^{(1)})xy^{(2)}$ called the adjoint induces a YBO called the adjoint 
			YBO % R-matrix.
						\begin{eqnarray*}
					x\otimes y \mapsto y^{(1)}\otimes S(y^{(2)})xy^{(3)}, 
			\end{eqnarray*}
			where $y^{(i)}$ is Sweedler's notation for the comultiplication,
				 and $y^{(1)} \otimes y^{(2)} \otimes y^{(3)} $ represents 
			$ (\Delta \otimes \mathbb 1 ) \Delta (y) =  (\mathbb 1   \otimes \Delta )   \Delta(y) $ for the comultiplication $\Delta$. The concatenation of symbols represents multiplication. 
 If $\varepsilon$ indicates the counit of the Hopf algebra, one has that 
			\begin{eqnarray*}
					(\varepsilon \otimes \varepsilon )R(x\otimes y) &=& \varepsilon(y^{(1)}) \varepsilon(S(y^{(2)})xy^{(3)})\\
					&=& \varepsilon(x)  \varepsilon(y^{(1)}S(y^{(2)})y^{(3)})\\
					&=& \varepsilon(x)\varepsilon(y)\\
					&=& (\varepsilon \otimes \varepsilon )(x\otimes y). 
			\end{eqnarray*}
			Therefore, it follows that $\varepsilon$ is a YB counit with respect to the adjoint YBO.
			%induced by the quantum quandle operation. 
	}
\end{example}

\begin{example}
{\rm

The bracket YBO defined by $R=A \mathbb 1 + A^{-1} \cap \cup$ as in Section~\ref{sec:bracket} does not have 
%such
 a YB counit as above unless $A=1$. 
This is seen by computation as follows. If such an $\varepsilon $ exists, then one computes, with $i=\sqrt{-1}$, 
\begin{eqnarray*}
(\varepsilon\otimes \varepsilon)R(e_1 \otimes e_2) 
& = & 
A \varepsilon( e_1) \varepsilon( e_2) 
+ A^{-1} (\varepsilon\otimes \varepsilon) [ (  i A ) ( iA e_1 \otimes e_2 - iA^{-1} e_2 \otimes e_1 )] \\
&=& 
A  \varepsilon( e_1)  \varepsilon( e_2) + i [ iA  \varepsilon( e_1)\varepsilon( e_2)  - iA^{-1}  \varepsilon( e_2)  \varepsilon( e_1) \\
&=& A^{-1} \varepsilon( e_1)  \varepsilon( e_2) 
\end{eqnarray*}
as claimed.
}
\end{example}

For a white vertex $T\in W\Gamma(S)$ and a given vector $v\in E(\vec B,1)$, 
let 
$s(T, v)$ %to 
be the color vector assigned to the source region $G_T$ of $T$ as indicated by $*$ in Figure~\ref{source}. Let $\gamma_T$ denote the path connecting $q_0$ and $G_T$. Let $B_T$ denote the product of matrices associated to $\gamma_T$, obtained by composing the operators $R^\pm_{(k)}$ associated to the generators $\sigma_k^{\pm}$ along the path $\gamma_T$ from 
$q_0$ to $G_T$.  Then we have $s(T,v)=B_T v$. 

 Let $i$ denote the index of the braid relation that is applied at $T$, i.e. suppose that at $T$ we have $\sigma_{i}\sigma_{i+1}\sigma_i= \sigma_{i+1}\sigma_i \sigma_{i+1}$
 for a positive white vertex as in the left of Figure~\ref{source}. 
% {(Seems not used:) Then prolong  $B_T$ along the white vertex $T$ as $R_{(i)}R_{(i+1)}R_{(i)}B_T$ 
% along the dotted line. }
 
 \begin{definition}
	{\rm 

 Let $\psi \in Z^3(V,V)$ be a Yang-Baxter 3-cocycle with respect to a given YBO $R$ that satisfies the quandle condition. 

For each $v\in E(\vec B,1)$ and $T\in W\Gamma(S)$, 
 we define the Boltzmann weight $BW_\psi(T,v)$ as 
\begin{eqnarray*}
		BW_\psi(T,v) &=&  \epsilon(T)\varepsilon^{\otimes n} \psi_{(i)}  (s(T,v) ) \ = \    \epsilon(T)\varepsilon^{\otimes n}\psi_{(i)}B_{T} v. 
\end{eqnarray*}

			%With notation as given above, 
			Then we define the map $\Psi(S,\psi) : E(\vec B,1) \longrightarrow \mathbb k$ as
			\begin{eqnarray*}
					\Psi(S,\psi)(v) &=& \sum_{T\in W\Gamma(S)} BW_\psi(T,v). 
			\end{eqnarray*}
	}
\end{definition}

\begin{theorem}\label{thm:3cocy}
The map $\Psi(S, \psi)$ is indeed an invariant of surface braids.
\end{theorem}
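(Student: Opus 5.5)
Surface braids are equivalent if and only if their charts are related by Kamada's chart moves (CI, CII, CIII) together with ambient isotopy of the disk rel boundary. The plan is therefore to check that $\Psi(S,\psi)$, viewed as a function on $E(\vec B,1)$, is unchanged under each move.

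Note first that the domain $E(\vec B,1)$ is itself an invariant by Theorem~\ref{thm:sfc_braid}, so comparing the two maps makes sense. By Lemma~\ref{lem:paths}, each region color vector $\mathcal C(G_i,v)=D_iv$ is independent of the chosen path. Hence the Boltzmann weight $BW_\psi(T,v)=\epsilon(T)\varepsilon^{\otimes n}\psi_{(i)}s(T,v)$ depends only on the chart near $T$ and on $v$. Ambient isotopy and moves far from the base point thus leave each weight unchanged. For moves involving only crossings, edges and black vertices (CI-type moves that create or delete no white vertices, the CII move, and the CIII move sliding a black vertex through an edge), the set of white vertices and their source-region colors are unchanged. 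These colors are computed by path-independent operators away from the move disk, so $\Psi$ is unchanged.

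The substantive cases are the moves that create or annihilate white vertices.

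\textbf{Canceling pair of white vertices.} A pair of white vertices of opposite sign, connected by edges as in the CI move, have the same source region color (or colors related by YBE-invariant paths). Their weights carry opposite signs $\epsilon(T)$ and cancel.

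\textbf{White vertex adjacent to a black vertex.} In the CI/CIII moves where a white vertex sits next to a black vertex, the source region color near the black vertex is an element that is a $1$-eigenvector of the relevant $R_{(j)}$ on two tensor factors. The quandle condition $\psi(v\otimes w)=0$ is designed to kill exactly these weights. To apply it, one must check that the color vector, restricted to the three factors where $\psi_{(i)}$ acts, lies in $E_1\otimes V$ or $V\otimes E_1$ (using $E_{(\vec B,1)}$ rather than $E(\vec B,1)$, as in Lemma~\ref{lem:supset}). Here one also uses $(\varepsilon\otimes\varepsilon)R=\varepsilon\otimes\varepsilon$ so that the outer factors contribute nothing.

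\textbf{Tetrahedral move.} This is the move in which four white vertices become four others (Figure~\ref{tetra}). I expect it to be the main obstacle. Express each source color as $P\,s$, where $s$ is the color of a common reference region adjacent to the move disk and $P$ is a product of $R_{(k)}$ read along the dotted arcs of Figure~\ref{source}. The sum of the four weights on one side then becomes $\varepsilon^{\otimes n}$ applied to one side of \eqref{eqn:tetra} evaluated at $s$. The placement of $R$'s before $\psi$ is converted by the YB counit property: every $R_{(k)}$ appearing to the left of $\psi$ is absorbed by $\varepsilon^{\otimes n}$, while those to the right account for transporting $s$ to the source region. Because $\psi\in Z^3(V,V)$, the two sides agree.

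The delicate points are the bookkeeping of the signs $\epsilon(T)$ against the eight terms of \eqref{eqn:tetra}, and the direction of the dotted arcs for positive versus negative vertices. I would handle these by first treating the all-positive-label tetrahedral move and then deducing the other orientations via mirror/inverse variants and the canceling-pair case.
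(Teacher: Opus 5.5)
Your outline has the same structure as the paper's proof: domain invariance via Theorem~\ref{thm:sfc_braid}, path independence via Lemma~\ref{lem:paths}, the black-vertex move killed by the quandle condition, the canceling pair by opposite signs, and the tetrahedral move by applying $\varepsilon^{\otimes n}$ to \eqref{eqn:tetra} so that the $R$'s to the left of $\psi$ are absorbed.

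There is, however, a genuine gap at the move that pushes an edge with a far label $k$ (with $|k-i|>1$ and $|k-i-1|>1$) across a white vertex $T$ with labels $i,i+1$. This is move (B) in Figure~\ref{Cmoves}. The move neither creates nor deletes white vertices. So it either falls under your parenthetical about CI-type moves, where you assert that source-region colors are unchanged, or it is not treated at all.

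That assertion is false here. After the move, $T$ and all six of its sectors lie on the other side of the $k$-edge. A point just outside the move disk keeps its color, since a path from $q_0$ to it can avoid the disk. Hence a path from $q_0$ to $G_T$ picks up one extra crossing with the $k$-edge, and $s(T,v)$ is replaced by $R_{(k)}^{\pm1}s(T,v)$ (using that $R_{(k)}$ commutes with $R_{(i)},R_{(i+1)}$). Lemma~\ref{lem:paths} cannot rescue this, because it compares paths within one chart, not regions of two different charts. For the same reason, your claim that $BW_\psi(T,v)$ depends only on the chart near $T$ is not right. The weight depends on where $G_T$ sits relative to $q_0$, and the chart near $T$ is exactly what this move alters.

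The missing step is short, but it needs a hypothesis you never invoke in this case:
\begin{itemize}
\item Since $\{k,k+1\}\cap\{i,i+1,i+2\}=\emptyset$, the operators $\psi_{(i)}$ and $R_{(k)}^{\pm1}$ act on disjoint factors and commute.
\item By the YB counit property, $\varepsilon^{\otimes n}R_{(k)}^{\pm1}=\varepsilon^{\otimes n}$. For $R^{-1}$, compose $(\varepsilon\otimes\varepsilon)R=\varepsilon\otimes\varepsilon$ on the right with $R^{-1}$.
\item Hence $\varepsilon^{\otimes n}\psi_{(i)}R_{(k)}^{\pm1}s(T,v)=\varepsilon^{\otimes n}\psi_{(i)}s(T,v)$.
\end{itemize}
Apart from the tetrahedral move, this is exactly where the counit is indispensable: without it $\Psi$ would change under this move.

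Conversely, the counit plays no role in the black-vertex case. There the quandle condition already gives $\psi_{(i)}s(T,v)=0$ once $s(T,v)$ lies in $V^{\otimes(i-1)}\otimes E_1\otimes V^{\otimes(n-i-1)}$ or in $V^{\otimes i}\otimes E_1\otimes V^{\otimes(n-i-2)}$. Be aware that $G_T$ is the region surrounding the black vertex only when the black vertex lies on one of the two edges bounding $G_T$. When it lies on an edge bounding the opposite sector, you need a short YBE computation, e.g.\ $R_{(i+1)}R_{(i)}^{-1}R_{(i+1)}^{-1}=R_{(i)}^{-1}R_{(i+1)}^{-1}R_{(i)}$, to see that $s(T,v)$ is still fixed by $R_{(i)}$ or $R_{(i+1)}$. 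The paper also passes over this point with ``the other cases are analogous''.
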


\begin{proof}
We show that, for any $v \in E(\vec B,1)$, $\Psi(S, \psi)(v)$ remains invariant under equivalence of surface braids.
Specifically, invariance under slide equivalence and chart moves 
(see below)  
are examined.
 It can be assumed that a 
 slide equivalence does not alter charts, and invariance of $E_{(\vec B, 1)}$ was shown in Theorem~\ref{thm:sfc_braid}.
Chart moves is a  set of moves of charts that generate equivalence of charts~\cite{Kamada:SB}, and they are  depicted in Figure~\ref{Cmoves}. The upper half of moves do not involve white vertices, and the lower moves do. 
Thus we check the lower chart moves (A) through (D).

\begin{figure}[htb]
\begin{center}
\includegraphics[width=4in]{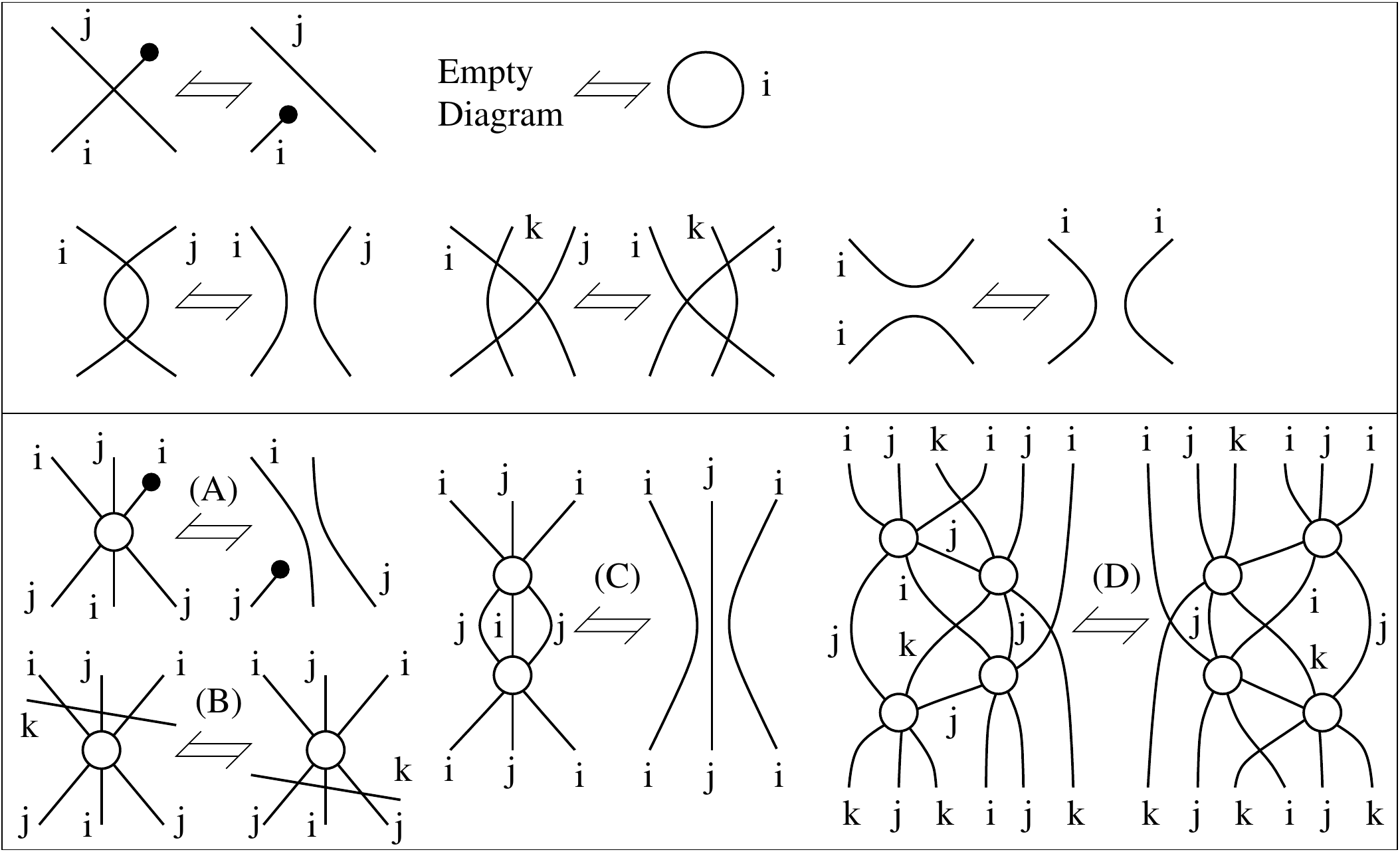}
\end{center}
\caption{Chart moves of surface braids}
\label{Cmoves}
\end{figure}

First we prove the invariance under  the move of a black vertex going through a white vertex, as shown in Figure~\ref{Cmoves}  (A).
We consider the case where the labels in the left-hand-side of (A) are $i$, $i+1$ and $i$  as in the figure, and they are all 
 incoming direction into the white vertex $T$. The other cases are  analogous. 
 We denote by $w_h$ the braid word associated to a Hurwitz arc $a_h$ connecting the base point $q_0$ 
 to the black vertex $y_h$ appearing in the 
  right  figure. The corresponding braid word in the braid system is given by $w_h^{-1} \sigma_i^{-1} w_h$, which is represented by the operator 
 $\rho(w_h)^{-1} R_{(i)}^{-1} \rho(w_h)$.
 The inverse generator $\sigma_i^{-1}$ for the black vertex is due to that the edge is outgoing from the black vertex.
 
 The image under a representation $\rho$ of a  Hurwitz arc for the black vertex in the left figure is $ \rho(w_h)$.
 Since $v\in E(\vec b, 1)$, it follows that $\rho(w_h)^{-1} R_{(i)}^{-1} \rho(w_h) v = v$, and therefore that $R_{(i)}^{-1} \rho(w_h) v = \rho(w_h) v$. Hence we have that 
$$\rho(w_h) v\in E(R_{(i)},1) = V^{\otimes (i-1)} \otimes E(R,1)\otimes V^{\otimes n-i-3}. $$
From the left figure, we see that a path $\gamma_T$ connecting the base point to the region $G_T$ of the white vertex $T$ is homotopic to a path that intersects all the same edges as the Hurwitz arc $a_h$. Hence the 
 contribution of the white vertex $T$ to the map $\Psi(S,\psi)$ for $v\in E(\vec b,1)$ is given by 
$\epsilon(T)\varepsilon^{\otimes n}\psi_{(i)}\rho(w_h)v = 0$, since $\psi_{(i)}$ is trivial on $V^{\otimes (i-1)} \otimes E(R,1)\otimes V^{\otimes n-i-2}$ by the quandle condition of $\psi$. This shows that removing the white vertex as in (A) does not change the value of $\Psi(S,\psi)$ for  any $v\in E(\vec b,1)$, as required.

\begin{figure}[htb]
\begin{center}
\includegraphics[width=2.3in]{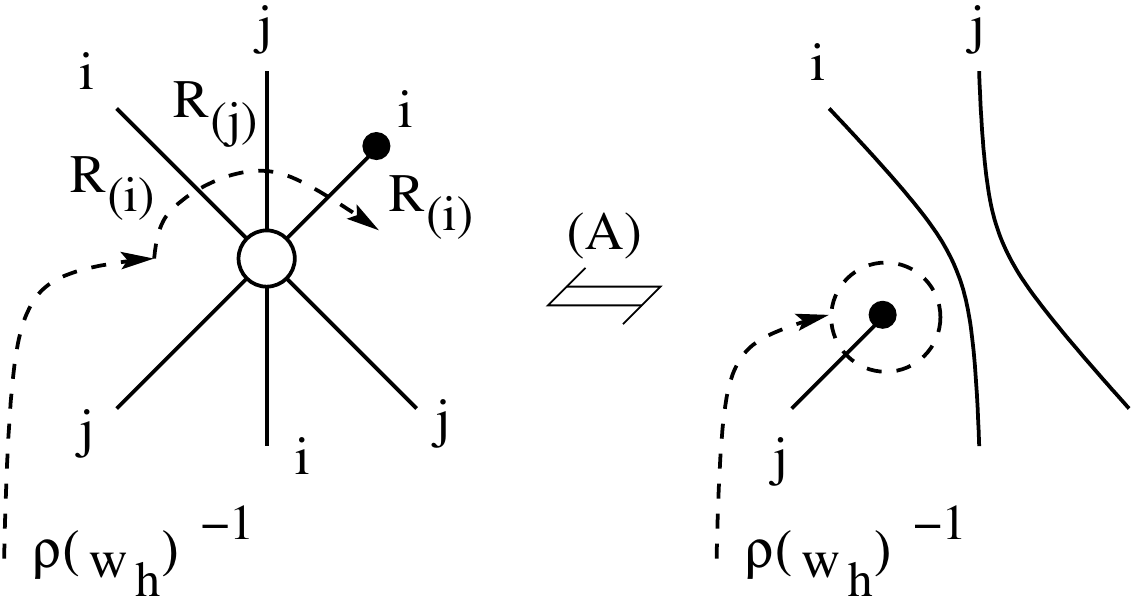}
\end{center}
\caption{Checking chart move (A)}
\label{CmoveA}
\end{figure}

Next we show the invariance under move (B) in Figure~\ref{Cmoves}. 
Let $v$ be the color vector of the source region $G_T$ of the white vertex $T$ in the  left-hand side of the figure.
We may assume that the arc $\gamma_T$ to $G_T$ does not intersect the arc labeled $k$ in the left-hand side of the figure, and does intersect in the right-hand side, in the positive direction. 
Then  the corresponding Botlzmann weight at $T$ is computed for the left-hand side as 
\begin{eqnarray*}
		BW_\psi(T,v) &=& \epsilon(T)\varepsilon^{\otimes n}\psi_{(i)}( s(T, v) ), 
\end{eqnarray*}
while for the corresponding term on the right-hand-side we have a weight computed as
\begin{eqnarray*}
	BW_\psi(T,v) &=& \epsilon(T)\varepsilon^{\otimes n}\psi_{(i)}R_{(k)} (s(T,v)). 
\end{eqnarray*}
The situation is depicted in Figure~\ref{CmoveB}. 
Since $|k-i|, |k-i-1| > 1$, we have
\begin{eqnarray*}
		\varepsilon^{\otimes n}\psi_{(i)}R_{(k)}(s(T,v)) &=& 
		\varepsilon^{\otimes 3}\psi_{(i)} (\varepsilon^{\otimes (i-1)}\otimes \mathbb 1^{\otimes 3}\otimes \varepsilon^{\otimes (n-i-3)})R_{(k)}(s(T,v))\\
		&=& \varepsilon^{\otimes 3}\psi_{(i)} (\varepsilon^{\otimes (i-1)}\otimes \mathbb 1^{\otimes 3}\otimes \varepsilon^{\otimes (n-i-3)})(s(T,v))\\
		&=&  \varepsilon^{\otimes n}\psi_{(i)}(s(T,v)), 
\end{eqnarray*}
where we have used the fact that $(\varepsilon \otimes \varepsilon )R = \varepsilon \otimes \varepsilon$. 
This shows that move (B) does not change the corresponding Boltzmann weight.

\begin{figure}[htb]
\begin{center}
\includegraphics[width=3in]{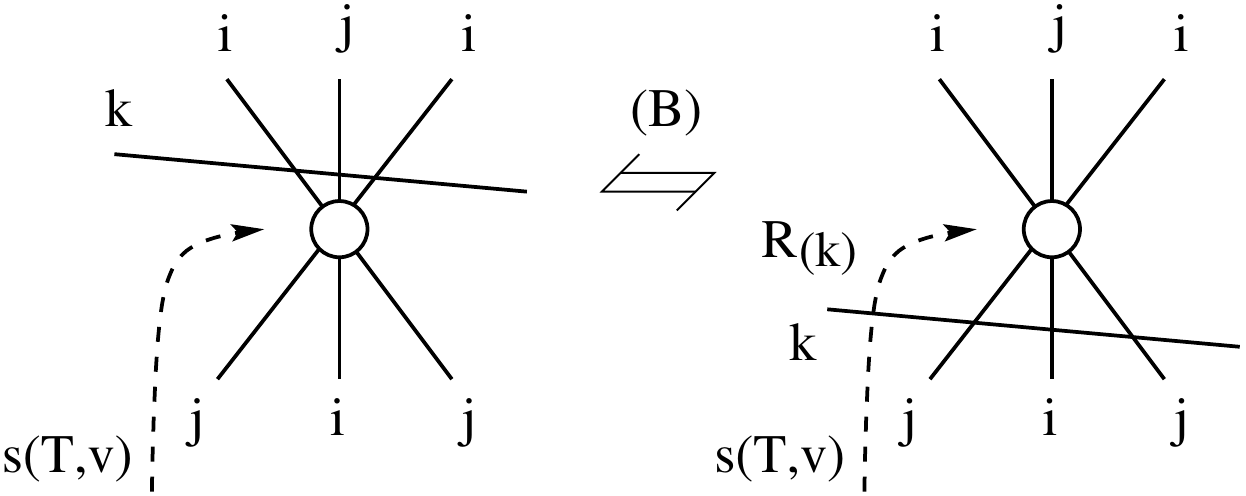}
\end{center}
\caption{Checking chart move (B)}
\label{CmoveB}
\end{figure}

Move (C), corresponding to the cancelation of a pair of white vertices with opposite signs,  does not alter the sum of cocycle values due to the sign convention from which the pair cancels with opposite signs. 

%%%
The YB 3-cocycle condition, with the leftmost index $i$,  reads 
\begin{eqnarray*}
 & &  R_{(i) } R_{ (i+1) }   R_{(i+2)}  \psi_{(i)}  
 + R_{(i) }  \psi_{(i+1)} R_{(i) }  R_{ (i+1) }  
 +    R_{(i+2)}    \psi_{(i)}      R_{(i+2)}  R_{ (i+1) }   
 +    R_{(i+2)}  R_{ (i+1) }   R_{(i) }  \psi_{(i+1)}  \\
 &=&    
  \psi_{(i)}    R_{(i+2)}   R_{ (i+1) }  R_{(i) }  
 +    R_{(i+1)}    R_{ (i) }    \psi_{(i+1 )}  R_{(i) } 
+  R_{(i+1)}    R_{ (i+2) }    \psi_{(i )}  R_{(i+2) } 
+   \psi_{(i+1)}    R_{(i)}   R_{ (i+1) }  R_{(i+2) }  .
\end{eqnarray*}

\begin{figure}[htb]
\begin{center}
\includegraphics[width=2.5in]{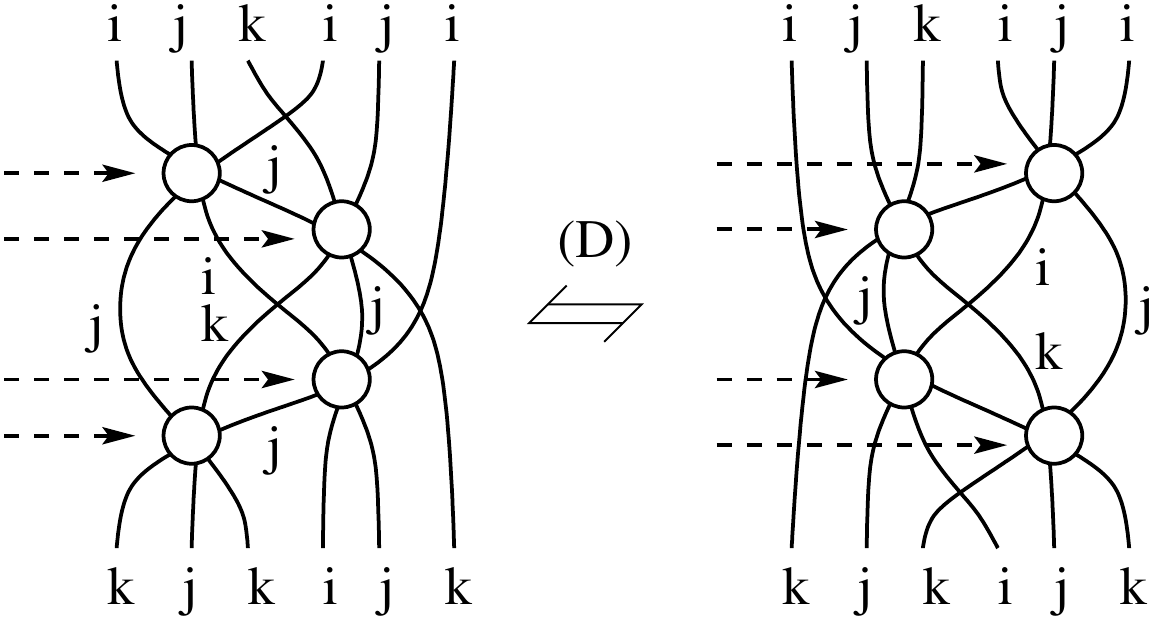}
\end{center}
\caption{Checking chart move (D)}
\label{CmoveD}
\end{figure}

Applying the YB counit $\varepsilon^{\otimes 4}$ on both sides from the left we obtain 
\begin{eqnarray*}
 & & \varepsilon^{\otimes 4}  \psi_{(i)}  
 + \varepsilon^{\otimes 4} \psi_{(i+1)} R_{(i) }  R_{ (i+1) }  
 +   \varepsilon^{\otimes 4}  \psi_{(i)}      R_{(i+2)}  R_{ (i+1) }   
 +  \varepsilon^{\otimes 4} \psi_{(i+1)}  \\
 &=&    
  \psi_{(i)}    R_{(i+2)}   R_{ (i+1) }  R_{(i) }  
 +  \varepsilon^{\otimes 4}   \psi_{(i+1 )}  R_{(i) } 
+  \varepsilon^{\otimes 4}   \psi_{(i )}  R_{(i+2) } 
+  \varepsilon^{\otimes 4} \psi_{(i+1)}    R_{(i)}   R_{ (i+1) }  R_{(i+2) }  .
\end{eqnarray*}
Let $T$ be the top white vertex of the left-hand side of the chart move (D).
Let $i$ be the leftmost label as in Figure~\ref{Cmoves}.
Then from the figure we find that the contributions of these four white vertices on the left and right sides are 
\begin{eqnarray*}
 & & \varepsilon^{\otimes 4}  \psi_{(i)}  s(T,v)
 + \varepsilon^{\otimes 4} \psi_{(i+1)} R_{(i) }  R_{ (i+1) }  s(T, v) \\ %B_T (v) \\
 & & 
 +   \varepsilon^{\otimes 4}  \psi_{(i)}      R_{(i+2)}  R_{ (i+1) }  s(T, v)  % B_T (v)
 +  \varepsilon^{\otimes 4} \psi_{(i+1)} s(T, v) \\ %B_T (v) \\
 &=&    
  \psi_{(i)}    R_{(i+2)}   R_{ (i+1) }  R_{(i) }   s(T, v) % B_T (v) 
 +  \varepsilon^{\otimes 4}   \psi_{(i+1 )}  R_{(i) }  s(T, v) \\ %B_T (v) \\
& & +  \varepsilon^{\otimes 4}   \psi_{(i )}  R_{(i+2) } s(T, v)  %  B_T (v) 
+  \varepsilon^{\otimes 4} \psi_{(i+1)}    R_{(i)}   R_{ (i+1) }  R_{(i+2) }  s(T, v) % B_T (v)  
\end{eqnarray*}
where four terms on each side correspond to four dotted lines in each side in Figure~\ref{CmoveD}, 
and the invariance under (D) follows. 
\end{proof}

For the rest of the section we examine an example constructed from quandle rings as in Example~\ref{ex:22}.

\begin{lemma} \label{lem:q3cocy}
Let $(X, *)$ be a quandle and $V={\mathbb k}[X]$ be its quandle ring.
Let $\theta \in Z^3_{\rm Q}(X, \mathbb k)$, where the abelian group of $\mathbb k$ is used as the coefficient of quandle cohomology group of $X$. 

%Let $\mathbb k =\mathbb r  [\hbar]/(\hbar^2) $ where the abelian group of a unital ring $\mathbb r$ is used
%for the quandle cohomology groups of $X$.
%Let $\theta \in Z^3_{\rm Q}(X, A)$.

Let $\psi: V^{\otimes 3} \longrightarrow V^{\otimes 3}$ be defined for simple tensors by 
$$ 
\psi (e_x \otimes e_y \otimes e_z) :=
\theta(x,y,z)\  e_z \otimes e_{y*z} \otimes e_{ (x*y)*z }, $$
and linearly extended. 
Then $\psi $ is a YB 3-cocycle, $\psi \in Z^3_{\rm YB} (V,V)$.
\end{lemma}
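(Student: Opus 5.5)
The plan is to reduce the Yang-Baxter 3-cocycle condition \eqref{eqn:tetra} for $\psi$ to the quandle 3-cocycle condition for $\theta$ by evaluating both sides on basis vectors $e_x\otimes e_y\otimes e_z\otimes e_w$ of $V^{\otimes 4}$. Linearity then gives the identity on all of $V^{\otimes 4}$.

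The first step is to observe that $\psi$ is a diagonally weighted version of a braid operator. With $R(e_x\otimes e_y)=e_y\otimes e_{x*y}$ we compute
\[
R_{(1)}R_{(2)}R_{(1)}(e_x\otimes e_y\otimes e_z)=e_z\otimes e_{y*z}\otimes e_{(x*y)*z},
\]
so that
\[
\psi(e_x\otimes e_y\otimes e_z)=\theta(x,y,z)\,R_{(1)}R_{(2)}R_{(1)}(e_x\otimes e_y\otimes e_z).
\]
Hence $\psi_{(i)}$ acts on a basis tensor as multiplication by a scalar $\theta$, evaluated at the three colors in positions $i,i+1,i+2$ of that tensor, followed by the permutation operator $\rho(\sigma_i\sigma_{i+1}\sigma_i)$.

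The second step is to compare the eight terms of \eqref{eqn:tetra}. In each term, replacing $\psi_{(1)}$ or $\psi_{(2)}$ by the corresponding braid word of length three yields a braid word of length six in $\mathbb B_4$. Every such word is a reduced word for the half twist $\Delta_4$, which is exactly what the tetrahedral move of Figure~\ref{tetra} expresses. Since $\rho$ is a representation, all eight terms send $e_x\otimes e_y\otimes e_z\otimes e_w$ to the same basis tensor $\rho(\Delta_4)(e_x\otimes e_y\otimes e_z\otimes e_w)$, each multiplied by one $\theta$-value. The $\theta$-value in each term is read off by pushing the colors $(x,y,z,w)$ through the $R$'s that act before $\psi$, using the rule $(a,b)\mapsto(b,a*b)$. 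For instance, $R_{(1)}R_{(2)}R_{(3)}\psi_{(1)}$ contributes $\theta(x,y,z)$, and $R_{(1)}\psi_{(2)}R_{(1)}R_{(2)}$ contributes $\theta$ at the colors in positions $2,3,4$ after applying $R_{(1)}R_{(2)}$, and similarly for the other six terms. The $\theta$-values that come from the same side of the chosen basis tensor are simply added, so \eqref{eqn:tetra} becomes a scalar identity among eight $\theta$-values.

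The third step, which I expect to be the main obstacle, is to identify this eight-term identity with the quandle 3-cocycle condition. That condition reads
\[
\theta(x,y,z)+\theta(x*z,y*z,w)+\theta(x,z,w)=\theta(x*y,z,w)+\theta(x,y,w)+\theta(x*w,y*w,z*w),
\]
and it has only six terms. I therefore expect one term on each side of \eqref{eqn:tetra} to carry the same argument as a term on the other side, so that this pair cancels. Failing that, I expect a pair of terms to be evaluated at a degenerate triple, where the normalization $\theta(a,a,b)=\theta(a,b,b)=0$ for quandle cocycles kills it. The careful bookkeeping of the colors, including self-distributivity rewrites such as $(x*y)*z=(x*z)*(y*z)$ that are needed to match arguments, is where the work lies. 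I would first tabulate all eight arguments explicitly before attempting to match them.
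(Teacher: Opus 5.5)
Your plan is correct and is essentially the paper's argument. Both evaluate the eight terms of \eqref{eqn:tetra} on $e_x\otimes e_y\otimes e_z\otimes e_w$, note that they all land on the same basis tensor, and compare the $\theta$-coefficients; your observation that each term becomes a reduced word for $\Delta_4$ is a tidy substitute for the paper's direct self-distributivity check.

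Carrying out your tabulation gives $\theta(x,y,z),\theta(x*z,y*z,w),\theta(x,z,w),\theta(y,z,w)$ on the left and $\theta(y,z,w),\theta(x*y,z,w),\theta(x,y,w),\theta(x*w,y*w,z*w)$ on the right. So your first expectation is what happens: the two $\theta(y,z,w)$ terms, coming from $R_{(3)}R_{(2)}R_{(1)}\psi_{(2)}$ and $\psi_{(1)}R_{(3)}R_{(2)}R_{(1)}$, cancel, leaving exactly the quandle $3$-cocycle condition. No normalization or degenerate-triple argument is needed.
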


\begin{proof} 
The 3-cocycle condition in quandle cohomology~\cite{CJKLS} for $\theta$ is written as 
\begin{eqnarray*}
\lefteqn{ 
\theta(x,y,z)+ \theta(x*z, y*z, w) + \theta(x, z, w) 
} \\
& = & 
\theta(x*y, z,w)+ \theta(x,y,w) + \theta(x*w,y*w,z*w) .
\end{eqnarray*}

 A direct computation shows that each summand in Equation~\eqref{eqn:tetra} maps a basis vector $e_x\otimes e_y \otimes e_z \otimes e_w$ to a multiple of $e_w \otimes e_{z*w} \otimes e_{(y*z)*w} \otimes e_{((x*y)*z)*w}$, after possibly applying the self-distributive property of $*$ multiple times. The scalar multiples are given by the quandle cocycle for each term in the left-hand-side of Equation~\eqref{eqn:tetra} as:
 \begin{align*}
 		 \theta(x,y,z),\ \theta(x*z,y*z,w),\ \theta(x,y,w),\ \theta(y,z,w), 
 \end{align*}
 while for the right-hand-side we obtain
  \begin{align*}
 \theta(y,z,w),\ \theta(x*y,z,w),\ \theta(x,y,w),\ \theta(x*w,y*w,z*w). 
 \end{align*}
 Equating each monomial, we obtain the quandle $3$-cocycle condition for $\theta$, which vanishes. 
\end{proof}

Let $(X, *)$ be a quandle and $V = \mathbb C [X]$  its quandle ring. 
As in Example~\ref{ex:22}, we decompose bases of $E(\vec B, 1)$ as follows.
From Lemma~\ref{lem:color}, we have $E(\vec B, 1)=\mathbb C [X^n]^H$, where 
$H=\langle f_1, \ldots, f_m\rangle \leq  \rho ( \mathbb B_n )$, 
 where $\rho : \mathbb B_n \rightarrow {\rm Aut} (V^{\otimes n} ) $ is the representation 
defined above  Lemma~\ref{lem:color}. 
As the basis of $\mathbb C [X^n]^H$, we take the following two types of invariant vectors:
\begin{itemize}
\item The monomials $e_{x_1} \otimes \cdots \otimes e_{x_n}$ that are left invariant under the action of $H$, called {\it  monomial invariant vectors} (MIV). 
%The set of imonomial nvariant  vectors are denoted by ${\rm MIV}$.
\item The nonmonomial orbits formed by actions of $H$ from a monomial, called {\it  nonmonomial invariant vectors} (NIV). 
Such a vector is described by $\sum_{h_j \neq 1}  h_j (e_{y_1} \otimes \cdots \otimes  e_{y_n})$, where 
each $e_{y_1} \otimes \cdots \otimes  e_{y_n}$ is not an invariant vector. 
%The set of  nonmonomial invariant vectors are denoted by ${\rm NIV}$.
\end{itemize}
%
%(End delete) }

\begin{example}\label{ex:2TST}
{\rm 
We consider the quantum 3-cocycle invariant defined through Lemma~\ref{lem:q3cocy}.
The YB 3-cocycle $\psi$ defined in the lemma from a quandle 3-cocycle satisfies the quandle condition, and by Example~\ref{ex:quandleunit}, Theorem~\ref{thm:3cocy} implies that a quantum 3-cocycle invariant is well defined
from this $\psi$. Here we take $\mathbb k=\mathbb F_3$, the field of three elements, whose additive group $\Z_3$ is identified with the coefficient group of quandle cohomology, so that $\theta$ in Lemma~\ref{lem:q3cocy} takes values in $\mathbb k$. 
We compare this invariant with the original quandle cocycle invariant for the following example. 
In \cite{CJKLS}, the quandle cocycle invariant $\Psi_X(\hat S)$ is computed for the 2-twist spun trefoil 
in  a closed braid form $\hat S$ with  two orientations 
 for $X={\sf R}_3$,  the dihedral quandle of order 3, and for a 3-cocycle
$\theta \in Z^3(X, \Z_3)$  specified in \cite{CJKLS}.
 We examine the invariant for one of the two orientations in Figure 19 of \cite{CJKLS}. 
%Let $\theta \in Z^3(X, \Z_3)$ be the quandle 3-cocycle defined by 
%the characteistic function 
%$$\theta = - \chi_{(0,1,0)} + \chi_{(0,2,0)} - \chi_{(0,2,1)} + \chi_{(1,0,1)} + \chi_{(1,0,2)} + \chi_{(2,0,2)} + \chi_{(2,1,2)} $$
%as  in \cite{CJKLS}. 
%In this case, the fundamental quandle has two generators, and any choice of elements of these generators give rise to a coloring of $\hat S$ with $X$. 
%Let $t$ be a multiplicative generator of the coefficient group $\Z_3$, where the cocycle takes values in multiplicative notation $t^k$, $k \in \Z_3 = \{ 0,1,2\}$. 
% with which the quandle cocycle invariant is  written in a polynomial form.  % Then the invariant value is $\Psi_X(\hat S)=3+6t$. 

The Hurwitz braid system of the surface braid is given by $\vec b = (b_1, \ldots, b_6)$, where
\begin{eqnarray*}
		b_1 &=& \sigma_3^{-1}\sigma_1 \sigma_2 \sigma_1^{-1}\sigma_3,\\
		b_2 &=& \sigma_3^{-1}\sigma_1^{-1}\sigma_2\sigma_1^{-1}\sigma_2^{-1}\sigma_1\sigma_3,\\
		b_3 &=& \sigma_3^{-1}\sigma_1^{-1}\sigma_2\sigma_3\sigma_2^{-1}\sigma_1\sigma_3,\\
		b_4 &=& \sigma_1^{-1}\sigma_2^2\sigma_3^{-1}\sigma_2^{-2}\sigma_1,\\
		b_5 &=& \sigma_1^{-1}\sigma_2^2\sigma_1\sigma_2^{-2}\sigma_1,\\
		b_6 &=& \sigma_2^{-1}.
\end{eqnarray*}
As discussed in Section~\ref{sec:quandle_col}, we have that the quandle $R_3$ induces a permutation representation of the braid group, which we indicate by $\rho$, and $R^\pm_i = \rho(\sigma_i^\pm)$. We will write $B_i = \rho(b_i)$, for $i=1,\ldots, 6$. 

MIVs correspond bijectively  to 
 colorings, and therefore each term in the sum of the cocycle invariant has a corresponding term in the image of $\Psi(S,\psi)$. However, as seen in Section~\ref{sec:quandle_col}, we know that $E(\vec B,1)$ generally contains a basis with more terms than MIVs. These are obtained considering the orbits of the action generated by the permutations associated to $\rho(b_1) = B_1, \ldots, \rho(b_6) = B_6$. A direct computation shows that $R_i^3 = \mathbb 1$, due to the coefficients being  taken modulo $3$. Then, we have
\begin{eqnarray*}
	B_1 B_2 
	&=& 	R_3^{-1}R_1R_2R_1^{-1}R_3R_3^{-1}R_1^{-1}R_2R_1^{-1}R_2^{-1}R_1R_3\\
	&=&		R_3^{-1}R_1R_2R_1R_2R_1^{-1}R_2^{-1}R_1R_3\\
	&=& 	R_3^{-1}R_1R_1R_2R_1R_1^{-1}R_2^{-1}R_1R_3\\
	&=&     R_3^{-1}R_1^3R_3  \\
	&=& 	1,
\end{eqnarray*}
which shows that $B_1 = B_2^{-1}$. Analogously, we have also that $B_3 = B_4^{-1}$ and $B_5 = B_6^{-1}$. Therefore, to determine the braid system eigenspace, following Lemma~\ref{lem:color}, we 
may only consider the orbits of the action generated by $B_1, B_3$ and $B_6$.  
A computer aided computation yields  
 the orbits as follows. Setting $E_{x,y,z,w} := e_x\otimes e_y\otimes e_z\otimes e_w$, we have:
\begin{eqnarray*}
	B_1 (E_{x,y,z,w}) &=&  E_{w, y+x-w, z+x-w, 2w-x}, \\
	B_3 (E_{x,y,z,w}) &=& E_{w+z-y, y, z, 2w-x+z-y},\\
	B_6 (E_{x,y,z,w}) &=& E_{x,2y-x,y,w}. 
\end{eqnarray*}  
Each application of $B_1, B_2, B_3$ fixes the differences $w-x$ and $z-y$. Therefore, the permutation action of $\langle B_1, B_3, B_6\rangle$ has orbits contained in sets of type $\{ E_{x,y,s+y,r+x}\ |\ x,y,r,s\in \Z_3  \}$. When $s = r = 0$, we have that each $E_{x,y,s+y,r+x}$ is fixed by the $B_i$'s, and therefore we obtain an MIV. There are $9$ such MIVs, as $x$ and $y$ vary in $\Z_3$, and they correspond to the quandle colorings as discussed above. A direct computation shows that for each $(r,s) \neq (0,0)$ we obtain different orbits, and therefore the braid system eigenspace $E_{(\vec B,1)}$ is a $17$-dimensional space, generated by $E_{x,y,s+y,r+x}$, where $8$ NIVs are given by $(r,s) \neq (0,0)$.
 
 Let $t$ be a multiplicative generator of the coefficient group $\Z_3$, where the cocycle takes values in multiplicative notation $t^k$, $k \in \Z_3 = \{ 0,1,2\}$.  
 The cocycle invariant is defined in a state sum form $\sum_{C_i} \prod_{T_j} \theta( x_j, y_j, z_j)^{\epsilon (T_j)}$,
 where $(x_j, y_j, z_j)$ corresponds to  the triple of quandle elements determined by 
  a coloring  $C_i$ at 
  a triple point $T_j$, and $\epsilon$ denotes the sign at $T_j$.

 In \cite{CJKLS}, Table 1 for Theorem 11.4 lists the value of 3-cocycle evaluated for each coloring and for each white vertex. There are 9 colorings, $C_i$, $i=1, \ldots, 9$, and 6 white vertices, $T_j$, $j=1, \ldots, 6$. These corresponds to the $9$ MIVs computed above, as expected from Lemma~\ref{lem:color}. We notice that the convention used in the present article realtive to the positioning of $*$ is different with respect to the one of \cite{CJKLS}. To compare the invariant of \cite{CJKLS} and $\Psi(S, \psi)$ directly, we need to adjust one of the conventions. 
 Therefore, we recompute the cocycle invariant of Theorem~11.4 of \cite{CJKLS} using the convention used in this article.

 Each $C_j$ corresponds to an MIV  $v_{ij}$ in $E(\vec B,1)$, where the pair $i,j$ 
 are pairs of integers parametrizing the colorings used 
   in the first column of Table~1 in \cite{CJKLS}. A direct computation applying $\psi$ in Lemma~\ref{lem:q3cocy}
 shows that the contribution for each such vector is 
given by 
$$\Psi(S, \psi) (v_{01}) = 2, \ \Psi(S, \psi) (v_{02}) = 1, \ \Psi(S, \psi) (v_{20}) = 1, \ \Psi(S, \psi) (v_{21}) = 2$$
 and $\Psi(S, \psi) (v_{ij}) = 0$ for the other choices of $i,j$. The relation between $\Psi(S,\psi)$ and the quandle cocycle invariant is that $\Psi(S, \psi) (v_{ij}) \in \Z_3$ is equal to the exponent of the contribution of the coloring $(ij)$ to the cocycle invariant.
  Namely,  the contribution $t^k$ in $\Phi_X(\hat S)$ corresponds to $k \in \Z_3$, the value of 
 $\Psi(S,\psi)$  for the corresponding color vector. Furthermore, the current invariant contains additional information coming from NIVs.  
  We notice that this relation mimicks the analogous result for the quantum 2-cocycle invariant found in \cite{SZ-QC}. Therefore, the same computation also gives each Boltzmann weight for the cocycle invariant of $S$ for MIVs. 
}
\end{example}

\begin{remark}
	{\rm 
			We point out that since $\Psi(S,\psi)$ is a functional which is shown to be invariant by Theorem~\ref{thm:3cocy}, a comparison of $\Psi(S_1,\psi)$ and $\Psi(S_2,\psi)$ for two surface braids is only meaningful when the domains are equal, i.e. $E(\vec B_1,1) = E(\vec B_2,1)$ where $\vec B_i$ is a braid system for $S_i$. However, by Theorem~\ref{thm:sfc_braid} we know that $E(\vec B_1,1) \neq E(\vec B_2,1)$ automatically implies that $S_1$ and $S_2$ are not equivalent. Therefore, $\Psi(S,\psi)$ can be used to distinguish surface braids whose eigenspace invariants fail to distinguish. 
	}
\end{remark}

\section{Towards invariants for knotted surfaces by closure}

It is known that two surface braids represent equivalent knotted surface under closure if they are related by a finite sequence of conjugation and stabilization~\cite{Kamada:SB}, in a manner analogous to the classical case.
In this section we present an approach towards defining invariants for knotted surfaces from the invariants defined for surface braids in preceding sections, by examining effects under conjugation and stabilization. 
%We assume throughout the section that the ring $\mathbb k$ is a PID, and that $R : V\otimes V \longrightarrow V\otimes V$ is a YBO where $V$ is free over $\mathbb k$.  In this case $V$ is flat, and Lemma~\ref{lem:supset} implies that $E(\vec B, \lambda)= E_{(\vec B', \lambda)}$, so that we assume this equality throughout this section and do not make distinction. 
We recall the following notions from \cite{Kamada:SB}. 
	\begin{definition}
		{\rm 
			Let $\vec b = (b_1, \ldots, b_m)$ be a braid system of degree $n$, i.e. where each braid $b_i \in \mathbb B_n$. Then, we say that $\vec b' = (b'_1, \ldots, b'_m)$ is:
			\begin{itemize}
				\item 
				Obtained from $\vec b$ via {\it conjugation} if $b'_i = b^{-1}b_ib$ for each $i=1,\ldots , m$ and some $b\in \mathbb B_n$;
				\item 
				Obtained from $\vec b$ via {\it stabilization} if $\vec b' = (b_1, \ldots, b_m, \sigma_n, \sigma_n^{-1})$. % If the roles of $\vec b$ and $\vec b'$ are exchanged, one
				One also  says that 
				%a {\it destabilization} has been performed. 
				$\vec{b}$ is obtained from $\vec b' $ by {\it destabilization}. 
			\end{itemize}
		}
	\end{definition}
	
The changes on charts that correspond to conjugation and stabilization are depicted in Figure~\ref{stab}.
The figure (A) is the original chart, (B) represents a chart after a conjugation and (C) represents stabilization.

\begin{figure}[htb]
\begin{center}
\includegraphics[width=4in]{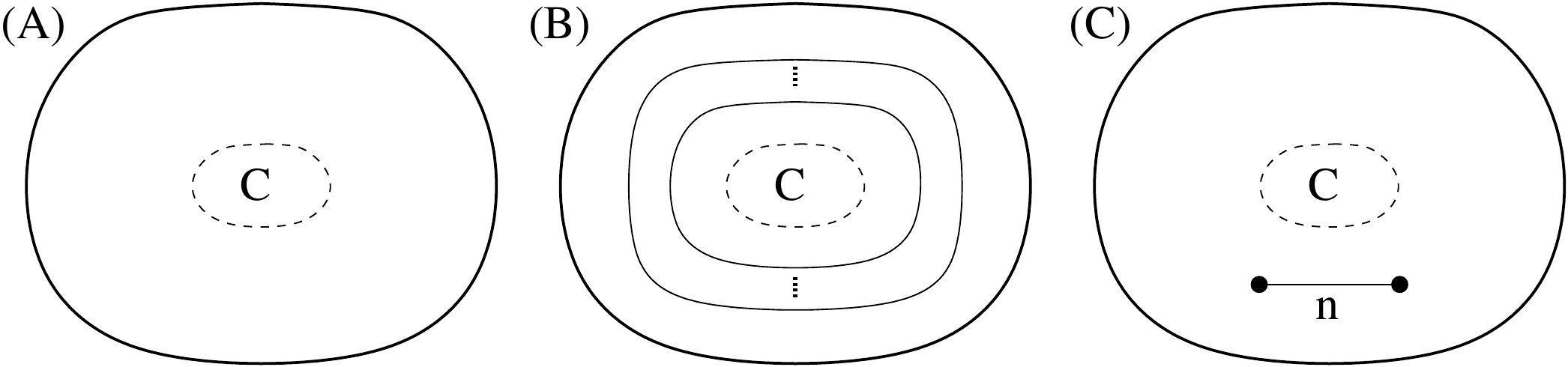}
\end{center}
\caption{Conjugation and stabilization of surface braids on charts}
\label{stab}
\end{figure}

	The Alexander theorem in dimension four guarantees that any knotted surface (in $4$-space) is
	 equivalent to 
	 the closure of a $2$-braid, represented via a braid system. 
	The closure is defined in a manner similar  to the classical case, as the operation of capping off the boundary of surface braids by trivial nested discs outside \cite{Kamada:SB}. 
	The Markov theorem in dimension four establishes when two surfaces represented by two braid systems are equivalent (ambient isotopic). Namely, this happens if and only if their braid systems are related by slide equivalences, conjugations and (de)stabilizations. 
	
	Given a braid system $b = (b_1, \ldots, b_m)$ of a surface braid $S$, and a YBO $R$ with an eigenvalue $\lambda$, we consider the braid system eigenspace $E(\vec B, \lambda)$ and  $E_{(\vec B', \lambda)}$. 
	%In the following, we will indicate the eigenspace of $\lambda =1$ simply by $E(\vec B)$. 

\begin{lemma}
	Let $S$ be a surface $n$-braid, and $R$, $\lambda$,   $E(\vec B, \lambda)$, $E_{(\vec B, \lambda)}$ be as above.
Suppose that $\vec b' = (b'_i)$ is obtained from $\vec b=(b_i)$ by a conjugation, $b_i'=b^{-1} b_i b$ for $i=1, \ldots, m$. Let $\vec{B}=\rho (\vec b)$  and $\vec{B'}=\rho (b')$.
Then %there exists an 
the isomorphism $Q=\rho(b) $ induced by a braid word $b \in \mathbb B_n$ 
satisfies that % such that 
$E(\vec B', \lambda)=Q^{-1} E(\vec B, \lambda)$ and $E_{(\vec B', \lambda)} =Q^{-1} E_{(\vec B, \lambda)}$.
\end{lemma}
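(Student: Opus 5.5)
The plan is to reduce everything to the elementary fact already used in the proof of Lemma~\ref{lem:conjeigen}: if $E$ is the $\lambda$-eigenspace of an operator $X$ and $P$ is invertible, then the $\lambda$-eigenspace of $P^{-1}XP$ is $P^{-1}(E)$. Set $Q=\rho(b)$, which is invertible with inverse $\rho(b^{-1})$. Since $\rho$ is a homomorphism, $B_i'=\rho(b^{-1}b_ib)=Q^{-1}B_iQ$ for every $i=1,\ldots,m$.

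For the first space, I would first show $E(B_i',\lambda)=Q^{-1}E(B_i,\lambda)$ for each $i$. If $B_iv=\lambda v$, then $B_i'(Q^{-1}v)=Q^{-1}B_iv=\lambda Q^{-1}v$. Conversely, if $B_i'u=\lambda u$, then $B_i(Qu)=QB_i'u=\lambda Qu$, so $u\in Q^{-1}E(B_i,\lambda)$. Because $Q^{-1}$ is a bijection of $V^{\otimes n}$, it commutes with intersections of subsets, so
$$E(\vec B',\lambda)=\bigcap_i Q^{-1}E(B_i,\lambda)=Q^{-1}\Big(\bigcap_i E(B_i,\lambda)\Big)=Q^{-1}E(\vec B,\lambda).$$

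For the second space, $E_{(\vec B,\lambda)}$ is defined through the chosen words, namely $\bigcap_i W_i^{-1}(E_{(k_i,\lambda)})$ with $b_i=w_i^{-1}\sigma_{k_i}^{\epsilon(i)}w_i$. I will write $b_i'=(w_ib)^{-1}\sigma_{k_i}^{\epsilon(i)}(w_ib)$, so the conjugating word becomes $w_i'=w_ib$ with $W_i'=W_iQ$. This is the presentation realized on charts by the conjugation in Figure~\ref{stab}(B), since the Hurwitz arcs are extended through the added conjugating edges. Then
$$W_i'^{-1}(E_{(k_i,\lambda)})=Q^{-1}W_i^{-1}(E_{(k_i,\lambda)}),$$
and intersecting over $i$ while again using bijectivity of $Q^{-1}$ gives $E_{(\vec B',\lambda)}=Q^{-1}E_{(\vec B,\lambda)}$.

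The only point needing care is this second statement, because $E_{(\vec B,\lambda)}$ depends a priori on the presentation of each $b_i$ and not only on the operator $B_i$. I would handle it by fixing the presentation $w_i'=w_ib$ as above. If a different presentation of $b_i'$ arises, for instance after slide moves, I would appeal to the invariance in Theorem~\ref{thm:sfc_braid} to pass to this one.
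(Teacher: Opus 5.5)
Your proposal is correct and follows the paper's proof. You write $B_i'=Q^{-1}B_iQ$, deduce $E(B_i',\lambda)=Q^{-1}E(B_i,\lambda)$, and pull $Q^{-1}$ out of the intersection using bijectivity. For $E_{(\vec B,\lambda)}$ the paper only says ``the other case is similar.'' Your choice $w_i'=w_ib$ (so $W_i'=W_iQ$), which is what the chart conjugation produces, is the intended reading. The closing appeal to Theorem~\ref{thm:sfc_braid} is unnecessary: slide moves change entries of the braid system rather than re-presenting a fixed entry, so you can drop that sentence.
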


\begin{proof}
Under the assumption we have  $\vec B' = (Q^{-1}B_1Q, \ldots,  Q^{-1} B_mQ)$. 
For each $i=1, \ldots, m$ we have $E(B'_i, \lambda) = Q^{-1} (E(B_i, \lambda))$. 
		Since $Q=\rho (b)$ is an isomorphism, it follows that $$\cap_{i=1}^nQ^{-1} (E(B_i, \lambda) = Q^{-1} (\cap_{i=1}^n(E(B_i, \lambda))), $$  from which it follows that $E(\vec B', \lambda) = Q^{-1} E(B, \lambda)$. 
		The other case is similar. 
		\end{proof}

\begin{corollary}
The rank, ${\rm rank} E(\vec B, \lambda)$, is an invariant under conjugation of surface braids.
\end{corollary}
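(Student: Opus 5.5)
The corollary follows directly from the preceding lemma, so the plan is to read off the rank from the isomorphism it supplies. Let $\vec b'$ be obtained from $\vec b$ by conjugation by $b\in\mathbb B_n$, so that $b_i' = b^{-1} b_i b$ for all $i$, and set $Q=\rho(b)$. The lemma gives
\[ E(\vec B',\lambda)=Q^{-1}E(\vec B,\lambda). \]
Since $\rho(b)$ is a product of the operators $R_{(j)}^{\pm1}$, each of which is an automorphism of $V^{\otimes n}$, the map $Q^{-1}$ is a $\mathbb k$-module automorphism of $V^{\otimes n}$. Its restriction to $E(\vec B,\lambda)$ is therefore an injective module map onto $Q^{-1}E(\vec B,\lambda)$. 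Hence the two submodules are isomorphic, and isomorphic modules have equal rank, so ${\rm rank}\,E(\vec B',\lambda)={\rm rank}\,E(\vec B,\lambda)$. The same argument applies verbatim to $E_{(\vec B,\lambda)}$, using the second equality in the lemma.

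Next, the rank must be independent of the choice of braid system within a slide equivalence class. Theorem~\ref{thm:sfc_braid} gives this directly: the submodule $E(\vec B,\lambda)$ itself is unchanged under slide equivalence, so its rank is unchanged as well.

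Conjugation-and-slide equivalence is generated by the two kinds of moves just treated. So for any finite sequence of conjugations and slide moves, the rank is preserved at each step, and by induction on the length of the sequence it is preserved overall.

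The only point needing care is the notion of rank over a general ring $\mathbb k$. I would take rank to mean, for instance, the dimension after tensoring with the fraction field when $\mathbb k$ is a domain, or the rank of a free module in the cases the paper treats. Every reasonable notion of rank is invariant under module isomorphism, and module isomorphism is all the argument above uses, so I expect no real obstacle here.
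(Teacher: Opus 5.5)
Your proposal is correct and follows the paper's route: the corollary is read off from the preceding lemma, since $Q^{-1}=\rho(b)^{-1}$ is an automorphism of $V^{\otimes n}$. It therefore carries $E(\vec B,\lambda)$ isomorphically onto $E(\vec B',\lambda)$, so the ranks agree. Your extra remarks on slide equivalence (via Theorem~\ref{thm:sfc_braid}) and on the isomorphism-invariance of rank are consistent with the paper and simply make explicit what it leaves implicit.
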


\begin{remark}
{\rm
If a given YBO $R$ preserves a specific structure, such as being an orthogonal matrix, 
then the subspaces related by compositions $\rho(b)$ of such matrices may have further obstructions than their ranks, and the rank alone may not characterize equivalence under conjugation.
}
\end{remark}

\begin{remark}\label{rem:stab}
	{\rm As for stabilization, we observe the following.
		The surface braid $S$ of index 3 in Example~\ref{ex:22} with braid system $\vec{b}=(\sigma_2, \sigma_2^{-1})$ is a stabilization of the trivial surface braid  of index 2. From Example~\ref{ex:22}, with the representation 
		$\rho$ using the quandle ring $\Z [{\sf R}_3]$ of the order 3 dihedral quandle ${\sf R}_3$, we have 
		${\rm rank}E(\vec B, 1) = 15$, where $\vec B=\rho (\vec b)$.
		Since the braid system of $S_0$ is empty, we have $\vec B=I$, the identity matrix of size 2, and
		${\rm rank}E(I, 1) = 9$. 
		Hence the rank of $E(\vec B, \lambda)$ is not invariant under stabilization.
	}
\end{remark}

%From the definitions we also have the following.

We assume throughout the rest of the section that the ring $\mathbb k$ is a PID, and that $R : V\otimes V \longrightarrow V\otimes V$ is a YBO where $V$ is free over $\mathbb k$.  In this case $V$ is flat, and Lemma~\ref{lem:supset} implies that $E(\vec B, \lambda)= E_{(\vec B', \lambda)}$, so that we assume this equality throughout this section and do not make distinction.

\begin{lemma}\label{lem:ses}
	Let $S$ be a surface $n$-braid, and $R$, $\lambda$,   $E(\vec B, \lambda)$ be as above.
Suppose that $\vec b' = (b'_i)$, $i=1, \ldots, m+2$, be obtained from $\vec b=(b_i)$, $i=1,\ldots, m$ by a stabilization, $b_i'= b_i$ for $i=1, \ldots, m$ and $b'_{i+1}=\sigma_n$, $b'_{i+2}=\sigma_n^{-1}$.
Let $\vec{B}=\rho (b)$  and $\vec{B'}=\rho (b')$. Then we have 
\begin{eqnarray*}
E(\vec B', \lambda) &=&  E(\vec B, \lambda) \otimes V \cap 
[V^{\otimes (n-1)}\otimes (\ker (\mathbb 1-R))].
\end{eqnarray*}
\end{lemma}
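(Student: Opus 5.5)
The proof is a direct computation of each eigenspace in the stabilized system, followed by an intersection; no moves are needed. First, set up the stabilized representation carefully. The braids $b_1,\ldots,b_m$ lie in $\mathbb B_n$, and in $\vec b'$ they are regarded in $\mathbb B_{n+1}$ via the standard inclusion, so they act on the first $n$ strands. Hence, writing $\rho'$ for the representation of $\mathbb B_{n+1}$ on $V^{\otimes (n+1)}$, we have $\rho'(b_i)=\rho(b_i)\otimes \mathbb 1 = B_i\otimes \mathbb 1$ for $i\le m$. The two new entries give $\rho'(\sigma_n)=R_{(n)}=\mathbb 1^{\otimes (n-1)}\otimes R$ and $\rho'(\sigma_n^{-1})=R_{(n)}^{-1}$. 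Consequently,
\[
E(\vec B',\lambda)=\Big(\bigcap_{i=1}^m E(B_i\otimes \mathbb 1,\lambda)\Big)\cap E(R_{(n)},\lambda)\cap E(R_{(n)}^{-1},\lambda^{-1}).
\]
The last two eigenspaces coincide, exactly as noted for $B_1,B_2$ in Proposition~\ref{prop:twistspun}. Since $V$ is flat, Lemma~\ref{lem:supset} identifies this common space with $V^{\otimes (n-1)}\otimes E_\lambda=V^{\otimes(n-1)}\otimes\ker(\lambda\mathbb 1-R)$. For $\lambda=1$ this is the factor $V^{\otimes (n-1)}\otimes\ker(\mathbb 1-R)$ in the statement. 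I read the statement with $\lambda=1$, and for general $\lambda$ I would simply replace $\ker(\mathbb 1 - R)$ by $E_\lambda$.

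Second, I identify $E(B_i\otimes\mathbb 1,\lambda)$ with $E(B_i,\lambda)\otimes V$. This is the argument of Lemma~\ref{lem:supset}: tensor the exact sequence $0\to E(B_i,\lambda)\to V^{\otimes n}\xrightarrow{B_i-\lambda}V^{\otimes n}$ on the right with the flat module $V$. The result is $\ker\big((B_i-\lambda)\otimes\mathbb 1\big)=E(B_i,\lambda)\otimes V$.

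Third, and this is the step I expect to need the most care, I show that intersection commutes with $-\otimes V$:
\[
\bigcap_{i=1}^m \big(E(B_i,\lambda)\otimes V\big)=\Big(\bigcap_{i=1}^m E(B_i,\lambda)\Big)\otimes V=E(\vec B,\lambda)\otimes V .
\]
Because $V$ is free over $\mathbb k$ with a basis $\{e_s\}$, we have $V^{\otimes n}\otimes V=\bigoplus_s V^{\otimes n}\otimes e_s$. Under this decomposition each $U\otimes V$, for $U\subset V^{\otimes n}$, corresponds to $\bigoplus_s U\otimes e_s$. Flatness ensures that $U\otimes V\to V^{\otimes n}\otimes V$ is injective, so this is a genuine submodule. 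An element $\sum_s x_s\otimes e_s$ then lies in every $E(B_i,\lambda)\otimes V$ if and only if each coordinate $x_s$ lies in every $E(B_i,\lambda)$, which gives the displayed equality. Alternatively, one can present the intersection as the kernel of $V^{\otimes n}\to\bigoplus_i V^{\otimes n}/E(B_i,\lambda)$ and use exactness of $-\otimes V$.

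Combining the three steps yields the claimed formula. I will also check the edge case of the empty braid system ($m=0$), where $E(\vec B,\lambda)$ is defined by convention. There the formula reduces to $V^{\otimes n}\otimes V\cap (V^{\otimes(n-1)}\otimes E_\lambda)$, consistent with the computation in Example~\ref{ex:22} for $\lambda=1$.
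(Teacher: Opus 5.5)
Your proposal is correct and follows the paper's proof. The paper also invokes the flatness argument of Lemma~\ref{lem:supset} to get $E(B_i\otimes\mathbb 1,\lambda)=E(B_i,\lambda)\otimes V$, commutes the intersection with $-\otimes V$, and identifies the two new entries with $V^{\otimes(n-1)}\otimes\ker(\mathbb 1-R)$; it merely asserts the intersection step that you justify via a basis of $V$ (or via exactness). Your remark that $\ker(\mathbb 1-R)$ should read $E_\lambda$ for general $\lambda$ repairs a slip the paper's proof shares, but in that general reading the case $m=0$, $\lambda\neq 1$ also needs the empty intersection to be $V^{\otimes n}$, since the paper's convention sets $E(\vec B,\lambda)=0$ there and the formula would then fail.
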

\begin{proof}
From the proof of Lemma~\ref{lem:supset},
we have 
$$E(\vec{B},\lambda)=\bigcap_{i=1}^{m}E(B_i\otimes \mathbb 1,\lambda)  = (\cap_{i=1}^{m}E(B_i,\lambda))\otimes V = E(\vec B,\lambda)\otimes V. $$
Since $$E(B_{m+1}, \lambda)=E(B_{m+2}, \lambda)=
E(R_{(n)}, \lambda)=
[V^{\otimes (n-1)}\otimes (\ker (\mathbb 1-R))],$$
we obtain
$$ E(\vec{B'} , \lambda) = \bigcap_{i=1}^{m+2}E(B_i\otimes \mathbb 1,\lambda) =E(\vec B,\lambda)\otimes V
\cap
[V^{\otimes (n-1)}\otimes (\ker (\mathbb 1-R))]$$
as desired.
\end{proof}

Towards constructing an invariant of knotted surfaces, we consider a restricted family of YBOs as follows. We let $V$ be freely generated by the finitely many elements $\{e_i\}_{i=1}^n$, and we assume  $\mathbb k$ to have zero characteristic. 
Observations in  Examples~\ref{ex:22} and Remark~\ref{rem:stab} lead to considering the following special type of YBOs to seek properties that remain invariant  under stabilization. The changes under stabilization caused in these examples are due to multiple tensor terms that appear in the eigenspaces. Thus we consider the following restrictions.

\begin{definition}
{\rm 
We say that a YBO  $R$ on $V$ is \emph{simple}, for some $k\in \N$, if there is a basis $\{e_i\}_{i=1}^n$ of $V$ such that $R$ maps simple tensors to simple tensors.
% and if for each generator $e_i$ there exist $k$ generators $e_{s(i,j)}$, $j=1, \ldots, k$, such that $R(e_i\otimes e_{s(i,j)}) = e_i\otimes e_{s(i,j)}$ for each $j=1,\ldots, k$. 

}
\end{definition}

In this case we have $R(e_i\otimes e_j) = R_{i,j}^{p,q}e_p\otimes e_q$, where 
there is a single term on the right-hand side and no sum involved.

\begin{definition}
{\rm
We say that a YBO  $R: V \otimes V \rightarrow V \otimes V $ is \emph{$k$-stabilizing}, for some $k\in \N$,
if for each basis vector $e_i$ there exist $k$ basis vectors  $e_{s(i,j)}$, $j=1, \ldots, k$, such that $R(e_i\otimes e_{s(i,j)}) = e_i\otimes e_{s(i,j)}$ for each $j=1,\ldots, k$. 

Similarly, a set-theoretic YBO (SYBO) $\bar R: X \times X \rightarrow X \times X$ is called  \emph{$k$-stabilizing} if 
for each $x \in X$,  there exist $k$ elements   $s(x,j)  \in X$ such that $\bar R(x, s(x,j)) = (x, s(x,j) ) $ for each $j=1,\ldots, k$. 

}
\end{definition}

\begin{definition}
	{\rm 
			Let $\bar R: X\times X\longrightarrow X\times X$
			be an SYBO  given by $\bar R(x,y) = (R_1(x,y),R_2(x,y))$. 
			A
			\emph{YB 2-cocycle} 
			 is a map $\sigma : X\times X \longrightarrow \mathbb k$ satisfying the following equation:
			\begin{eqnarray*}
& & 	\sigma(x,y)\sigma(R_2(x,y),z)\sigma(R_1(x,y),R_1(R_2(x,y),z)) 
		\\
		 &=& \sigma(y,z)\sigma(x,R_1(y,z))\sigma(R_2(x,R_1(y,z)),R_2(y,z)).  
			\end{eqnarray*}
	}
\end{definition}

\begin{remark}
{\rm 
The preceding condition  is  the 2-cocycle condition for SYBOs defined and studied in 
\cite{CES:SYBO} with values in the multiplicative group of $\mathbb k$ in multiplicative notation.  
In \cite{CES:SYBO}, deformations of SYBOs by YB 2-cocycles are defined. The YBO $ R$ described below in Lemma~\ref{lem:admissible} is the linearlization of such  deformations by YB 2-cocycles. 
}
\end{remark}

A direct verification yields the following. 

\begin{lemma}\label{lem:admissible}
			Let $X$ be a set 
						and let $V$ denote the free module over $\mathbb k$ generated by symbols $e_x$ for $x\in X$. Then, the YBO $ R : V\otimes V \rightarrow V\otimes V$ 
					 is simple and $k$-stabilizing  if and only if it is given by 
$$ R(e_x \otimes e_y) = \sigma(x,y)\cdot e_{R_1(x,y)}\otimes e_{R_2(x,y)}$$ for some  $k$-stabilizing SYBO $\bar R: X \times X \rightarrow X \times X $, $\bar R(x,y)=(R_1(x,y), R_2(x,y) ) $, and a YB $2$-cocycle $\sigma$.
					 
\end{lemma}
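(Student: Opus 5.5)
We prove both directions by writing $R$ in the fixed basis $\{e_x\}_{x\in X}$ and comparing coefficients; in both directions the work reduces to checking the Yang--Baxter equation on simple tensors $e_x\otimes e_y\otimes e_z$. Throughout we read ``simple'' as meaning that $R$ sends each basis tensor $e_x\otimes e_y$ to a nonzero scalar multiple of a basis tensor $e_p\otimes e_q$. We also take the scalars to be units of $\mathbb k$. This is automatic because $R$ is an automorphism: a simple $R$ permutes the lines spanned by basis tensors, since an automorphism cannot send two basis tensors into the same line unless their images are dependent, and invertibility then forces the scalars to be units.

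\emph{($\Rightarrow$)} Suppose $R$ is simple and $k$-stabilizing. For each pair $(x,y)$, define $\bar R(x,y)=(R_1(x,y),R_2(x,y))$ and $\sigma(x,y)\in\mathbb k^\times$ by the identity $R(e_x\otimes e_y)=\sigma(x,y)\,e_{R_1(x,y)}\otimes e_{R_2(x,y)}$. These are well defined because a nonzero multiple of a basis tensor determines both the indices and the scalar.

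Next evaluate the YBE $(R\otimes\mathbb 1)(\mathbb 1\otimes R)(R\otimes \mathbb 1)=(\mathbb 1\otimes R)(R\otimes\mathbb 1)(\mathbb 1\otimes R)$ on $e_x\otimes e_y\otimes e_z$. Each side is a single scalar times a single basis tensor. Since the scalars are units, both the indices and the scalars must agree separately:
\begin{itemize}
\item equality of indices is exactly the set-theoretic YBE for $\bar R$, so $\bar R$ is an SYBO;
\item equality of scalars is exactly the displayed YB 2-cocycle identity for $\sigma$.
\end{itemize}
The main bookkeeping obstacle is matching the order of the arguments in the three $\sigma$ factors of each composite with the definition as written. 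We do this by tracking which tensor positions each factor of $R$ acts on. For the left-hand composite, the first $R$ acts on positions $(1,2)$ and contributes $\sigma(x,y)$; the second acts on positions $(2,3)$ with entries $(R_2(x,y),z)$; the third acts on positions $(1,2)$ with entries $R_1(x,y)$ and $R_1(R_2(x,y),z)$. This reproduces the left side of the definition, and the right side is checked the same way.

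Finally, $R(e_x\otimes e_{s(x,j)})=e_x\otimes e_{s(x,j)}$ forces $\bar R(x,s(x,j))=(x,s(x,j))$ together with $\sigma(x,s(x,j))=1$. Hence $\bar R$ is a $k$-stabilizing SYBO.

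\emph{($\Leftarrow$)} Conversely, suppose $\bar R$ is a $k$-stabilizing SYBO and $\sigma$ is a YB 2-cocycle, and define $R$ by the formula in the statement. Then $R$ is simple by construction. The same coefficient computation run backwards shows that the YBE for $R$ holds, because the indices match by the SYBE for $\bar R$ and the scalars match by the cocycle identity.

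For $k$-stabilization, we need $R(e_x\otimes e_{s(x,j)})=e_x\otimes e_{s(x,j)}$, which requires $\sigma(x,s(x,j))=1$ at the fixed pairs. The 2-cocycle condition alone does not obviously force this. We therefore read the lemma with the normalization $\sigma=1$ on the pairs fixed by $\bar R$ taken as part of ``$k$-stabilizing'' for the deformed operator; this is the only point where care is needed. Invertibility of $R$ additionally requires $\bar R$ bijective and $\sigma$ unit-valued, which we assume, as in \cite{CES:SYBO}.
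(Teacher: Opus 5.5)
Your proof is the same coefficient comparison on basis tensors $e_x\otimes e_y\otimes e_z$ that the paper intends (it says only ``a direct verification yields the following''), and your forward direction is correct, including the unit argument and the matching of the $\sigma$-arguments with the displayed cocycle identity. Your caveat on the converse is not a gap in your argument but a genuine defect in the statement: for $X=\{x\}$, $\bar R=\mathrm{id}$ (which is $1$-stabilizing) and $\sigma\equiv c$ with $c\ne 1$ a unit, the cocycle identity reads $c^3=c^3$, yet $R=c\,\mathbb 1$ is a simple YBO fixing no basis tensor, so the normalization $\sigma(x,s(x,j))=1$ that you added is genuinely necessary.
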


\begin{definition}
	{\rm 
			Let $\hat S$ denote a knotted surface obtained as the closure of a 
			surface braid $S$ represented by the braid system $\vec b = (b_1, \ldots, b_m)$. Let $R$ be 
			as in Lemma~\ref{lem:admissible}. 
			 Let $E(\vec B,1)$ denote the braid system $1$-eigenspace associated to $\vec b$ and $\rho (R)$ for a braid group representation $\rho$. 
		Denote by 	$\Sigma(V^{\otimes n})$  the set of vectors of the form $e_{i_1}\otimes \cdots \otimes e_{i_n}$. 
Then, we define the \emph{simple rank} of $\hat S$ as 
			$$ {\rm rank}_{\Sigma} (\hat S) = \frac{1}{k^n} \sharp(\Sigma(V^{\otimes n})\cap E(\vec B,1)). $$
			%where $\Sigma(V^{\otimes n})$ denotes the set of vectors of the form $e_{i_1}\otimes \cdots \otimes e_{i_n}$. 
	}
\end{definition} 

\begin{theorem}
			The simple rank of $\hat S$ is an invariant of knotted surfaces. 
\end{theorem}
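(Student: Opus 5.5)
By the four-dimensional Markov theorem recalled above, two surface braids have equivalent closures exactly when their braid systems are related by slide equivalences, conjugations and (de)stabilizations. So it suffices to check that ${\rm rank}_{\Sigma}(\hat S)$ is unchanged under each of these three moves, with the braid index $n$ changing only under stabilization. That is why the normalization $k^{-n}$ appears.

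\emph{Slide equivalence.} Theorem~\ref{thm:sfc_braid} shows that $E(\vec B,1)$ itself is unchanged, so $\Sigma(V^{\otimes n})\cap E(\vec B,1)$ is unchanged as well.

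\emph{Conjugation.} The lemma on conjugation gives $E(\vec B',1)=Q^{-1}E(\vec B,1)$ with $Q=\rho(b)$. Since $R$ is simple, every $R_{(j)}^{\pm1}$ maps each simple tensor $e_{i_1}\otimes\cdots\otimes e_{i_n}$ to a scalar multiple of another simple tensor. This scalar is $\sigma(x,y)$, by Lemma~\ref{lem:admissible}, and it is invertible because $R$ is an automorphism. Hence $Q^{-1}$ permutes the lines spanned by simple tensors. I will read $\Sigma(V^{\otimes n})\cap E(\vec B,1)$ as the set of simple tensors whose span lies in $E(\vec B,1)$, i.e.\ as counted up to scalars. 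With that reading, $Q^{-1}$ induces a bijection between $\Sigma\cap E(\vec B,1)$ and $\Sigma\cap E(\vec B',1)$.

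\emph{Stabilization.} This is the main step. By Lemma~\ref{lem:ses}, $E(\vec B',1)=(E(\vec B,1)\otimes V)\cap\bigl(V^{\otimes(n-1)}\otimes\ker(\mathbb 1-R)\bigr)$, with the new space sitting in $V^{\otimes (n+1)}$ and the kernel placed at positions $n,n+1$. A simple tensor $e_{i_1}\otimes\cdots\otimes e_{i_{n+1}}$ lies in the first factor exactly when $e_{i_1}\otimes\cdots\otimes e_{i_n}\in E(\vec B,1)$. For the second factor I would check that $e_{i_1}\otimes\cdots\otimes e_{i_{n+1}}$ lies in $V^{\otimes(n-1)}\otimes\ker(\mathbb 1-R)$ exactly when $R(e_{i_n}\otimes e_{i_{n+1}})=e_{i_n}\otimes e_{i_{n+1}}$; this uses freeness of $V$, so that $V^{\otimes(n-1)}\otimes\ker(\mathbb 1-R)$ equals $\ker(\mathbb 1 - R_{(n)})$ by Lemma~\ref{lem:supset}. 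By the $k$-stabilizing hypothesis, each $i_n$ admits the $k$ values $i_{n+1}=s(i_n,j)$. So every simple tensor in $E(\vec B,1)$ has at least $k$ extensions, and the count should multiply by exactly $k$, giving $\frac{1}{k^{n+1}}\cdot k\cdot\sharp=\frac{1}{k^n}\sharp$.

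The obstacle is the word ``exactly'': the definition only asks that at least $k$ such $s(i,j)$ exist. I would first try to read the definition as saying that $\{y : R(e_x\otimes e_y)=e_x\otimes e_y\}$ has exactly $k$ elements for each $x$, presumably the intended meaning. Under that reading the count is exactly $k$ times, and if needed I would state this as a clarified hypothesis. A second worry is the normalization: the trivial surface braid of index $n$ has $\lvert X\rvert^n$ simple invariant vectors rather than $k^n$. This does not affect invariance under the moves above, so the theorem follows once the three moves are checked.
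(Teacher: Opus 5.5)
Your proposal is correct and follows the paper's proof essentially step for step. It uses slide invariance from Theorem~\ref{thm:sfc_braid}, the stabilization count via Lemma~\ref{lem:ses} and the $k$-stabilizing hypothesis, and for conjugation the fact that $\rho(\beta^{\pm1})$ sends basis simple tensors to unit multiples of basis simple tensors (your ``up to scalars'' reading is harmless, since a unit multiple of $e_y$ lies in the submodule $E(\vec B',1)$ iff $e_y$ does). Your insistence on exactly $k$ solutions $y$ of $R(e_x\otimes e_y)=e_x\otimes e_y$ is what the paper tacitly uses when it writes ``$x_{n+1}=s(x_n,j)$ for some $j$'', and it is genuinely needed: take $X=\{a,b,c\}$, $\bar R$ the identity on $\{a,b\}^2$ and the flip elsewhere, trivial $\sigma$, and $k=1$. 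Then the braid systems $(\sigma_1,\sigma_1^{-1})$ and $(\sigma_1,\sigma_1^{-1},\sigma_2,\sigma_2^{-1})$ both close to an unknotted sphere but have $5$ and $9$ invariant simple tensors respectively.
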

\begin{proof}
			To show that $\rank_\Sigma(\hat S)$ is a well defined invariant of knotted surfaces, it is sufficient to show its invariance under slide equivalence, conjugation and (de)stabilization with respect to the braid system.  By Lemma~\ref{lem:admissible}, $R$ is given by a set-theoretic YBO $\bar R$,  and a YB $2$-cocycle $\sigma$ satisfying the stated properties. 
			
			Suppose that $\vec b' = (b_1, \ldots, b_ib_{i+1} b_i^{-1}, b_i, \ldots, b_m)$ is obtained from $\vec b$ by applying a slide move. As shown in Theorem~\ref{thm:sfc_braid}, we have the equality $E(\vec B',1) = E(\vec B,1)$, and therefore in particular the equality $\Sigma(V^{\otimes n})\cap E(\vec B',1) = \Sigma(V^{\otimes n})\cap E(\vec B,1)$ holds. 
			
			Let us consider the case where $\vec b'$ is obtained from $\vec b$ by a stabilization, which in turn means that $\vec b$ is obtained from $\vec b'$ by a destabilization. By Lemma~\ref{lem:ses}, we have that 
			$$E(\vec B',1) = [E(\vec B, 1)\otimes V] \cap [V^{\otimes (n-1)}\otimes (\ker (\mathbb 1-R))].$$
			Let $e_{x_1}\otimes \cdots \otimes e_{x_n}$ denote an element of $\Sigma(V^{\otimes n})\cap E(\vec b,1)$. Since $R$ is $k$-stabilizing, there exist $s(x_n,j)\in X$, for $j=1, \ldots, k$, such that $e_{x_1}\otimes \cdots \otimes  e_{x_n}\otimes e_{s(x_n,j)}$ is in  $\Sigma(V^{\otimes {n+1}})\cap E(\vec B',1)$. Conversely, if $e_{x_1}\otimes \cdots \otimes  e_{x_n}\otimes e_{x_{n+1}}$ is an element of $\Sigma(V^{\otimes {n+1}})\cap E(\vec B,1)$, from the same equality  
			 it follows that $e_{x_n}\otimes e_{x_{n+1}}$ is a solution to the equation $R(e_{x_n}\otimes e_y) = e_{x_n}\otimes e_y$ which gives $x_{n+1} = s(x_n,j)$ for some $j$, since $R$ is $k$-stabilizing. Therefore the number of elements in $\Sigma(V^{\otimes n})\cap E(\vec B,1)$ and
			 that in  $\Sigma(V^{\otimes {n+1}})\cap E(\vec B',1)$ are proportional by $k$,
              $$
              \sharp \Sigma(V^{\otimes {n+1}})\cap E(\vec B',1)
             = k \ \sharp \Sigma(V^{\otimes {n+1}})\cap E(\vec B,1).
             $$
             Since $\rank_\Sigma(\hat S)$ is normalized by $k^n$ and $n$ increases by $1$, it follows that the simple rank is invariant:
        	$${\rm rank}_\Sigma (\hat S)=
             \frac{1}{k^{n+1}}\sharp (\Sigma(V^{\otimes {n+1}})\cap E(\vec B',1))=\frac{1}{k^{n+1}}\sharp (\Sigma(V^{\otimes {n}})\cap E(\vec B,1))
             ={\rm rank}_\Sigma ( S)
             $$
             as desired.
			
			Let $\vec b'$ be obtained from $\vec b$ by a conjugation as $\vec b' = (\beta^{-1} b_1\beta,\ldots, \beta^{-1} b_n\beta)$, where $\beta$ is an $m$-braid. Then, applying Lemma~\ref{lem:conjeigen} we have $E(\vec b',1) = \rho(\beta^{-1})(E(\vec b,1))$. 
            
            Since $R$ maps simple tensors to simple tensors, so does $\rho (\beta)$. In particular, we have 
            $$\rho (\beta^{-1})(e_{x_1}\otimes \cdots \otimes e_{x_n}) = \alpha(x_1,\ldots, x_n) e_{y_1}\otimes \cdots \otimes e_{y_n}$$ for some nonzero $\alpha(x_1,\ldots, x_n)\in \mathbb k$ determined by the YB 2-cocycle %YB-twist 
			$\sigma$, and some $y_1, \ldots, y_n\in X$. Therefore we have a $1-1$ correspondence between  elements of $\Sigma(V^{\otimes n})\cap E(\vec B,1)$ and elements of $\Sigma(V^{\otimes n})\cap E(\vec B',1)$. This completes the proof. 
\end{proof}

\begin{remark} 
{\rm 
The definition of the simple rank mimics the coloring invariant of discrete structures such as biquandles and their variants. This attempt is to resolve the difference that occurs under stabilization to the number of simple tensors. Potential invariants beyond discrete cases may be expected, for example, from the construction of YBOs from linearlized ones by YB $2$-cocycles. 
Computations of Yang-Baxter $2$-cocycles, however,  have not been fully developed, and are desirable. 
}
\end{remark}

In this paper we defined two types of invariants for surface braids. Charts that describe surface braids are planar graphs, with two types of vertices corresponding to singular points of projections; black vertices representing branch points and white vertices for triple points. 
Black vertices give rise to monodromies called braid systems. 
From a braid system  that describes a given surface braid, we associated the intersection of eigenspaces of a representation of the braid words obtained from a YBO. 
It was shown that the invariant distinguishes infinitely many surface braids. 
For white vertices representing triple points, we used YB 3-cocycles from the Yang-Baxter cohomology theory, in an analogy with quandle 3-cocycle invariants, 
 to construct a quantum 3-cocycle invariant. Unique color vectors of source regions of white vertices are used for a given 1-eigenvector at a base points, and evaluated by a given YB 3-cocycle. 
 For the eigenspace invariant, we provided an approach towards constructing knotted surface invariants by closure, investigating effects under conjugations and stabilization. Similar analyses for invariance under conjugation and stabilization for the quantum 3-cocycle invariant  remain elusive. 
Further computations, properties, applications and relations to other invariants would be desirable.

   \bigskip        	
%

%\noindent
%{\bf Acknowledgements.} 

\noindent
{\bf AI use declaration}
During the preparation of this work, the authors used some AI tools to assist in literature discovery and background concept verification. No text, figures, or mathematical derivations in the final manuscript were generated by AI. The human authors reviewed and verified all information and retain sole responsibility for the accuracy and integrity of the work.

\end{document}